\definecolor{shadecolor}{gray}{0.875}
\def\tmp#1#2#3{%
	\definecolor{Hy#1color}{#2}{#3}%
	\hypersetup{#1color=Hy#1color}}
\def\tmp#1#2{%
	\colorlet{Hy#1bordercolor}{Hy#1color#2}%
	\hypersetup{#1bordercolor=Hy#1bordercolor}}
\tikzset{sgplattice/.style={inner sep=1pt,norm/.style={red!50!blue},char/.style={blue!50!black},
		lin/.style={black!50}},cnj/.style={black!50,yshift=-2.5pt,left=-1pt of #1,scale=0.5,fill=white}}
\let\oldtocsection=\tocsection
\let\oldtocsubsection=\tocsubsection
\let\oldtocsubsubsection=\tocsubsubsection
\renewcommand{\tocsection}[2]{\hspace{0em}\oldtocsection{#1}{#2}}
\renewcommand{\tocsubsection}[2]{\hspace{1em}\oldtocsubsection{#1}{#2}}
\renewcommand{\tocsubsubsection}[2]{\hspace{2em}\oldtocsubsubsection{#1}{#2}}
\newcommand{\Rmnum}[1]{\expandafter\@slowromancap\romannumeral #1@}
\numberwithin{equation}{section}
\newcommand{\ie}{\textit{i}.\textit{e}.}
\newcommand{\resp}{\textit{resp}.}
\theoremstyle{plain}
\newtheorem{prop}{Proposition}[section]
\newtheorem{theo}[prop]{Theorem}
\newtheorem{coro}[prop]{Corollary}
\newtheorem{lemm}[prop]{Lemma}
\theoremstyle{definition}
\newtheorem{defi}[prop]{Definition}
\newtheorem{rema}[prop]{Remark}
\newtheorem{exam}[prop]{Example}
\newtheorem{nota}[prop]{Notation}
\newlist{steps}{enumerate}{1}
\setlist[steps, 1]{label = Step \arabic*:}
\def\ra{\rightarrow}
\def\cK{{\mathcal K}}
\def\cO{{\mathcal O}}
\def\tS{{\widetilde{S}}}
\def\tT{{\widetilde{T}}}
\def\rk{{\mathrm{rk}}}
\def\bC{{\mathbb C}}
\def\bF{{\mathbb F}}
\def\bG{{\mathbb G}}
\def\bP{{\mathbb P}}
\def\bQ{{\mathbb Q}}
\def\bR{{\mathbb R}}
\def\bZ{{\mathbb Z}}
\def\fS{{\mathfrak S}}
\def\fA{{\mathfrak A}}
\def\fD{{\mathfrak D}}
\DeclareMathOperator{\bfA}{\boldsymbol{\mathrm{A}}}
\DeclareMathOperator{\bfD}{\boldsymbol{\mathrm{D}}}
\DeclareMathOperator{\bfE}{\boldsymbol{\mathrm{E}}}
\def\Br{\mathrm{Br}}
\def\Bl{\mathrm{Bl}}
\DeclareMathOperator{\Eff}{\overline{Eff}}
\def\et{\mathrm{\acute{e}t}}
\def\Nef{\mathrm{Nef}}
\def\Hom{\mathrm{Hom}}
\DeclareMathOperator{\Pic}{Pic}
\DeclareMathOperator{\Sing}{Sing}
\DeclareMathOperator{\Gal}{Gal}
\def\Aut{\mathrm{Aut}}
\def\Proj{\mathrm{Proj}}
\DeclareMathOperator{\Type}{Type}
\DeclareMathOperator{\Cris}{Cris}
\DeclareMathOperator{\discrep}{discrep}
\DeclareMathOperator{\tors}{{tors}}
\DeclareMathOperator{\sm}{{sm}}
\DeclareMathOperator{\red}{{red}}
\DeclareMathOperator{\cont}{{cont}}
\def\piet{\pi_1^{\text{\'et}}}
\author{Runxuan Gao}
\address{Graduate School of Mathematics, Nagoya University, Furocho Chikusa-ku, Nagoya, 464-8602, Japan}
\email{m20015x@math.nagoya-u.ac.jp}
\title[Examples]{Quasi-\'etale covers of Du Val del Pezzo surfaces and Zariski dense exceptional sets in Manin's conjecture}
\begin{document}

\date{\today}
\begin{abstract}
	We construct first examples of singular del Pezzo surfaces with Zariski dense exceptional sets in Manin's conjecture, varying in degrees $1, 2$ and $3$.
    The obstructions arise from accumulating quasi-\'etale covers.
    We classify all quasi-\'etale covers of Du Val del Pezzo surfaces, extending earlier works of Miyanishi-Zhang.
    Then, we identify all potential examples by studying group actions on the pseudo-effective cones, and show that no such example exists in degree more than $3$.
    Relevant results on the geometry and descent problem of quasi-\'etale covers are also established, providing a systematic method to construct other examples.
\end{abstract}
\maketitle
\setcounter{tocdepth}{2}
\tableofcontents

\section{Introduction}

Manin's conjecture studies distribution of rational points on rationally connected varieties.
Let $X$ be a weak Fano variety over a number field $F$.
Let $H:X(F)\ra \bR$ be an anticanonical height function on the set of rational points of $X$.
We define the counting function
$$N(U,H,B)\colonequals\#\{ x\in U(F)\mid H(x)\leq B\}$$
for any $B\geq0$ and subset $U\subseteq X(F)$.
%
%Let $X$ be a geometrically rationally connected variety over a number field $F$. 
%A well-known conjecture of Colliot-Th\'el\`ene implies that the set $X(F)$ of rational points is Zariski dense in $X$ as soon as there exists a rational point on the smooth locus of $X$.
%So it makes sense to study the distribution of them with respect to some height.
%Let $\cL$ be an adelically metrized line bundle whose underlying line bundle $L$ is big and nef.
%There is a unique height function $H_\cL:X(F)\ra\bR$ associated to $\cL$.
%We define the counting function
%$$N(U,\cL,B):=\#\{ x\in U(F)\mid H_\cL(x)\leq B\}$$
%for any $B\geq0$ and any subset $U\subseteq X$.

The closed-set version of Manin's conjecture, first formulated in \cite{FMT89,BM90}, predicts that there exists a proper closed subset $Z$ of $X$ such that
\begin{equation}\label{eq-manin-conjecture}
	N(X\backslash Z,H,B)\sim c(X,H) \cdot B (\log B)^{\rho(X)-1},
\end{equation}
where $\rho(X)$ is the Picard rank of $X$ and $c(X,H)$ is the
Peyre's constant introduced in \cite{Peyre95,BT98}.
%We say \textit{Manin's conjecture} holds for $(X\backslash Z,H)$ when (\ref{eq-manin-conjecture}) holds.

The closed-set version of Manin's conjecture is not true in general.
The first such examples were proposed by Batyrev and Tschinkel in \cite{batyrev1996rational} (also see \cite{gao2024geometric}).
%: they construct for each $n\geq3$ a Fano variety $X$ of dimension $n$, such that
%$$N(U,H,B)\gg B(\log B)^{4-1}$$
%for any Zariski open subset $U$, whereas $\rho(X)=2$.
%See \cite{gao2024geometric} for a study of Batyrev-Tschinkel's example in the spirit of \cite{LST}.
To overcome such counterexamples, Peyre suggested in \cite{Peyre03} that the exceptional set $Z$ should be assumed to be a thin set (see Definition \ref{defi-thin set}).
Nowadays, there are increasing evidences of the thin-set version of Manin's conjecture, such as the established cases \cite{LR19, BHB20},
the geometry consistency of the conjecture \cite{LTDuke,LST},
and its compatibility with Manin's conjectures for stacks \cite{DY22,ESZ23}.

On the other hand, no counterexample in dimension $2$ was found for a long time, despite the many established cases, such as \cite{BT98toric,delaBretche2002,de2012manin}.
%Manin's conjecture has been widely studied in dimension $2$, especially for Du Val del Pezzo surfaces (\ie~surfaces that have an ample anticanonical divisor and admit only canonical singularities).
%In the later case,
%the conjecture reads as follows.
%\begin{conj}\label{conj:maninfordvdp}
%    Let $X$ be a Du Val del Pezzo surface over a number field $F$ and let $\cL=-\cK_X$ be the anticanonical line bundle of $X$ with an adelic metric.
%    Suppose that $X(F)$ is not a thin set.
%    Then there exists a thin subset $Z$ of $X(F)$ such that
%    $$N(X\backslash Z,\cL,B)\sim c \cdot B (\log B)^{\rho(\tX)-1},$$
%where $\tX\ra X$ is the minimal resolution of $X$.
%\end{conj}
%For instance, the conjecture has been established for all toric weak del Pezzo surfaces and for many special weak del Pezzo surfaces.
%The only established case of degree $\leq2$ is a Du Val del Pezzo surface of degree $2$ with a $E_7$ singularity \cite{BB13}.
See \cite{Derenthal2013} for a list of established cases in dimension $2$.
There are also new cases established after the publication of \cite{Derenthal2013}, such as \cite{manincase16,manincase17,manincase19,derenthal2020split}.

In all established cases in dimension $2$, the exceptional set is a proper closed subset: it is nothing but the union of negative curves.
In \cite{gao2023zariski}, the author construct the first counterexample to the closed-set version of Manin's conjecture in dimension $2$.
%However, there do exist smooth del Pezzo surfaces of degree $1$ for which the closed-set version of Manin's conjecture does not hold , where the $c$-constant will be violated if one merely removes a proper closed exceptional set.
It is a minimal del Pezzo surface of degree $1$, and the Zariski dense exceptional set comes from a family of singular conics parameterized by an elliptic curve.
% which can not produce a counterexample for a singular del Pezzo surface, as such a surface has $b$-invariant $\geq2$.

In this paper, we thoroughly study another class of surfaces with Zariski dense exceptional sets, arising from quasi-étale covers (i.e., étale in codimension $1$ covers; see Section \ref{sec:quasi etale}).
Similar phenomena in higher dimensions were investigated by Le Rudulier \cite{le2019points}.
Surprisingly, this also occurs frequently for weak del Pezzo surfaces.

Our starting point is the following theorem, where (1) builds on \cite{BL17}, and (2) is elementary.
\begin{theo}\label{theo-guiding-principle}
	Let $\pi:T\ra S$ be a quasi-\'etale cover between Du Val del Pezzo surfaces over a number field $F$.
	Suppose that $T(F)\neq \emptyset$.
	Let $\rho_S:X\ra S$ and $\rho_T:Y\ra T$ be minimal resolutions of singularities.
		\begin{enumerate}
		\item  Suppose that $\rho(Y)=\rho(X)$ and that $Y(F)\neq\emptyset$ is equidistributed in some dense open subset of $Y$.
		Then $X(F)$ is not equidistributed in any dense open subset of $X$.
		In particular, for any open subset $U$ of $S$, Manin's conjecture for $(U,H)$ fails for some anticanonical height $H$. \label{theo-guiding-principle(1)}
		
		\item  Let $H$ be an anticanonical height on $X$.
		Suppose that $\rho(Y)>\rho(X)$, and that
		\begin{equation}\label{eq2}
			N(V,\pi^\ast H,B)\gg B(\log B)^{\rho(Y)-1},
		\end{equation}
		for a dense open subset $V$ of $T$.
		Then we have
		$$N(U,H,B)\gg B(\log B)^{\rho(X)-1},$$ \label{theo-guiding-principle(2)}
		for any dense open subset $U$ of $S$.
	\end{enumerate}
\end{theo}

One of the key points in the above theorem is a quasi-\'etale cover with equal or higher $b$-invariant.
One would wonder if such cover $\pi$ exists. Indeed, when $S$ is split (\ie\,Galois action on $\Pic(S)$ is trivial), there is no such cover for trivial reasons.
However, when $S$ is non-split, such a cover does exist.
Thanks to the proven cases of Manin’s conjecture \cite{BT98toric,chambert2010integral,browning2006density}, we construct explicit examples satisfying the full hypotheses of Theorem \ref{theo-guiding-principle}, leading to counterexamples to the proper-closed version of Manin’s conjecture.
\begin{theo}\label{theo-examples}
	The following assertions hold true.
	\begin{itemize}
		\item 	Let $S$ be either
		\begin{enumerate}
			\item (degree $3$, type $4A_1$) the singular cubic surface in $\bP^3_\bQ$ given by
			$$X^3 + 2XYW + XZ^2 -Y^2Z + ZW^2=0,$$\label{enum:1a}
			\item (degree $2$, type $D_4+3A_1$) the hypersurface in $\bP_\bQ(1,1,1,2)$ given by
			$$W^2-XY(Z^2+Y^2)=0.$$\label{enum:1b}
		\end{enumerate}
		Then for any proper closed exceptional set, there exists an anticanonical height $H$ on $S$ such that Manin's conjecture for $S$ does not hold.
		\item Let $S$ be either
		\begin{enumerate}[resume]
			\item (degree $2$, type $3A_2$) the hypersurface in $\bP_\bQ(1,1,1,2)$ given by
			$$W^2+(3X^2-YZ)W+9X^4-6X^2YZ+X(Y^3+Z^3)=0,$$\label{enum:2a}
			\item (degree $1$, type $E_6+A_2$) the hypersurface in $\bP_\bQ(1,1,2,3)$ given by
			$$W^2+Z^3+X^4Y^2=0.$$\label{enum:2b}
		\end{enumerate}
		Then for any dense open subset $U$ and any anticanonical height $H$ on $X$,
		$$N(U,H,B)\gg B(\log B)^{\rho(X)-1}.$$
	\end{itemize}
\end{theo}
In particular, (\ref{enum:2a}) or (\ref{enum:2b}) of Theorem \ref{theo-examples} provide the first examples in dimension $2$ where Manin's conjecture fails for \textit{all} anticanonical height function.
If Manin’s conjecture holds for these examples, it follows that there exists accumulating \textit{covers}, rather than only accumulating subvarieties, on these surfaces; see Section \ref{sec:examples}.

%In particular, (\ref{enum:2a}) or (\ref{enum:2b}) of Theorem \ref{theo-examples} provide the first counterexamples to equidistribution (over a Zariski open subset) in the sense of \cite{Peyre95} in dimension $2$.
%Let $S$ be a weak del Pezzo surface over $\bQ$ and let $\tau$ be the Tamagawa measure associated to an anticanonical metric.
%Recall that $S(\bQ)$ is \textit{equidistributed} on a subset $U\neq\emptyset$ of $S(\bfA_\bQ)$ if for any open subset $W$ of $S(\bfA_\bQ)$ with $\tau(\partial S)=0$, we have
%$$\lim_{B\ra \infty}\frac{N(U\cap W,H,B)}{N(U,H,B)}=\frac{\tau(W\cap\tau(X(\bfA_\bQ)^{\Br}) )}{\tau(X(\bfA_\bQ)^{\Br})}.$$
%
%\begin{coro}\label{coro:equidistribution}
%	Let $S$ be either (\ref{enum:2a}) or (\ref{enum:2b}) in Theorem \ref{theo-examples}.
%	Then rational points are not equidistributed on any Zariski open subset of $S$ with respect to any anticanonical height.
%\end{coro}
%\begin{proof}
%	Because Manin's conjecture for all anticanonical height is equivalent to equidistribution \cite[Proposition 5.0.1]{Peyre95}.
%\end{proof}

To systematically study such examples, we classify quasi-\'etale covers of Du Val del Pezzo surfaces up to singularity types, which do not seem to exist in the existing literature.
\begin{theo}[Theorem \ref{theo:classification}]\label{theo-classification-of-covers}
	Let $S$ be a Du Val del Pezzo surface over an algebraically closed field of characteristic $0$.
	Then, isomorphism classes of quasi-\'etale covers of $S$ are in one-to-one correspondence with the paths that terminate at $\Type(S)$ in the diagrams after Theorem \ref{theo:classification}.
\end{theo}
We observe that to classify such covers is equivalent to classify non-primitive root sublattices of del Pezzo lattices (Corollary \ref{coro-primitive-sublattice}).
Theorem \ref{theo-classification-of-covers} completes the study of adjoint rigid $a$-covers (see Definition \ref{defi:adjoint rigid a cover}) in dimension $2$, which would be useful in both Manin's conjecture and Geometric Manin's conjecture.
%When $S$ is a Du Val del Pezzo surface over an arbitrary field $F$ of characteristic $0$,
%additional information is required to determine whether quasi-\'etale covers of  $S_{\overline{F}}$ descend to $S$. This will be discussed in Section ???.

For weak del Pezzo surfaces, we characterize when the ``geometric consistency" fails on the complement of a proper closed subset.
The case of non-adjoint-rigid $a$-covers was studied in \cite{LT19}, and a counterexample was constructed in \cite{gao2023zariski}.
For the adjoint rigid case,
the following theorem gives a characterization.

\begin{theo}\label{theo-geometric-consistency-fails}
	Let $X$ be a weak del Pezzo surface of degree $d$ over a field $F$ of characteristic $0$ and write $L=-K_X$.
	Then there exists an adjoint rigid $a$-cover $f:Y\ra X$ with
	\begin{equation}\label{eq-ab}
			b(Y,f^\ast L)\geq b(X,L)\quad (\resp~b(Y,f^\ast L)> b(X,L))
	\end{equation}
	if and only if there exists a quasi-\'etale cover $\overline{\pi}:\overline{Y}\ra X_{\overline{F}}$ and a subgroup $H\times G_X\in\Cris(\overline{\pi})$ satisfying $\rho(Y,H)\geq \rho(G_X)$ $(\resp~\rho(Y,H)> \rho(G_X))$.
	When $d\geq2$, the pairs $(\Type(X),G_X)$ satisfying the equivalent condition are recorded in the tables in the Appendix.
	In particular, such $f$ does not exist when $d\geq4$ (\resp~$d\geq3$).
\end{theo}

Building on the version of minimal model program developed in \cite{LTDuke,LST}, this completes the study of the same problem for all geometrically rational surfaces.
\begin{theo}\label{theo-geometrically-rational}
	Let $X$ be a smooth geometrically rational surface over a field $F$ of characteristic $0$ and let $L$ be a big and nef line bundle on $X$.
	Then any adjoint rigid $a$-cover $f:Y\ra X$ of $(X,L)$ satisfying (\ref{eq-ab})
	is birationally equivalent to a quasi-\'etale cover $\pi:T\ra S$ of a Du Val del Pezzo surface $S$ with polarization $-K_S$ satisfying (\ref{eq-ab}).
\end{theo}

We introduce a model to study quasi-\'etale covers of normal surfaces (Proposition \ref{prop:model}).
In Theorem \ref{theo-guiding-principle}, the Picard rank of $X$ and $Y$ are determined by the Galois action on the negative curves on them.
A natural question is whether the image or preimage of a negative curve remains a negative curve.
For Du Val del Pezzo surfaces, we give a complete answer to this question (see Definition \ref{defi-star-line}).
\begin{theo}[$\subset$ Theorem \ref{theo-corr}]\label{theo-sub-of-theo-corr}
	Let $S$ be a Du Val del Pezzo surface over $\bC$ and let $\pi:T\ra S$ be a quasi-\'etale cover of prime degree $p$.
	\begin{enumerate}
		\item $\pi:T\ra S$ maps a line $C$ to a line, except when $C$ is contained in a $p$-circle of lines: in this case, $C$ is mapped to a nodal rational curve in $\lvert -K_S\rvert$.
		\item The preimage of a line under $\pi:T\ra S$ is a disjoint union of $p$ lines, a $3$-star of lines, or a curve with non-negative self-intersection.
	\end{enumerate}
\end{theo}
As a corollary, the induced map $Y\dasharrow X$ sends an extremal ray of $\Eff^1(Y)$ to an extremal ray of $\Eff^1(X)$ when the degree of $S$ is at least $2$.
On the other hand, there are many counterexamples when $S$ is of degree $1$, see Remark \ref{rema:counterexample}.

Another problem is how to descend $\pi$ from $\mathbb{C}$ to a non-closed field with prescribed Galois action.
In Section \ref{sec:descend1} and \ref{sec:descend}, we study the existence of a cover $\pi:\overline{Y}\ra \overline{X}$ of fixed type and splitting group over a non-closed field $F$, when either $\overline{Y}$ or $\overline{X}$ is defined over $F$.
Using the language of finite descent type of Harpaz and Wittenberg \cite{harpaz2024supersolvable}, we show the following result, which allows one to prove the existence of an accumulating quasi-étale cover from the database in Appendix \ref{sec:app}, without knowing its defining equation.
\begin{prop}[Proposition \ref{prop-descend}]
	Let $S$ be a Du Val del Pezzo surface over a field $F$ of characteristic $0$ and let $\overline{T}$ be a finite descent type over $S_{\sm}$.
	Suppose $S_{\sm}(F)\neq\emptyset$.
	Then there exists a quasi-\'etale cover $\pi:T\ra S$ of type $\overline{T}$ such that
	$\Gal(\overline{F}/F)$ acts trivially on the quotient group $\Pic(T^\prime_{\overline{F}})_\bQ/f^\ast\Pic(X_{\overline{F}})_\bQ$,
	where $T^\prime$ denotes the normalization of $S$ in $F(T)$.
\end{prop}

\noindent
{\bf Notations.}
Throughout this paper, we will use the following notations:
\begin{itemize}
%	\item $C_n$ is a cyclic group of order $n$.
%	\item $\fS_n$ is a symmetric group on $n$ letters.
%	\item $\fD_n$ is a dihedral group of order $2n$.
%	\item $H\wr S$ is the wreath product of groups $H$ and $S\subseteq \fS_n$.
%	\item $A_{\tors}$ is the torsion subgroup of an abelian group $A$.
%	\item $\rho(X)$ is the Picard rank of a variety $X$.
%	\item $K_X$ is the canonical line bundle of a variety $X$.
%	\item $\cL$ is an adelically metrized line bundle whose underlying line bundle is $L$. 
	\item $f(B)\sim g(B)$ means that $\lim_{B\ra \infty}({f(B)}/{g(B)})=1$.
	\item $f(B)\ll g(B)$ means that $\limsup_{B\ra\infty}({f(B)}/{g(B)})<+\infty$.
	\item $f(B)\asymp g(B)$ means that $f(B)\ll g(B)$ and $g(B)\ll f(B)$.
	%	$\exists~k,\exists~ B_0$, $\forall~B>B_0$: $ \left|f(B)\right|\leq kg(B)$.
	% \item $f(B)\asymp g(B)$ means $f(B)\ll g(B)\ll f(B)$.
\end{itemize}

\noindent
{\bf Acknowledgments.}
The author would thank his advisor Sho Tanimoto for constant support, and Tim Browning, Ulrich Derenthal, Daniel Loughran, J\'anos Koll\'ar and Yuri Tschinkel for useful conversations and remarks.
The author was partially supported by JST FOREST program Grant Number JPMJFR212Z,
JSPS Bilateral Joint Research Projects Grant Number JPJSBP120219935
and Grant-in-Aid for JSPS Fellows Number 24KJ1234.

\section{Preliminary}\label{sec-preliminary}
In this section, we recall well-known facts on Manin's conjectures and weak del Pezzo surfaces.
\subsection{Thin set}\label{sec-thin-set}
The notion of thin sets was proposed by Serre in the study of inverse Galois problem, see \cite{serre2016topics}.
\begin{defi}\label{defi-thin set}
	Let $X$ be a variety over a field $F$.
	A subset $Z$ of $X(F)$ is called \textit{thin} if it is contained in a finite union of sets in the following two types:
	\begin{itemize}
		\item $Y(F)$ for a closed subscheme $Y$ of $X$ with dimension less than $X$.
		\item  $f(Y(F))$ for a variety $Y$ and a dominant morphism $f:Y\to X$ of degree at least $2$.
	\end{itemize}
\end{defi}

\subsection{$a$- and $b$-invariants}
The $a$- and $b$-invariants are defined as follows.
\begin{defi}\label{defi-a}
	Let $X$ be a projective variety over a field of characteristic $0$ and $L$ be a big and nef $\bQ$-divisor on $X$.
	If $X$ is smooth, the $a$-invariant is defined as
	$$a(X,L):=\inf\{ t\in\bR\mid K_X+tL\in \Eff^1(X)\},$$
	and the $b$-invariant $b(X,L)$ of $X$ is the dimension of the linear space generated by
	$$\{\alpha\in\Nef_1(X) \mid (K_X+a(X,L)L)\cdot\alpha=0\}.$$
	If $X$ is singular, then we define the two invariants by $(\widetilde{X},\beta^\ast L)$ for a resolution $\beta:\widetilde{X}\ra X$.
\end{defi}
\begin{rema}
	By \cite[Proposition 2.7]{HTT15} and \cite[Proposition 4.13]{LST}, the definitions are independent of the choice of $\beta$.
\end{rema}

\begin{defi}\label{defi:adjoint rigid a cover}
	Under the above settings,
	we say a dominant thin morphism $f:Y\ra X$ is an \textit{$a$-cover} if
	$a(Y,f^\ast L)=a(X,L)$.
	We say $f$ is an \textit{adjoint rigid $a$-cover} if, further, $\kappa(Y,K_Y+a(Y,f^\ast L)f^\ast L)=\kappa(X,K_X+a(X,L)L)$.
\end{defi}

\subsection{Quasi-\'etale cover}\label{sec:quasi etale}
A variety over a ﬁeld $F$ is an integral separated scheme of ﬁnite type over $F$.
Let $X$ be a normal variety.
A \textit{cover} of $X$ is a morphism $\pi:Y\ra X$ where $Y$ is a normal variety, and $\pi$ is finite and surjective.
A \textit{morphism of covers} $\pi:Y\ra X$ and $\pi^\prime:Y^\prime\ra X$ is a morphism $f:Y\ra Y^\prime$ such that $\pi^\prime\circ f=\pi$.
A cover $\pi:Y\ra X$ of varieties is called \textit{Galois} if $\#\Aut(Y/X)=\deg \pi$.
We have $\Aut(Y/X)=\Aut(k(Y)/k(X))$, and $\pi$ is Galois if and only if the field extension $k(Y)/k(X)$ is Galois.
A morphism $f:Y\ra X$ between normal varieties is called \textit{quasi-\'etale} if $f$ is quasi-finite and \'etale in codimension $1$.
A cover between integral normal varieties is either \'etale or ramified.
Thus a cover is quasi-\'etale if there is no branch divisor.

We have the following geometric version of Galois correspondence.
\begin{prop}[\cite{corvaja2022} Proposition 2.6]%need quasi-projective?
	Let $\pi:Y\to X$ be a Galois cover with Galois group $G$.
	Then there is a one-to-one correspondence between subgroups $H$ of $G$ and intermediate covers $Y\to Z\to X$ with $Z$ normal and integral, given by:
	\begin{itemize}
		\item Given $H$, define $Z$ as the normalization of $X$ in $k(Y)^H$. The induced morphism $Y\to Z$ is a geometric quotient, and we may also denote $Z$ by $Y/H$.
		\item Given $Z$, define $H$ as the Galois group $\Gal(k(Y)/k(Z))$.
	\end{itemize}
\end{prop}

To study quasi-\'etale covers of $X$, it is sufficient to study the fundamental group of the smooth part of $X$.
\begin{lemm}\label{lemm-smooth-part}
	Let $X$ be a normal projective variety and let $X^\circ$ be any open subvariety of $X\backslash\Sing(X)$ that is isomorphic in codimension $2$.
	Then there is a one-to-one correspondence between isomorphic classes of quasi-\'etale covers of $X$ and that of \'etale covers of $X^\circ$.
\end{lemm}
\begin{proof}
	Since $X$ is normal, $\Sing(X)$ has codimension $\geq2$ in $X$.
	Let $\pi:Y\ra X$ be an quasi-\'etale cover.
	By Zariski-Nagata purity, $\pi$ is \'etale over $S\backslash\Sing(X)$.
	Conversely, let $\pi^\circ:Y^\circ\ra X^\circ$ be an \'etale cover.
	By Zariski's main theorem, $\pi^\circ$ can be uniquely extended to a finite morphism $\pi:Y\ra X$.
	The variety $Y$ is projective since $X$ is.
	So $\pi$ is surjective and in particular a quasi-\'etale cover.
\end{proof}
%\begin{rema}\label{rema:Zariski main theorem}
%	By Zariski’s main Theorem in the equivariant setting \cite[Theorem 3.8]{GKP16},
%	if the cover $\pi^\circ:Y^\circ\ra X^\circ$ is Galois with group $G$, then $\pi:Y\ra X$ is also Galois with group $G$, \ie~the group action of $G$ on $X^\circ$ can be extended to $X$ uniquely.
%\end{rema}

\subsection{Weak del Pezzo surface}\label{sec1.1-dvdp}
In the following three sections, we recall basic properties of weak del Pezzo surfaces.
We refer to \cite{derenthal2008nef,Derenthal2013,Dolgachev2012} for more details.
In this section, we work over an arbitrary field $F$ of characteristic $0$.
A \textit{Du Val del Pezzo surface} $S$ is a projective geometrically integral surface with ample anticanonical line bundle and at worst canonical singularities.
When $S$ is smooth, we call it a \textit{del Pezzo surface}.
A \textit{weak del Pezzo surface} is a smooth projective geometrically integral surface with big and nef anticanonical line bundle.
Let $X$ be a surface in any of the above classes.
Then there is an intersection pairing on $\Pic(X_{\overline{F}})$, and we call the self intersection of the canonical class $K_X$ the \textit{degree} of $X$.

Let $X$ be a weak del Pezzo surface.
The Proj construction of the graded ring
$$\bigoplus_{m\geq 0} H^0(X,-mK_X)$$
induces a birational morphism $\rho:X\ra S$, which is known as the \textit{anticanonical model map}.
The image $S$ is a Du Val del Pezzo surface and conversely, for any $S$ a Du Val del Pezzo surface, $\rho:X\ra S$ is the minimal resolution of $S$.

%Let $X$ be a weak del Pezzo surface.
%We call $d:=K_X^2$ the degree of $X$.
%It turns out that $1\leq d\leq 9$ and $X_{\overline{}}$ is isomorphic to $\bP^1_{\overline{F}}\times\bP^1_{\overline{F}}$,
%the Hirzebruch surface $\bF_{2}$,
%or the blow up of $\bP^2_{\overline{F}}$ at $9-d$ points in almost general positions,
%where $\overline{F}$ is an algebraic closure of $F$.
%The Proj construction of the graded ring
%$$\bigoplus_{m\geq 0} H^0(X,-mK_X)$$
%defines a birational morphism $X\ra S$ which is known as the anticanonical model map.
%It turns out that $S$ is a Du Val del Pezzo surface and $X\ra S$ coincides with the minimal resolution of $S$.
%Conversely, the minimal resolution of any Du Val del Pezzo surface is a weak del Pezzo surface \cite[Theorem 8.3.2]{Dolgachev2012}.
%So we may abuse the notation of a Du Val del Pezzo surface $S$ and its associated weak del Pezzo surface $X$.

\subsection{Geometry of weak del Pezzo surface}
In this section, we work over an algebraically closed field $F$ of characteristic $0$.
Let $X$ be a weak del Pezzo surface of degree $d$.

For $n\in\bZ$, an integral curve $C$ on $X$ is called an \textit{$(n)$-curve} if $C^2=n$ and $K_X\cdot C=2-n$,
and a divisor class $D\in\Pic(X)$ is called an \textit{$(n)$-class} if $D^2=n$ and $K_X\cdot D=2-n$.
By Adjunction Formula, an $(n)$-curve is isomorphic to $\bP^1$.
We say $C$ or $D$ is \textit{negative} if $n<0$.
We say a class $D\in\Pic(X)$ is \textit{effective} if it contains an effective divisor $C$.
Here, $C$ is unique when $D$ is a negative class.
Every $(-1)$-class is effective, and it is integral if and only if $D\times C\geq 0$ for every $(-2)$-curve $C$ of $X$.
The only negative curves on $X$ are the $(-1)$- and $(-2)$-curves,
and the anticanonical model map $\rho:X\ra S$ exactly contracts the $(-2)$-curves.
A $(-1)$-curve is also called a \textit{line}.

Except when $d=8$ and $X$ is the Hirzebruch surfaces $\bF_0=\bP^1\times\bP^1$ or $F_2$,
we have $1\leq d\leq 9$ and a choice of $9-d$ linear disjoint lines $E_1,\dots,E_{9-d}$ on $X$ defines a morphism $\varphi:X\ra \bP^2$ which is a blow up at $9-d$ points in almost general position.
This choice induces an orthogonal basis $H,E_1,\dots,E_{9-d}$ of $\Pic(X)$, where $H$ is the pullback of the hyperplane class in $\bP^2$ by $\varphi$.

Let $C$ be an integral curve on $X$.
We call $C$ an \textit{extremal curve} if the class of $C$ generates an extremal ray of the pseudo-effective cone $\smash{\Eff^1(X)}$.
When $d\leq7$, the extremal curves on $X$ are precisely the negative curves; but when $d=8$ or $9$, they can also include $(0)$- or $(1)$-curves.

The \textit{dual graph} of $X$, denoted by $\Gamma(X)$, is a graph where each vertex represents an extremal curve class with weight equal to its self-intersection number, and each edge connects two classes with positive intersection, with weight equal to the intersection number.

It is elementary to check the following lemma.
\begin{lemm}\label{lemm-dP2}
	Let $X$ be a weak del Pezzo surface.
	\begin{enumerate}
		\item Let $C$ be a $(-1)$-curve and $D$ be a connected union of $(-2)$-curves on $X$.
		Then $C\cdot D\leq2$ and at most one $D$ achieves equality.
		In particular, $C\cdot D=2$ implies that $D$ is a maximal linear chain of $(-2)$-curves on $X$, and that $C$ intersects $D$ at the two endpoints of $D$. 
	\end{enumerate}
	Suppose $X$ is of degree at least $2$.
	\begin{enumerate}[resume]
		\item There are no $(-1)$-curves $C_1$ and $C_2$ with $C_1\cdot C_2\geq3$.
		\item There are no three $(-1)$-curves $C_i$ with $C_i\cdot C_j\geq2$ for $i=1,2,3$.
	\end{enumerate}
\end{lemm}

\subsection{Types}\label{sec1.2-types}

A \textit{lattice} is a free abelian group with an integral-valued symmetric bilinear form.
An \textit{isometry} between two lattice is a linear map that preserve the symmetric bilinear form.
We recall the notion of Cremona isometry appeared in \cite[Section 8.2.8]{Dolgachev2012}.
\begin{defi}\label{def-cremona}
	Let $X$ and $X^\prime$ be two weak del Pezzo surfaces.
	Then an isometry $\sigma:N^1 (X)\ra N^1 (X^\prime)$ is called a \textit{Cremona isometry} if $\sigma$ restricts to a bijection between $\Eff^1(X)$ and $\Eff^1(X^\prime)$ which sends $[K_X]$ to $[K_{X^\prime}]$.
	We say that $X$ and $X^\prime$ have the same \textit{type} whenever there exists a Cremona isometry between them.
	We write $\Cris(X)$ for the group of Cremona isometries of $X$ to itself.
\end{defi}
A vector $v$ of a lattice is called a \textit{root} if $v^2=-2$.
A negative definite lattice generated by roots is called a \textit{root lattice}.
Any root lattice can be decomposed into irreducible ones.
These are denoted by $\bfA_n\,(n\geq1)$, $\bfD_n\,(n\geq4)$, $\bfE_n\,(6\leq n\leq 8)$.

Let $X$ be a weak del Pezzo surface of degree $d\leq7$ over an algebraically closed field $F$ of characteristic $0$.
Let $R_d$ be the orthogonal complement of the sublattice $\bZ[K_X]$ of $\Pic\,(X)$ when $d\leq6$, and be that of $\bZ[K_X]+\bZ[H]$ when $d=7$.
It is a root lattices recorded in Table \ref{tab-root-lattice}.
We call $R_d$ the \textit{del Pezzo lattice} of degree $d$.
\begin{table}[htbp]
	\begin{tabular}{cccccccc}
		\hline
		$d$&7&6&5&4&3&2&1 \\
		\hline
		$R_d$&$\bfA_1$&$\bfA_2+\bfA_1$&$\bfA_4$&$\bfD_5$&$\bfE_6$&$\bfE_7$&$\bfE_8$\\
		\hline
	\end{tabular}
	\caption{The root lattices $R_d$}
	\label{tab-root-lattice}
\end{table}
For simplicity, from now on, we assume that $d\leq6$.
The $(-2)$-curves on $X$ generate a root sublattice $R$ of $R_d$.
Two weak del Pezzo surfaces have the same type if and only if their associated root sublattices in $R_d$ are isomorphic.
Conversely, let $R\subseteq R_d$ be a root sublattice and let $X$ be a weak del Pezzo surface.
A \textit{marking} on $X$ an isomorphism $R_d\to (\bZ[K_X])^\perp$ of lattices which restricts to an isomorphism between $R$ and the sublattice of $\Pic(X)$ generated by $(-2)$-curves.
Every root sublattice of $R_d$ corresponds to a weak del Pezzo surface, except for the following four cases: $7\bfA_1$ in degree $2$, and $7\bfA_1,8\bfA_1$ and $\bfD_4+4\bfA_1$ in degree $1$.
(Weak del Pezzo surfaces of these four types do exist in characteristic $2$.)
%The classification of types of weak del Pezzo surfaces has been done in degree $3$ by Schl\"afli \cite{Sch1863} and Cayley \cite{Cay1869}, and in degrees $1$ and $2$ by Du Val \cite{DuVal1934}.

Two root sublattices of $R_d$ may be isomorphic yet fail to be isomorphic as sublattices of $R_d$.
However, the number of lines on the surface is sufficient to distinguish between these cases.
This leads to the following notations.
\begin{nota}\label{nota:type}
    We denote by $S_d(\mathbf{ADE})$ for the type of a Du Val del Pezzo surface of degree $d$ with ADE type $\mathbf{ADE}$.
    When the degree and the ADE type are not enough to describe the type, we add the number of lines.
For example, $S_4(2\bfA_1(8l))$ (\resp\,$S_4(2\bfA_1(9l))$) denote the type of Du Val del Pezzo surfaces of degree $4$ with two $\bfA_1$ singularities and $8$ lines (\resp\,$9$ lines).
We abuse the same notations for the corresponding weak del Pezzo surfaces.
We use $S_d$ to denote the type of a (smooth) del Pezzo surface of degree $d$.
\end{nota}
From the arguments above, the types of Du Val del Pezzo surfaces of degree  $d \leq 7 $ correspond bijectively to the root sublattices of $R_d$.
For $d=8$ or $9$, representatives of each type are $\bP^2,\bF_0=\bP^1\times\bP^1,\bF_1=\Bl_1\,\bP^2,$ and $\bF_2$.

\section{Constructions}
In this section, we establish a general framework for the study of quasi-\'etale covers of Du Val del Pezzo surfaces.

\subsection{fundamental group of smooth part}\label{sec1.3-fundamental-groups}
We fix a field $F$ of characteristic $0$ and write $\overline{F}$ for its algebraic closure.

\begin{lemm}\label{lemm-T-is-DVdP}
	Let $\pi:T\ra S$ be a quasi-\'etale cover of a Du Val del Pezzo surface $S$ over $F$.
	Then $T$ is a Du Val del Pezzo surface.
\end{lemm}
\begin{proof}
	Since $\pi$ is finite, the Hurwitz formula $K_T\sim \pi^\ast K_S$ implies that $-K_T$ is ample.
	Since $\discrep(T)\geq \discrep(S)=0$ \cite[Proposition 5.20 (3)]{Kollar-Mori},
	$T$ has at worst canonical singularities.
\end{proof}

By Lemma \ref{lemm-smooth-part}, quasi-\'etale covers of $S$ correspond to \'etale covers of the smooth part $S_{\sm}$, which are parametrised by the \'etale fundamental group of $S_{\sm}$.
To proceed, we first determine the fundamental group and classify the étale covers over the complex field.

In \cite{MZ88,MZ93}, Miyanishi and Zhang computed $\pi_1(S_\mathrm{sm})$ and the singularity type of the universal cover for $S$ which is relative minimal.
They concluded that $\pi_1(S_\mathrm{sm})$ is a finite abelian group since the group is preserved under MMP steps.
Their computation made use of the $\bP^1$-bundle structures.
We provide a more direct method than theirs to compute $\pi_1(S_\mathrm{sm})$, which can be easily implemented on a computer.
\begin{prop}\label{prop:fundamental group}
	Let $S$ be a Du Val del Pezzo surface over $\bC$ and $\rho:X\ra S$ be the minimal resolution.
	Write $R$ for the sublattice of $\Pic(X)$ generated by the $(-2)$-curves on $X$.
	Then
	$$\pi_1(S_\mathrm{sm})\cong (\Pic(X)/R)_{\tors}.$$
\end{prop}

\begin{proof}
%	A standard argument reduces us to the case of $S$ over the complex field $\bC$.
	Let $E$ be the exceptional divisor of $\rho$.
	Consider the long exact sequence:
% https://q.uiver.app/#q=WzAsNixbMSwwLCJIXzIoRDtcXGJaKSJdLFsyLDAsIkhfMihYO1xcYlopIl0sWzMsMCwiSF8yKFgsRTtcXGJaKSJdLFs0LDAsIkhfMShEO1xcYlopIl0sWzAsMCwiXFxjZG90cyJdLFs1LDAsIlxcY2RvdHMiXSxbMCwxLCJpIl0sWzEsMl0sWzIsM10sWzQsMF0sWzMsNV1d
\[\begin{tikzcd}[column sep=small]
	\cdots & {H_2(E;\bZ)} & {H_2(X;\bZ)} & {H_2(X,E;\bZ)} & {H_1(E;\bZ)} & \cdots
	\arrow[from=1-1, to=1-2]
	\arrow["i", from=1-2, to=1-3]
	\arrow[from=1-3, to=1-4]
	\arrow[from=1-4, to=1-5]
	\arrow[from=1-5, to=1-6]
\end{tikzcd}\]
	Write $E=\sum_{i=1}^{r}E_i$ for the irreducible decomposition of $E$.
	Then $H_2(E;\bZ)$ is isomorphic to the free $\bZ$-module generated by the class of $E_i$.
	Since the matrix $(E_i\cdot E_j)$ is negative definite \cite[Lemma 3.40]{Kollar-Mori},
	$E_i$ are linearly independent in $H_2(X;\bZ)$.
	So the map $i$ is injective.
	Note that $H_1(E;\bZ)=0$ and $H_2(X,E;\bZ)\cong H^2(S_{\sm};\bZ)$ by Lefschetz duality.
	We conclude that $H^2(S_{\sm};\bZ)\cong \Pic(X)/R$, where $ \Pic(X)\cong H^2(X;\bZ)$ since $X$ is a rational surface.
	By the universal coefficient theorem, we have that $H^2(S_{\sm};\bZ)_{\tors}\cong H_1(S_{\sm},\bZ)_{\tors}$.
	
	To conclude, it suffices to show that $\pi_1(S_{\sm})$ is an abelian group.
	The following short proof was observed by Koll\'ar \cite[Remark 28]{Kol2009}.
	Let $\pi:T\ra S$ be a quasi-\'etale cover.
	Let $C$ be a general member of $\left|-K_S\right|$ which is nonsingular.
	By the adjunction formula, $C$ is a smooth curve of genus $1$.
	The cover $\pi$ restricts to an \'etale cover of $C$.
	Therefore, $\Aut(T/S)$ is a quotient of the fundamental group of $C$, which are abelian. The assertion follows.
\end{proof}
%\begin{rema}
%	There is a version of Proposition \ref{prop:fundamental group} for weak Fano pairs with klt singularities, see \cite[Corollary 4]{braun2021local}.
%\end{rema}
Proposition \ref{prop:fundamental group} has a conceptual interpretation by lattice theory.
\begin{coro}\label{coro-primitive-sublattice}
	Let $S$ be a Du Val del Pezzo surface of degree $d$ over $F$,
	whose type is given by the root sublattice $R$ of $R_d$.
	Then $S$ admit a nontrivial quasi-\'etale cover if and only if $R$ is a non-primitive sublattice of $R_d$.\qed
\end{coro}

We then compute the fundamental group for all types of $S$ using $\mathtt{Magma}$ \cite{magma}, and the result will be presented in Theorem \ref{theo:classification}.
A similar computation was done by Bright \cite{bright2013brauer}, which was to determine the Brauer group of $S$.
By the following corollary, one may consult the tables in \cite{bright2013brauer} for the fundamental group of $S$ for each types.
\begin{coro}\label{coro-Brauer-group}
	Let $S$ be a Du Val del Pezzo surface over $F$.
	Then $\piet(S_\mathrm{sm})\cong \Br(S)$.
\end{coro}
\begin{proof}
	Combine Proposition \ref{prop:fundamental group} and \cite[Corollary 5]{bright2013brauer}.
\end{proof}

In fact, the above computation tells us more.
Any torsion element $B$ of $\Pic(X)/R$ induces a cyclic cover of $X$ with branch divisor $B$.
Since quasi-\'etale covers of Du Val singularities are well-known (see \cite[Example]{Artin_1977} for the degree $2$ case), the divisor $B$ determines the singularity type of $Y$.
When there are two types associated to the singularity type, further analysis is required to determine $\Type(Y)$.
%It will be clear after Proposition \ref{prop:model} that $\Type(Y)$ is uniquely determined by $\Type(X)$ and $B$.

\begin{defi}
	Let $S$ and $S^\prime$ be Du Val del Pezzo surfaces of degree $d\leq4$ of the same type, corresponding to the root sublattice $R\subseteq R_d$ under some fixed markings.
	We say two quasi-\'etale covers $\pi:T\ra S$ and $\pi:T^\prime\ra S^\prime$ are of the same \textit{type} if they are induced by the same torsion element of $R_d/R$.
	We denote the type of $\pi$ by $\Type(\pi)$.
\end{defi}
We assume $d\leq 4$ since there is no nontrivial quasi-\'etale cover of $S$ when $d\geq5$.

\subsection{A model of quasi-\'etale covers}\label{sec1.4-negative}

We work over an arbitrary field $F$ of characteristic $0$.
Let $\pi:T\ra S$ be a quasi-\'etale cover of normal projective surfaces and let $Y\ra T$ and $X\ra S$ be the minimal resolutions.
Let $f:T^\prime\ra X$ be the normalization of $X$ in the function field of $T$.
Let $g:T^\prime\dashrightarrow Y$ be the rational map defined by composition.
Denote by $\rho_{T^\prime}: T_0 \to T^\prime$ the minimal resolution of $T^\prime$.

The following proposition can be seen as a global version of \cite[Proposition 1.5]{Artin_1977}.
\begin{prop}\label{prop:model}
	The birational map $g:T^\prime\dashrightarrow Y$ is a morphism.
	The ramification locus of $f$ equals to the exceptional divisor of $g$.
	% https://q.uiver.app/#q=WzAsNixbMiwwLCJZIl0sWzMsMCwiVCJdLFszLDEsIlMiXSxbMiwxLCJYIl0sWzEsMCwiVF5cXHByaW1lIl0sWzAsMCwiVF8wIl0sWzQsMCwiZyJdLFswLDMsIlxcd2lkZXRpbGRle1xccGl9IiwwLHsic3R5bGUiOnsiYm9keSI6eyJuYW1lIjoiZGFzaGVkIn19fV0sWzQsMywiZiIsMl0sWzMsMiwiXFxyaG9fUyJdLFsxLDIsIlxccGkiXSxbMCwxLCJcXHJob19UIl0sWzUsNCwiXFxyaG9fe1ReXFxwcmltZX0iXV0=
	\begin{equation}
		\begin{tikzcd}\label{diag-model}
			{T_0} & {T^\prime} & Y & T \\
			&& X & S
			\arrow["{\rho_{T^\prime}}", from=1-1, to=1-2]
			\arrow["g", from=1-2, to=1-3]
			\arrow["f"', from=1-2, to=2-3]
			\arrow["{\rho_T}", from=1-3, to=1-4]
			\arrow["{\widetilde{\pi}}", dashed, from=1-3, to=2-3]
			\arrow["\pi", from=1-4, to=2-4]
			\arrow["{\rho_S}", from=2-3, to=2-4]
		\end{tikzcd}
	\end{equation}
\end{prop}

\begin{proof}
	By Zariski's main theorem, $T^\prime$ factors through $T$ by a morphism,
    and a divisor contracted by $T^\prime\ra T$ must lie above the singular locus of $S$.
	So let $D$ be a prime divisor above $T$ and let $C$ denote its image above $S$.
	Let $r$ be the ramification index along $D$.
	Then $a(D,T)+1=r(a(C,S)+1)$, where $a(D,T)$ means the discrepancy of $D$ with respect to $T$; see the proof of \cite[Proposition 5.20]{Kollar-Mori}.
	Since $S$ has only canonical singularities, we have $a(C,S)\geq0$.
	Thus $a(D,T)=0$ if and only if $r=1$ and $a(C,S)=0$.
    The prime divisors above $S$ with discrepancy $0$ are exactly the exceptional divisors of $X\ra S$.
    And similarly, the prime divisors above $T$ with discrepancy $0$ are exactly the exceptional divisors of $Y\ra T$.
    Thus a prime divisor $D$ above $T$ is contracted by $g$ if and only if it is ramified above a $(-2)$-curve on $X$.
	Hence $g$ is well-defined and precisely contracts the ramification divisor of $f$.
\end{proof}

\begin{lemm}\label{lemm:ramification-of-model}
	Assume the quasi-\'etale cover $\pi$ is cyclic of prime degree $p$.
	Let $B$ be the branch divisor of $f$.
	Then $f$ is totally ramified, and each connected component of $B_{\red}$ is a chain of $(-2)$-curves of length $p$.
	Let $R\coloneqq f^\ast B$ be the ramification divisor.
	Then $R_{\red}$ is a disjoint copy of a chain of $(-1)$-curves of length $p$.
%	and
%	$p\in T^\prime$ is a singular point of $T^\prime$ if and only if $p$ is a singular point of $R_{\red}$.
	When $p$ is odd, the central point of the $R_{x,\red}$ is a singular point of $T^\prime$ of type $1/p(1,1)$.
\end{lemm}
\begin{proof}
	The ramification index of $\pi:T^\prime\ra X$ at any point of $X$ divides the degree $p$ of $\pi$.
	Thus $\pi$ is totally ramified at its ramification locus.
	Write $G\coloneqq \bZ/p\bZ$.
	The $G$-action on $T$ extends to a $G$-action on the smooth surface $Y$.
	The stabilizer subgroup of $G$ at any point of $y \in Y$ is either the trivial group or $G$.
	Consequently, the image $x$ of $y$ in the quotient $Y/G$ is either a smooth point or a cyclic quotient singularity of degree $p$.
	Since $\rho_S:X\ra S$ resolves $x$ with exceptional divisors of discrepancy $0$, we conclude that $x$ is a Du Val cyclic singularity, \ie\, an $\bfA_p$ singularity.
	
	The exceptional divisor $B_{x,\red}$ above $x$ is a chain of $(-2)$-curves of length $p$, which is a connected component of $B_{\red}$, any every connected component of $B_{\red}$ are obtained in this way.
	By projection formula, the preimage of $B_{x,\red}$ in $T^\prime$ is a chain of $(-1)$-curves of length $p$.
	It is a standard property of cyclic covers that the singular points of $T^\prime$ are exactly that of $R_{\red}$.
	Let $B_i$ denotes the $i$-th $(-2)$-curve on $B_{x,\red}$.
	Then up to the order, the branch divisor $B_x$ above $x$ is $B_1+2B_2+\dots+(p-1)B_{p-1}$.
	When $p$ is an odd prime, there is a central singular point $x_0$ of $B_{x,\red}$ which is the intersection point of $B_{(p-1)/2}$ and  $B_{(p+1)/2}$.
	Thus the singular point $y_0$ above $x_0$ is defined by $z^p-x^{(p-1)/2}y^{(p+1)/2}=0,$
	which is analytic-locally isomorphic to $z^p-xy^{p-1}=0$, \ie\, an $1/p(1,1)$-singularity.
\end{proof}

\begin{exam}
	Artin \cite[Example]{Artin_1977} described the branch locus of every double quasi-\'etale covers of Du Val singularity,
	which verifies Lemma \ref{lemm:ramification-of-model} in the case of $p=2$: the branch locus $B_{\red}$ is a disjoint union of $(-2)$-curves.

	When $p=3$, the only singular points on $T^\prime$ are $\frac{1}{3}(1,1)$-singularities,
	whose exceptional divisor is a single $(-3)$-curve.
	
	When $p=5$, the situation reduced to an $A_4$-singularity being covered by a smooth point.
	The three singular points of $B_{x,\red}$ are given by
	\begin{align*}
		z^5-xy^2=0,\quad x^5-x^2y^3=0,\quad z^5-x^3y^4=0,
	\end{align*}
	respectively.
	They are analytic-locally isomorphic to the singularities defined by
	$$z^5-xy^2=0,\quad x^5-xy^4=0,\quad z^5-xy^3=0,$$
	respectively.
	By Hirzebruch–Jung continued fractions (see \cite{barth2015compact}), the dual graph of the preimage of $x$ in $T_0$ is
	\[\begin{tikzpicture}
		\def \x {1.5};
		\node (01) at (0,0) {$(-1)$};
		\node (02) at (\x,0) {$(-2)$};
		\node (03) at (2*\x,0) {$(-3)$};
		\node (04) at (3*\x,0) {$(-1)$};
		\node (05) at (4*\x,0) {$(-5)$};
		\node (06) at (5*\x,0) {$(-1)$};
		\node (07) at (6*\x,0) {$(-3)$};
		\node (08) at (7*\x,0) {$(-2)$};
		\node (09) at (8*\x,0) {$(-1),$};
		\draw (01)--(02)--(03)--(04)--(05)--(06)--(07)--(08)--(09);
	\end{tikzpicture}\]
	where the $(-1)$-curves form the ramification locus.
\end{exam}

\begin{rema}\label{rema-resolution-of-ramification}
	The birational morphism $\rho_{T}\circ g\circ\rho_{T^\prime}:T_0\ra T$ can be seen as the minimal resolution of ramification of $\pi:T\ra S$, in the sense that the fixed locus of $\sigma$ on $T_0$ is of purely $1$-dimensional, and $T_0$ is the minimal smooth surface above $T$ with this property.
\end{rema}

\subsection{Correspondence of extremal curves}\label{sec:corr}

To find quasi-\'etale covers of Du Val del Pezzo surfaces with equal or higher $b$-invariants, we first investigate the correspondence between extremal curves under the cover.
\begin{defi}\label{defi-corr}
	Using the notations from diagram (\ref{diag-model}),
	We define
	 $$\Gamma(\pi)=\left\{ (C,D)\in \Gamma(Y)\times\Gamma(X)\middle\vert 
		\widetilde{\pi}(C)= D \right\}.$$
	We call $\Gamma(\pi)$ the \textit{correspondence of extremal curves} induced by $\pi$.
\end{defi}

We define some special configurations of $(-1)$-curves on surfaces.
\begin{defi}\label{defi-star-line}
	Let $X$ be a normal projective surface, and let
	$Z\coloneqq\bigcup_{i\in\bZ/d\bZ} C_i$
	be a union of $(-1)$-curves $C_i$ for $d\geq2$.
	Suppose $C_i$ and $C_{i+1}$ intersects transversely at one point $p_i$ and there is no other intersection among $C_i$ for each $i\in\bZ/d\bZ$.
	We call $Z$ a \textit{$d$-circle} if $p_i$ $(i\in\bZ/d\bZ)$ are distinct points.
	We call $Z$ a \textit{$d$-star} if $p_i$ are the same point for all $i\in\bZ/d\bZ$.
	See Figure \ref{fig-circle-star}.
\end{defi}

\begin{figure}[htbp]
	\centering
	% First subfigure

	\begin{subfigure}[b]{0.25\linewidth}
		\centering
		\begin{tikzpicture}
			% Define the central point
			\def\x{1}
			\coordinate (P) at (-0.2*\x, 0);
			\coordinate (Q) at (0.2*\x, 0);
			\draw[thick] (P) arc[start angle=45, end angle=-45, radius=\x];
			\draw[thick] (Q) arc[start angle=180-45, end angle=180+45, radius=\x];
			% Draw the central point
		\end{tikzpicture}
		\caption{$2$-circle}
	\end{subfigure}
	\hspace{0.05\linewidth}
	\begin{subfigure}[b]{0.25\linewidth}
		\centering
		\begin{tikzpicture}
			% Define the vertices of the equilateral triangle
			\def\x{1};
			\coordinate (A) at (0, 0);
			\coordinate (B) at (\x, 0);
			\coordinate (C) at ($(A)!0.5!(B) + (0, {\x*sqrt(3)/2})$);
			
			% Draw the triangle
			\draw[thick,bend right=20] ($(B)!1.5!(A)$) to ($(A)!1.5!(B)$);
			\draw[thick,bend right=20] ($(C)!1.5!(B)$) to ($(B)!1.5!(C)$);
			\draw[thick,bend right=20] ($(A)!1.5!(C)$) to ($(C)!1.5!(A)$);
		\end{tikzpicture}
		\caption{$3$-circle}
	\end{subfigure}
	\hspace{0.05\linewidth}
	\begin{subfigure}[b]{0.25\linewidth}
		\centering
		\begin{tikzpicture}
			% Define the central point
			\def\x{1}
			\coordinate (O) at (0, 0);
			\coordinate (A) at (0, \x);
			\coordinate (B) at ({\x*sin(120)}, {\x*cos(120)});
			\coordinate (C) at ({\x*sin(240)}, {\x*cos(240)}); 
			\draw[thick] ($(A)!1.25!(O)$) to (A);
			\draw[thick] ($(B)!1.25!(O)$) to (B);
			\draw[thick] ($(C)!1.25!(O)$) to (C);
			
			% Draw the central point
		\end{tikzpicture}
		\caption{$3$-star}
	\end{subfigure}
	
	\caption{}
	\label{fig-circle-star}
\end{figure}

An important feature of a $d$-circle $Z$ is that there exists an \'etale cover from $Z$ to a nodal rational curve (see \cite[Excercise III.10.6]{Hartshorne1977AlgebraicG}).

Let $\pi:T\ra S$ be a quasi-\'etale cover of Du Val del Pezzo surfaces with $\deg(\pi)\geq2$.
Then $\pi$ is Galois and can be decomposed into Galois quasi-\'etale subcovers of degree $2,3$ or $5$.
We summarize the main results of this section in the following theorem.
\begin{theo}\label{theo-corr}
	Let $\pi:T\ra S$ be a quasi-\'etale cover of Du Val del Pezzo surfaces over an algebraically closed field of characteristic $0$.
	Suppose $\deg(\pi)=p=2,3$ or $5$.
	Using the notations from diagram (\ref{diag-model}), we have the following assertions:
	\begin{enumerate}
		\item $\widetilde{\pi}:Y\dasharrow X$ maps a $(-2)$-curve to a $(-2)$-curve. \label{theo-corr-1}
		\item $\pi:T\ra S$ maps a $(-1)$-curve $C$ to a $(-1)$-curve, except when $C$ is contained in a $p$-circle of $(-1)$-curves, in which case $C$ is mapped to a nodal cubic.
%		The later condition is satisfied when $X$ is of degree at least $2$.
		\label{theo-corr-2}
		\item The preimage of a $(-2)$-curve under $\widetilde{\pi}:Y\dasharrow X$ is a union of $0,1$ or $p$ $(-2)$-curves.\label{theo-corr-3}

		\item \label{theo-corr-4}The preimage of a $(-1)$-curve under $\pi: T \to S$ is one of the following:
		\begin{enumerate}
			\item a disjoint union of $p$ $(-1)$-curves,
			\item a $p$-star of $(-1)$-curves, or
			\item a single curve with non-negative self-intersection, having at most one singularity, which is either a node or a cusp.
		\end{enumerate}
	\end{enumerate}
\end{theo}

As a corollary, we have:
\begin{coro}\label{coro-corr}
	When $X$ is of degree at least $2$, the correspondence $\Gamma(\pi)$ is a function $\Gamma(Y)\to\Gamma(X)$ and only depends on $\Type(\widetilde{\pi})$.
	In particular, the induced map $\widetilde{\pi}_\ast:\Eff^1(Y)\ra \Eff^1(X)$ sends extremal rays to extremal rays.
\end{coro}

\begin{rema}\label{rema:counterexample}
	On the other hand, none of the conclusions of Corollary \ref{coro-corr} hold in general when $X$ is a weak del Pezzo surface of degree $1$.
	Section \ref{sec:deg1,E6+A2} provides such an example.
	A more straightforward example is the quasi-\'etale cover of type $S_3(A_2)\ra S_1(A_8)$. In this case, there are $15$ $(-1)$-curves on $S_3(A_2)$ but only $2$ $(-1)$-curves on $S_1(A_8)$.
\end{rema}

The following lemma is well-known.
\begin{lemm}\label{lemm-action-on-p1}
	Any finite order automorphism $\sigma$ of $\bP^1$ has two distinct fixed points.
	If $\sigma$ has three distinct fixed points, then it is the identity morphism.\qed
\end{lemm}

We first consider the image of a Galois-invariant curve.
\begin{prop}\label{prop-image-of-curve}
	Let $\pi:T\to S$ be a quasi-étale cover of normal projective surfaces with at worst Du Val singularities.
	Suppose $\pi$ is Galois with $\Gal(T/S)$ is a cyclic group of prime order $p$.
	We use the notations from diagram (\ref{diag-model}).
	Let $C$ be an $(n)$-curve on $T$.
	If $C$ is fixed by $\sigma$, then $\pi(C)$ is a $(k)$-curve with $n=pk+r$ for some $r\geq2$.
\end{prop}
\begin{proof}
	Let $D\coloneqq \pi(C)$ denote the image of $C$.
	The Riemann–Hurwitz formula implies that
	$$g_a(D)\leq g_a(C)=0.$$
	Hence $g_a(D)=0$ and $D$ is a smooth rational curve since it is irreducible.
	
	Since $C$ is fixed by $\sigma$,
	the Galois cover $\pi$ restricts to a Galois cover $\pi|_C:C\ra D$ whose Galois group $\Gal(C/D)$ is generated by $\sigma|_C$ and is isomorphic to $\bZ/p\bZ$.
	By Lemma \ref{lemm-action-on-p1}, the automorphism $\sigma|_C$ of $C$ has exactly two fixed points.
	
	Let $\rho_{T^\prime}:T_0\to T^\prime$ be the minimal resolution of $T^\prime$.
	Let $R$ be the pull back of the ramification locus of $f$ along $\rho_{T^\prime}$.
	By Proposition \ref{prop:model}, $R$ equals the exceptional divisor of $\circ g\circ \rho_{T^\prime}$.
	
	By Lemma \ref{lemm:ramification-of-model},
	every connected component of $R$ is a chain of negative curves whose configuration only depends on the degree $p$ of $\pi$.
	Write $R=\bigsqcup_{i=1}^{N}R_i$ and $r_p$ for the number of irreducible components of $R_i$.
	There are exactly two connected components $R_1$ and $R_2$ of $R$ lies above the two fixed points of $\sigma|_C$ on $C$.
	Let $C_0\subset T_0$ be the strict transform of $C$ along $\rho_T\circ g\circ \rho_{T^\prime}:T_0\ra T$.
	We must have $R_i\cdot C_0=1$ for $i=1,2$ as $R_i\cdot C_0\geq2$ would imply that $C$ is singular, which contradicts the assumption.
	
	The birational morphism $T_0\ra T$ passes a singular surface $T^\prime$ when $p\geq3$.
	However, since $T_0\ra T$ is a birational morphism between smooth surfaces,
	it can be decomposed into a sequence
	$$T_0\ra T_{1,1}\ra\dots\ra T_{1,r_p}=T_1\ra T_{2,1}\ra\dots T_{2,r_p}=T_2\ra\dots\ra T_{N,r_p}=T_N=Y,$$
	where each step is a blowup at a smooth point and $T_{i-1}\ra T_{i}$ precisely contracts the image of $R_i$.
	Let $C_{i,j}$ be the strict transform of $C$ in $T_{i,j}$.
	For $i=1,2$, we have $E_{i,j}\cdot C_{i,j}=0$ or $1$, and let $r_i$ denote the cardinality of the set
	$$\{ 0\leq j\leq r_p-1\mid C_{i,j}\cdot E_{i,j}=1 \}.$$
	The integer $r_i$ depends only on the irreducible component of $R_i$ on which the intersection point of $C_0$ and $R_i$ lies.
	We have $r_i\geq1$ for $i=1,2$.
	The curve $C_0$ has self-intersection number $n-r_1-r_2$.
	The projection formula implies that $D^2=(n-r_1-r_2)/p$.
	Thus $D$ is an $((n-r_1-r_2)/p)$-curve since it is smooth.
\end{proof}

We then consider the image of a Galois-non-invariant curve.
\begin{prop}\label{prop-image-of-curve-2}
	Under the setting of Proposition \ref{prop-image-of-curve}, let $C$ be a $(-1)$-curve on $T$ which is not fixed by $\sigma$.
	If $C\cdot \sigma(C)=0$, then the image $\pi(C)$ is a $(-1)$-curve.
	Otherwise, $C\cdot \sigma(C)>0$ and we have the following assertions.
	When $p=2$, the image $\pi(C)$ is a nodal cubic curve on $S$, and $\pi$ restricts to an \'etale cover $C\cup \sigma(C)\ra \pi(C)$.
	When $p=3$, there are two cases:
	\begin{enumerate}
		\item If $C\cap \sigma(C)\cap\sigma^2(C)=\emptyset$, then $\pi(C)$ is a nodal cubic curve on $S$.
		\item If $C\cap \sigma(C)\cap\sigma^2(C)\neq\emptyset$, then it is a one-point set and $\pi(C)$ is a $(-1)$-curve on $S$ through a singular points of $S$.
	\end{enumerate}
\end{prop}
\begin{proof}
	Since $T$ and $S$ has at worst Du Val singularities, the intersection number of divisors on them can be computed by strict transform to $Y$ and $X$.
	Thus we may assume that $T$ is smooth in $C$.
	Write $D\coloneqq \pi(C)$.
	The Galois cover $\pi:T\ra S$ pulled back to a Galois cover $Z:=\bigcup_i \sigma^i(C)\ra D$ with Galois group $\bZ/p\bZ$.
	
	{\bf Case A.} Suppose $C\cdot \sigma(C)=0$.
	Then $D$ lies on the \'etale locus of $\pi$.
%	and so $\pi$ restricts to an a generically finite cover $C\ra D$.
	By permanence properties of base changes, $Z\ra D$ is a finite \'etale cover.
	Then $C\cdot \sigma(C)=0$ implies that $Z$ is a disjoint union of $(-1)$-curves,
	and thus $D$ is a smooth rational curve.
	Hence $D$ is a $(-1)$-curve by projection formula.
	
	{\bf Case B.} Suppose $C\cdot \sigma(C)\neq0$.
	Then $C\cdot \sigma(C)>0$ since $C$ is not fixed by $\sigma$.
	Let $p\in C\cap \sigma(C)$ be a point.
	It is a smooth point of $T$ by assumption.

	{\bf Case B1.} Suppose $p=2$.
%	Firstly, I assert that $C$ and $\sigma(C)$ intersects transversely at $p$.
%	Indeed, $T_0=T^\prime\ra T$ is a resolution of ramification divisor of $\pi$ (see Remark \ref{rema-resolution-of-ramification}).
%	Let $r$ be the intersection multiplicity of $C$ and $\sigma(C)$ at $p$.
%	Then the strict transform $C_0$ of $C$ in $T_0$ has self-intersection at least $-1-r$.
%	The finite cover $\pi$ is unramified at $C_0$.
%	Thus $f(C_0)$ is a rational curve with self intersection at least $-(1+r)/2$.
%	Since there is no rational curve on $S$ of self-intersection less than $-1$,
%	we conclude that $r=1$.
%	Thus $C$ and $\sigma(C)$ intersects transversely at $p$.
	I assert that $p$ is not fixed by $\sigma$.
	Suppose the opposite.
%	Then resolving the ramification point $p$ will produce an unramified smooth rational curve in $T_0$ with self-intersection at most $-3$.
%	Indeed, 
	Let $E$ be the exceptional $(-1)$-curve in the final step of the embedded resolution of $C\cup \sigma(C)$.
	Then $E$ is fixed but not pointwise fixed by $\sigma$, and there are two fixed points on $E$ not above $C$ and $\sigma(C)$.
	We must blowup these two points to resolve ramification, and this will produce an unramified $(-3)$-curve from $E$.
	Thus there is an unramified $(r)$-curve in $T_0$ with $r\leq -3$,
	which is mapped to an $(r/2)$-curve in $S$.
	But there is no negative curve on a Du Val del Pezzo surface with self-intersection less than $-1$.
	
	Thus $C\cdot \sigma(C)$ is positive and even.
	Since $T$ is a Du Val del Pezzo surface of degree at least $2$, the intersection of two $(-1)$-curves is at most $2$.
	Therefore, $C$ and $\sigma(C)$ intersect transversely at $2$ points, and so $C$ lies in the \'etale locus of $\pi$.
	Their image $D$ in $S$ is a rational curve with self intersection $1$, \ie\, a nodal cubic curve.
	
	{\bf Case B2.} Suppose $p=3$ and there is a fixed point $p$ of $\sigma$ lying on $C$.
	By Lemma \ref{lemm-dP2}, we have $C\cdot\sigma(C)=1$.
	Thus one blow-up at $p$ resolve the ramification,
	with a fixed but not pointwise fixed exceptional $(-1)$-curve $E$.
	Thus further blow up at the two fixed points of $E$ is needed, and thus the strict transform $E_0$ of $E$ in $T_0$ is a $(-3)$-curve (see Lemma \ref{lemm:ramification-of-model}).
	Let $C_0$ be the strict transform of $C$ in $T_0$ and abuse the notation of $D$ with its strict transform in $Y$.
	By the projection formula applied to $\varphi:f\circ\rho_{T^\prime}:T_0\ra X,$
	$$3D^2=(\varphi^\ast D)^2=(C_0+\sigma(C_0)+\sigma^2(C_0)+E_0)^2=-3,$$
	it follows that $D$ is a $(-1)$-curve on $S$ passing through a singular point of $S$.

	{\bf Case B3.} Suppose $p=3$ and there is no fixed point $p$ of $\sigma$ lying on $C$.
	Then $C\cdot\sigma(C)=1$ implies that $Z^2=3$.
	By the projection formula applied to $\pi,$
	$$3D^2=(\varphi^\ast D)^2=(C+\sigma(C)+\sigma^2(C))^2=3,$$
	it follows that $D$ is a nodal cubic curve.
\end{proof}

We finally prove Theorem \ref{theo-corr}.
Although a similar argument applies to the case of $p=5$,
we prefer to present the only type of cover in this case in Example \ref{exam-dP5}.
\begin{proof}[Proof of Theorem \ref{theo-corr}]
	
	Let $C$ be a $(-2)$-curve on $Y$.
	By Proposition \ref{prop:model}, every $(-2)$-curve on $Y$ is unramified.
	Thus either $C$ is fixed by $\sigma$ with two fixed points or $C\cap\sigma(C)=0$.
	In both cases, the projection formula implies that $\pi(C)$ is a $(-2)$-curve.
	Thus (\ref{theo-corr-1}) of the theorem follows.
	
	Let $D$ be a $(-2)$-curve on $X$.
	Let $B_{\red}\subset X$ denote the branch locus of $f$.
	If $D\subset B_{\red}$, then $f^{-1}(D)$ is contracted by $g$ by Proposition \ref{prop:model}.
	Suppose the opposite.
	If $D\cdot B_{\red}=0$, then $f^\ast D$ is a disjoint union of $p$ $(-2)$-curves.
	Suppose $D\cdot B_{\red}>0$.
	Since $(-2)$-curves intersects transversally in the ADE configurations,
	the pullback $f^\ast D$ is a $(-4)$-curve.
	By Lemma \ref{lemm-action-on-p1}, the fixed locus of $\sigma$ on $f^\ast D$ consists of two points.
	Thus $D\cdot B_{\red}=2$, and contracting $B_{\red}$ maps $D$ to a $(-2)$-curve in $Y$. 
	This proves (\ref{theo-corr-3}) of the theorem.
	
	The only possible $\pi$ of degree $5$ is represented in Example \ref{exam-dP5}, from which (\ref{theo-corr-2}) and (\ref{theo-corr-4}) are clear.
	So from now on, we assume that $p=2$ or $3$.
	
	Let $C$ be a $(-1)$-curve on $Y$.
	Suppose $C$ is fixed by $\sigma$, by Proposition \ref{prop-image-of-curve}, we have $-1=pk+r\geq2k+2$, where $\pi(C)$ is a $(k)$-curve.
	Since there every $(k)$-curve on $X$ satisfies $k\geq-1$,
	we get $-1\geq0$, absurd.
	Thus there is no $(-1)$-curve on $Y$ fixed by $\sigma$.
	We then reduce to the setting of Proposition \ref{prop-image-of-curve-2}, from which (\ref{theo-corr-2}) of the theorem follows.
	
	Let $D$ be a $(-1)$-curve on $X$.
	If $D\cdot B_{\red}=0$, then $f^\ast D$ is a disjoint union of $p$ $(-1)$-curves.
	Suppose $D\cdot B_{\red}>0$.
	Then $D\cdot B_{\red}\geq2$ since $B$ is divisible by $p$ in $\Pic(X)$.
	Thus, if $D$ intersects $B_{\red}$ transversely, then $\pi^{-1}(D)$ is irreducible, has non-negative self-intersection, and has at most one singular point, which is either a node or a cusp, by Lemma \ref{lemm-dP2};

	Otherwise, either
	\begin{itemize}
		\item $p=2$ and $D$ is tangent to $B_{\red}$ at a point.
		\item $p=3$ and $D$ intersects $B_{\red}$ at a node of $B_{\red}$.
	\end{itemize}
	In the former case, $\pi^{-1}(D)$ is the union of two smooth conics intersecting at a point with multiplicity $2$;
	In the latter case, $\pi^{-1}(D)$ is a $3$-star of $(-1)$-curves.
	This proves (\ref{theo-corr-4}) of the theorem.
\end{proof}

\begin{exam}\label{exam-dP5}
	Let $T\ra S$ be a quasi-\'etale cover of type $S_5\ra S_1(2A_4)$: a $\bZ/5\bZ$-quotient of the del Pezzo surface of degree $5$.
	The branch locus $B_{\red}$ of $f$ is the union of all $(-2)$-curve on $X$.
	There are six lines on $S$, each intersecting $B_{\mathrm{red}}$ with multiplicity 2.
	A similar argument to the proof of Theorem \ref{theo-corr} shows that the preimages of these six lines are smooth conics,
	and the ten lines on $T$ form two $5$-circles, mapping to two nodal rational curves on $S$ intersecting at one point.
\end{exam}

To understand the Galois actions on  the quasi-étale covers, we study equivariant geometry of them. We introduce a relative version of Cremona isometry defined in \cite[Section 8.2.8]{Dolgachev2012} for quasi-\'etale covers.

\begin{defi}\label{defi:correspondence}
Let $\pi: T \to S$ be a quasi-étale cover of Du Val del Pezzo surfaces, and let $Y$ and $X$ denote the minimal resolutions of $T$ and $S$, respectively. The \textit{group of Cremona isometries} $\Cris(\pi)$ is defined as the subgroup of $\Cris(T) \times \Cris(S)$ consisting of elements $(\tau, \sigma)$ such that $(\tau(C), \sigma(D)) \in \Gamma(\pi)$ for all $(C, D) \in \Gamma(\pi)$.
\end{defi}

This definition depends only on the type of $\pi$ wherever $\Gamma(\pi)$ does,
which holds when $S$ is of degree at least $2$ (Corollary \ref{coro-corr}).
Since all morphisms in (\ref{diag-model}) is $\Gal(\overline{F}/F)$-equivariant, the following corollary is immediate.

\begin{coro}\label{coro:equivariant}
	Let $F$ be the base field of the morphism $\pi:T\ra S$.
	Then the image of $\rho:\Gal(\overline{F}/F)\ra \Cris(S)\times\Cris(T)$ is contained in $\Cris(\pi)$.\qed
\end{coro}
We call the image of $\rho$ in Corollary \ref{coro:equivariant} the \textit{splitting group of $\pi$}.

For a finite group $G$ acting on $\Pic(X)$, write $\rho(X,G)\coloneqq \rk\Pic(X)^G$ for the $G$-invariant Picard rank.
When the realization problem has an affirmative answer, we recover the Picard rank over $F$.
\begin{prop}{\rm (\cite[Proposition 6.2]{derenthal2008nef})}\label{prop:picard rank}
	Let $H_S\times H_T$ denote the image of $\rho$ defined in Corollary \ref{coro:equivariant} and let $\overline{F}$ be an algebraic closure of $F$.
	Suppose that there exists a rational point on the smooth locus of $T$, then $\rho(X)=\rho(X_{\overline{F}},H_S)$ and $\rho(Y)=\rho(Y_{\overline{F}},H_T)$.
\end{prop}

It is sufficient to compute $\rho(X_{\overline{F}},H_S)$ up to conjugacy classes by the following result.
\begin{prop}
	Let $H$ and $H^\prime$ be conjugate subgroups of $\Cris(S)$.
	Then $\rho(X,H)=\rho(X,H^\prime)$.
\end{prop}
\begin{proof}
	By definition, the group $H$ acts on the set of negative curves on $S$.
	Let $P$ be the set of orbits under this action.
	Then $\rho(X,H)$ equals the dimension of the subspace generated by the orbit sums $\sum_{D\in O}D$, where $O\in P$.
	Let $\sigma\in\Cris(S)$ such that $\sigma H\sigma^{-1}=H^\prime$.
	The orbits under the action of $H'$ are $\sigma O$ for $O \in P$.
	Thus, $\rho(X,H')$ equals the dimension of the subspace generated by $\sigma(\sum_{D \in O} D)$, where $O \in P$. 
	Since an invertible matrix preserves the rank of a set of vectors, the assertion follows.
\end{proof}

\subsection{A combinatorial description of Cremona isometries}
For a weighted graph $\Gamma$, we denote by $\Aut(\Gamma)$ the group of permutations of vertices and edges that preserve their weights.
The following observation converts problems in linear algebra to those in combinatorics.
\begin{prop}\label{prop:Cris=Aut}
	Let $X$ be a weak del Pezzo surface.
	Then the groups $\Cris(X)$ and $\Aut(\Gamma(X))$ are isomorphic.
\end{prop}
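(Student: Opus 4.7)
The plan is to construct a natural restriction homomorphism $\Phi : \Cris(X) \to \Aut(\Gamma(X))$ and show that it is bijective. Any Cremona isometry $\sigma$ permutes the extremal rays of $\Eff^1(X)$, hence permutes the numerical classes of extremal curves, and since $\sigma$ preserves the intersection pairing it respects both the weights (self-intersections) and the edge multiplicities (pairwise intersection numbers) of $\Gamma(X)$; this gives a well-defined $\Phi$. Injectivity follows because $\Eff^1(X)$ has nonempty interior, so its extremal rays span $N^1(X)\otimes\bR$, and any Cremona isometry in the kernel fixes a spanning set.

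For surjectivity, given $\phi \in \Aut(\Gamma(X))$, I would choose extremal curves $e_1,\ldots,e_n$ whose classes form an $\bR$-basis of $N^1(X)\otimes\bR$ (possible by the span argument above) and define $\sigma$ by $\sigma(e_i) := \phi(e_i)$, extended $\bR$-linearly. Since $\phi$ preserves the weighted graph, the Gram matrices of $(e_i)$ and $(\phi(e_i))$ agree; non-degeneracy of the intersection pairing then implies that $(\phi(e_i))$ is again a basis and that $\sigma$ is an isometry. To verify that $\sigma$ realizes $\phi$ on every extremal curve $e$, one computes
$$\sigma(e)\cdot\phi(e_j)=\sigma(e)\cdot\sigma(e_j)=e\cdot e_j=\phi(e)\cdot\phi(e_j)$$
for all $j$, so $\sigma(e)-\phi(e)$ is orthogonal to a basis and must vanish.

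The remaining compatibilities are these. Every extremal curve $C$ on a weak del Pezzo surface is a smooth rational curve, so by adjunction $K_X\cdot C=-2-C^2$ depends only on the weight of the vertex $C$; the same orthogonality argument then forces $\sigma(K_X)=K_X$. Since $\sigma$ permutes the extremal rays of $\Eff^1(X)$ and $\Eff^1(X)$ is the convex hull of these rays, $\sigma$ preserves $\Eff^1(X)$. Finally, the fact that the extremal curves generate $N^1(X)$ as a $\bZ$-module on a weak del Pezzo surface (a routine check by degree) shows that $\sigma$ preserves the integer lattice, and so $\sigma\in\Cris(X)$ with $\Phi(\sigma)=\phi$.

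I expect the main obstacle to be the surjectivity step, and in particular the integrality verification inside it: one must confirm that the $\bR$-linear extension of $\phi$ on a basis of extremal curves actually preserves the lattice $N^1(X)$ (not merely the real vector space), which is where one uses the specific generation property of extremal curves on weak del Pezzo surfaces. Everything else reduces cleanly to non-degeneracy of the intersection pairing combined with the observation that the weights of $\Gamma(X)$ alone recover $K_X\cdot C$ for every extremal curve via adjunction.
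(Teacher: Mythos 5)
Your proof takes a genuinely different route from the paper. The paper does not construct anything: for degree $\leq 6$ it invokes Manin's Theorem 23.9 (which identifies the group of lattice isometries fixing $K_X$ with the group of intersection-preserving permutations of $(-1)$-classes) together with Derenthal's fact that every $(-1)$-class is a rational combination of negative curves, and for degree $\geq 7$ it checks a short list by hand. You instead build the inverse map directly: pick a basis of extremal curves, define $\sigma$ by linear extension, and verify isometry, $K_X$-invariance, cone-preservation and integrality from scratch. Your argument is logically sound and has the virtue of being uniform in the degree and self-contained; what the paper's citation to Manin buys is precisely the one step you flag yourself as the sticking point — the statement that the $\bR$-isometry $\sigma$ actually preserves the lattice $N^1(X)$. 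Manin's theorem hands back a \emph{lattice} isometry for free, so the paper never needs to prove that the extremal curves generate $N^1(X)$ over $\bZ$.

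That integrality claim is true, but calling it ``a routine check by degree'' undersells it, and it is exactly the place a careful reader will push. Two remarks that would tighten it. First, for degree $\geq 8$ the Gram matrix of the extremal curves is visibly unimodular (e.g.\ on $\bF_2$ the fibre and negative section have Gram matrix of determinant $-1$), so integrality is immediate there. Second, for degree $\leq 7$ one argues as follows: writing $X \to \bP^2$ as a composition of point blow-ups, each total transform $\cE_i$ of an exceptional divisor is a $\bZ$-combination of the irreducible exceptional curves (all of which are $(-1)$- or $(-2)$-curves since $-K_X$ is nef), and $H$ is the strict transform of a line through two of the centres plus a $\bZ$-combination of the $\cE_i$'s; hence $H, \cE_1, \dots, \cE_r$, and therefore all of $N^1(X)$, lie in the sublattice generated by negative curves. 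Spelling out that paragraph (or citing a reference for it) would close the one real gap. Everything else — the Gram-matrix argument for linear independence and isometry, the orthogonality trick to see $\sigma(e)=\phi(e)$ and $\sigma(K_X)=K_X$ via adjunction, and cone preservation from permuting extremal rays — is correct and complete as written.
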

\begin{proof}
	When $X$ is of degree $d\leq 6$,
	by \cite[Theorem 23.9]{manin1986cubic}, the following two groups are isomorphic:
	\begin{itemize}
		\item the group $A$ of isometries of $N^1 (X)$ that preserve $K_X$, and
		\item the group $B$ of permutations of $(-1)$-classes preserving the pairwise intersection numbers between them.
	\end{itemize}
	The group $\Cris(X)$ consists of the elements in $A$ that preserve effective $(-1)$- and $(-2)$classes,
	and $\Aut(\Gamma(X))$ is the group of permutations of the set of $(-1)$- and $(-2)$-classes that preserve their pairwise intersections.
	
	We first construct a group homomorphism $\Psi:\Aut(\Gamma(X))\ra B$.
	Let $\sigma\in \Aut(\Gamma(X))$ and let $l$ be a $(-1)$-class of $X$.
	Then $l$ is numerically equivalent to a linear combination of $(-1)$-curves and $(-2)$-curves with rational coefficients \cite[Proposition 3.6 and 3.7]{derenthal2008nef}.
	Write $l\equiv \sum c_iC_i+\sum d_jD_j$ where $C_i$ are $(-1)$-curves and $D_i$ are $(-2)$-curves and $c_i,d_j$ are rational numbers.
	Then we define $(\Psi(\sigma))(l)$ to be $\sum c_i\sigma(C_i)+\sum d_j\sigma(D_j)$.
	By linearity, $(\Psi(\sigma))(l)$ is still a $(-1)$-class.
	Since $\Psi(\sigma)$ is invertible by definition, it is a well-defined element of $B$.
	This defines a group homomorphism $\Psi:\Aut(\Gamma(X))\ra B$ by linearity again.
	
	Then, we prove that $\ker(\Psi)=0$.
	Let $\Psi(\sigma)\in B$ fixes each $(-1)$-classes.
	Then it trivially fixes each $(-1)$-curves.
	Suppose there exists a $(-2)$-curve $D$ with $(\Psi(\sigma))(D)\neq D$.
	Since the dual graph $\Gamma(X)$ of negative curves is connected and there exists at least one $(-1)$-curve on $X$, there is a chain of negative curves
	\[\begin{tikzpicture}
		\def \x {1.5};
		\node (01) at (0,0) {$(-2)$};
		\node (02) at (\x,0) {$(-2)$};
		\node (03) at (2*\x,0) {$\cdots$};
		\node (04) at (3*\x,0) {$(-2)$};
		\node (05) at (4*\x,0) {$(-1)$};
		\draw (01)--(02)--(03)--(04)--(05);
	\end{tikzpicture}\]
	such that the leftmost $(-2)$-curve is $D$.
	The linear sum of the chain is a $(-1)$-class, and thus is fixed by $\Psi(\sigma)$.
	This implies that the linear combination of the $(-2)$-curves in the chain is also fixed by $\Psi(\sigma)$.
	Moreover, $(\Psi(\sigma))(D)$ can not be contained in this chain; otherwise, the $(-1)$-curve in the chain would not be fixed by $\Psi(\sigma)$.
	Thus the $(-2)$-curves in the chain are mapped to entirely different $(-2)$-curves by $\Psi(\sigma)$.
	This is absurd since the $(-2)$-curves on $X$ are exceptional divisors of a smooth resolution, and thus are linearly independent by Hodge index theorem.
	
	Now we have shown the following relations of groups:
	$$\Cris(X)\subset A\cong B\supset \Aut(\Gamma(X)).$$
	By definition, the image of $\Cris(X)$ in $B$ is contained in $\Aut(\Gamma(X))$, and the image of $\Aut(\Gamma(X))$ in $A$ is contained in $\Cris(X)$.
	Thus we conclude that $\Cris(X)\cong \Aut(\Gamma(X))$.

	%	Then the assertion follows from the fact that any $(-1)$-class is numerically equivalent to one of the following \cite[III. Th\'eor\`eme 2.c]{demazure1980surfaces}:
	%	\begin{itemize}
		%		\item a $(-1)$-curve, or
		%		\item the sum of a $(-1)$-curve and some $(-2)$-curves, or
		%		\item the sum of $-K_X$ and some $(-2)$-curves.
		%	\end{itemize}
	
	When $X$ is of degree at least $7$, one can check the proposition case-by-case.
%	\begin{itemize}
%		\item $\Cris(X)\cong C_2$ when $X$ is $\bP^2$ or $\bP^1\times\bP^1$;
%		\item $\Cris(X)\cong C_1$ when $X$ is the Hirzebruch surface $\bF_1$ or $\bF_2$.
%	\end{itemize}
\end{proof}

\section{Classification of quasi-\'etale covers}
\label{sec3}
\subsection{Over closed field}\label{sec-quasi-et}

%For any quasi-\'etale cover $\pi:T\ra S$ of Du Val del Pezzo surfaces, the restriction of $\pi$ to the smooth part of $S$ is Galois.
%By Zariski’s Main Theorem in the equivariant setting \cite{GKP16}, we have $\pi$ is Galois as well.

We apply Proposition \ref{prop:fundamental group} and other results from previous sections to determine the type of quasi-\'etale covers for all Du Val del Pezzo surfaces.
The computations are performed using $\mathtt{Magma}$ \cite{magma}.

\begin{theo}\label{theo:classification}
	Let $S$ be a Du Val del Pezzo surface over $\bC$.
	Then, up to isomorphism, quasi-\'etale covers of $S$ are in one-to-one correspondence with the paths in the following diagrams that terminate at $\Type(S)$,
	and the first diagram in each entry represents the corresponding Galois group.
\end{theo}

\begin{enumerate}
	\item $C_3^2$
	\begin{figure}[H]
		\centering
		\begin{subfigure}{0.49\textwidth}
			\centering
			\begin{tikzpicture}[node distance={21pt},->]
				\node (01) at (0,0) {$C_1$};
				\node (11) [above right=-9pt and 18pt of 01] {$C_3$};
				\node (10)[above of=11]   {$C_3$};
				\node (12)[below of=11]   {$C_3$};
				\node (13)[below of=12]   {$C_3$};
				\node (21)[below right=-9pt and 18pt of 11]   {$C_3^2$};
				
				\draw (01)--(10); \draw(01)--(11); \draw(01)--(12); \draw(01)--(13);
				\draw(10)--(21);
				\draw(11)--(21); \draw (12)--(21); \draw (13)--(21);
			\end{tikzpicture}
		\end{subfigure}
		\hfill
		\begin{subfigure}{0.49\textwidth}
			\centering
			\begin{tikzpicture}[node distance={21pt},->]
				\node (01) at (7,1) {$\bP^2$};
				\node (11) [above right=-9pt and 18pt of 01] {$S_3(3A_2)$};
				\node (10)[above of=11]   {$S_3(3A_2)$};
				\node (12)[below of=11]   {$S_3(3A_2)$};
				\node (13)[below of=12]   {$S_3(3A_2)$};
				\node (21)[below right=-9pt and 18pt of 11]   {$S_1(4A_2)$};
				
				\draw (01)--(10); \draw(01)--(11); \draw(01)--(12); \draw(01)--(13);
				\draw(10)--(21);
				\draw(11)--(21); \draw (12)--(21); \draw (13)--(21);
			\end{tikzpicture}
		\end{subfigure}
	\end{figure}

	\item $C_2\times C_4$
	\begin{figure}[H]
		\centering
		\begin{subfigure}{0.49\textwidth}
			\centering
			\begin{tikzpicture}[->]
				\node (01) at (0,1) {$C_1$};
				\node (11) [right=12pt of 01] {$C_2$};
				\node (10)[above of=11]   {$C_2$};
				\node (12)[below of=11]   {$C_2$};
				\node (21)[right=12pt of 11]   {$C_2^2$};
				\node (20) [above of=21] {$C_4$};
				\node (22)[below of=21]   {$C_4$};
				\node (31)[right=12pt of 21]   {$C_2\times C_4$};
				
				\draw (01)--(10); \draw(01)--(11); \draw(01)--(12);
				\draw(10)--(21);
				\draw(11)--(20); \draw (11)--(21); \draw (11)--(22);
				\draw (12)--(21);
				\draw(20)--(31); \draw (21)--(31); \draw (22)--(31);
			\end{tikzpicture}
		\end{subfigure}
		\hfill
		\begin{subfigure}{0.49\textwidth}
			\centering
			\begin{tikzpicture}[->]
				\node (01) at (7,1) {$\bP^1\times \bP^1$};
				\node (11) [right=12pt of 01] {$S_4(4A_1)$};
				\node (10)[above of=11]   {$S_4(4A_1)$};
				\node (12)[below of=11]   {$S_4(4A_1)$};
				\node (21)[right=12pt of 11]   {$S_2(6A_1)$};
				\node (20) [above of=21] {$S_2(2A_3+A_1)$};
				\node (22)[below of=21]   {$S_2(2A_3+A_1)$};
				\node (31)[right=12pt of 21]   {$S_1(2A_3+2A_1)$};
				
				\draw (01)--(10); \draw(01)--(11); \draw(01)--(12);
				\draw(10)--(21);
				\draw(11)--(20); \draw (11)--(21); \draw (11)--(22);
				\draw (12)--(21);
				\draw(20)--(31); \draw (21)--(31); \draw (22)--(31);
			\end{tikzpicture}
		\end{subfigure}
	\end{figure}

	\item $C_6$
	\begin{figure}[H]
		\centering
		\begin{subfigure}{0.49\textwidth}
			\centering
			\begin{tikzpicture}[->]
				
				\def\x{-1}
				\def\y{1}
				\node (01) at (\x,\y) {$C_1$};
				\node (10)  at (\x+1,\y+1)  {$C_2$};
				\node (12)  at (\x+1,\y-1) {$C_3$};
				\node (21)  at (\x+2,\y) {$C_6$};
				
				\draw (01)--(10); \draw(01)--(12);
				\draw(10)--(21);
				\draw(10)--(21); \draw (12)--(21);
			\end{tikzpicture}
		\end{subfigure}
		\hfill
		\begin{subfigure}{0.49\textwidth}
			\centering
			\begin{tikzpicture}[->]
						
				\def\x{0}
				\def\y{0}
				\node (01) at (\x,\y) {$S_6$};
				\node (10)  at (\x+1.5,\y+1)  {$S_3(4A_1)$};
				\node (12)  at (\x+1.5,\y-1) {$S_2(3A_2)$};
				\node (21)  at (\x+3,\y) {$S_1(A_5+A_2+A_1)$};
				
				\draw (01)--(10); \draw(01)--(12);
				\draw(10)--(21);
				\draw(10)--(21); \draw (12)--(21);
			\end{tikzpicture}
		\end{subfigure}
	\end{figure}

	\item $C_5$
	\[
	\begin{tabular}{ccc}
		$C_1\ra C_5$\hspace{5.7cm}$ S_5\ra S_1(2A_4)$
	\end{tabular}\]

	\item $C_2^2$
		\begin{figure}[H]
		\centering
		\begin{subfigure}{0.49\textwidth}
			\centering
			\begin{tikzpicture}[->]
				\node (01) at (-1,1) {$C_1$};
				\node (11) [right=12pt of 01] {$C_2$};
				\node (10)[above of=11]   {$C_2$};
				\node (12)[below of=11]   {$C_2$};
				\node (21)[right=12pt of 11]   {$C_2^2$};
				
				\draw (01)--(10); \draw(01)--(11); \draw(01)--(12);
				\draw(10)--(21);
				\draw(10)--(21); \draw (11)--(21); \draw (12)--(21);
			\end{tikzpicture}
		\end{subfigure}
		\hfill
		\begin{subfigure}{0.49\textwidth}
			\centering
			\begin{tikzpicture}[->]
				\node (01) at (6,1) {$S_4(A_3(4l))$};
				\node (11) [right=12pt of 01] {$S_2(A_7)$};
				\node (10)[above of=11]   {$S_2(D_4+2A_1)$};
				\node (12)[below of=11]   {$S_2(D_4+2A_1)$};
				\node (21)[right=12pt of 11]   {$S_1(D_6+2A_1)$};
				
				\draw (01)--(10); \draw(01)--(11); \draw(01)--(12);
				\draw(10)--(21);
				\draw(10)--(21); \draw (11)--(21); \draw (12)--(21);
			\end{tikzpicture}
		\end{subfigure}
	\end{figure}

\begin{figure}[H]
	\centering
	\begin{subfigure}{0.49\textwidth}
		\centering
		\begin{tikzpicture}[->]
			\node (01) at (-3,-2) {$S_4(A_1)$};
		\node (11) [right=12pt of 01] {$S_2(A_3+2A_1(12l))$};
		\node (10)[above of=11]   {$S_2(A_3+2A_1(12l))$};
		\node (12)[below of=11]   {$S_2(A_3+2A_1(12l))$};
		\node (21)[right=12pt of 11]   {$S_1(D_4+3A_1)$};
		
		\draw (01)--(10); \draw(01)--(11); \draw(01)--(12);
		\draw(10)--(21);
		\draw(10)--(21); \draw (11)--(21); \draw (12)--(21);
		\end{tikzpicture}
	\end{subfigure}
	\hfill
	\begin{subfigure}{0.49\textwidth}
		\centering
		\begin{tikzpicture}[->]
			\node (01) at (6,-2) {$S_4$};
			\node (11) [right=12pt of 01] {$S_2(4A_1(20l))$};
			\node (10)[above of=11]   {$S_2(4A_1(20l))$};
			\node (12)[below of=11]   {$S_2(4A_1(20l))$};
			\node (21)[right=12pt of 11]   {$S_1(6A_1)$};
			
			\draw (01)--(10); \draw(01)--(11); \draw(01)--(12);
			\draw(10)--(21);
			\draw(10)--(21); \draw (11)--(21); \draw (12)--(21);
		\end{tikzpicture}
	\end{subfigure}
\end{figure}

				\begin{figure}[H]
			\centering
			\begin{subfigure}{0.49\textwidth}
				\centering
				\begin{tikzpicture}[->]
							\node (01) at (-2,-5) {$S_4(2A_1(8l))$};
					\node (11) [right=12pt of 01] {$S_2(2A_3)$};
					\node (10)[above of=11]   {$S_2(5A_1)$};
					\node (12)[below of=11]   {$S_2(5A_1)$};
					\node (21)[right=12pt of 11]   {$S_1(A_3+4A_1)$};
					
					\draw (01)--(10); \draw(01)--(11); \draw(01)--(12);
					\draw(10)--(21);
					\draw(10)--(21); \draw (11)--(21); \draw (12)--(21);
				\end{tikzpicture}
			\end{subfigure}
			\hfill
			\begin{subfigure}{0.49\textwidth}
				\centering
				\begin{tikzpicture}[->]
							\node (01) at (0,0) {$S_4(2A_1(8l))$};
					\node (11) [right=12pt of 01] {$S_2(2A_3)$};
					\node (10)[above of=11]   {$S_2(2A_3)$};
					\node (12)[below of=11]   {$S_2(2A_3)$};
					\node (21)[right=12pt of 11]   {$S_1(2D_4)$};
					
					\draw (01)--(10); \draw(01)--(11); \draw(01)--(12);
					\draw(10)--(21);
					\draw(10)--(21); \draw (11)--(21); \draw (12)--(21);
				\end{tikzpicture}
			\end{subfigure}
		\end{figure}

				\begin{figure}[H]
			\centering
			\begin{subfigure}{0.49\textwidth}
				\centering
				\begin{tikzpicture}[->]
							\node (01) at (0,0) {$S_8(A_1)$};
					\node (11) [right=12pt of 01] {$S_4(A_3+2A_1)$};
					\node (10)[above of=11]   {$S_4(A_3+2A_1)$};
					\node (12)[below of=11]   {$S_4(A_3+2A_1)$};
					\node (21)[right=12pt of 11]   {$S_2(D_4+3A_1)$};
					
					\draw (01)--(10); \draw(01)--(11); \draw(01)--(12);
					\draw(10)--(21);
					\draw(10)--(21); \draw (11)--(21); \draw (12)--(21);
					
%					\node (01) at (14,0) {};
				\end{tikzpicture}
			\end{subfigure}
			\hfill
			\begin{subfigure}{0.49\textwidth}
				\centering
			\end{subfigure}
		\end{figure}

	\item $C_4$
\begin{table}[H]
	\tabcolsep=0.5cm
	\begin{tabular}{ll}
		$C_1\ra C_2\ra C_4$&$S_4\ra S_2(4A_1(20l))\ra S_1(2A_3+A_1)$\\
		$S_4(A_2)\ra S_2(A_5+A_1(6l))\ra S_1(D_5+A_3)$&
		$S_4\ra S_2(A_3+2A_1(12l))\ra A_1(A_7+A_1)$
%		$S_4(4A_1)\ra S_2(6A_1)\ra S_1(2A_3+2A_1)$
	\end{tabular}
	\end{table}

	\item $C_3$
	\begin{table}[H]
		\tabcolsep=1cm
		\begin{tabular}{ll}
			$C_1\ra C_3$&$S_3(3A_1)\ra S_1(3A_2+A_1)$\\
			$ S_3(A_1)\ra S_1(A_5+A_2)$&$S_3(D_4)\ra S_1(E_6+A_2)$\\
			$S_3\ra S_1(3A_2)$&$S_3(A_2)\ra S_1(A_8)$\\
			$S_6(A_1)\ra S_2(A_5+A_2)$&\\
			%repeated
%			$S_3(3A_2)\ra S_1(4A_2)$$S_6\ra S_2(3A_2)$
		\end{tabular}
	\end{table}
	\item $C_2$
	\begin{table}[H]
		\tabcolsep=1cm
		\begin{tabular}{ll}
			$C_1\ra C_2$&$S_2(A_2)\ra S_1(A_5+A_1(21l))$\\
			$S_2(E_6)\ra S_1(E_7+A_1)$&$S_2(2A_2+A_1)\ra S_1(A_3+A_2+2A_1)$\\
			$S_2(A_1)\ra S_1(A_3+2A_1(44l))$&$S_2(A_2+2A_1)\ra S_1(A_5+2A_1)$\\
			$S_2(A_3)\ra S_1(D_4+2A_1)$&$S_2(A_5(8l))\ra S_1(D_5+2A_1)$\\
			$S_2(3A_1(26l))\ra S_1(A_3+3A_1)$&$S_2(2A_2)\ra S_1(A_2+4A_1)$\\
			$S_2(2A_1)\ra S_1(5A_1)$&$S_2(A_3+A_1(16l))\ra S_1(D_4+A_3)$\\
			$S_2(2A_1)\ra S_1(2A_3(23l))$&$S_2(A_3)\ra S_1(A_7(8l))$\\
			$S_2(D_5)\ra S_1(D_8)$&$S_2\ra S_1(4A_1(77l))$\\
			$S_4(3A_1)\ra S_2(A_3+3A_1)$&$S_4(D_4)\ra S_2(D_6+A_1)$\\
			$S_6(A_1(4l))\ra S_3(A_3+2A_1)$&$S_6(A_2)\ra S_3(A_5+A_1)$
			
			%repeated
%			$S_2(2A_3+A_1)\ra S$
		\end{tabular}
	\end{table}
\end{enumerate}

\begin{rema}
	The singularity type of the universal cover of relative minimal Du Val del Pezzo surfaces was determined in \cite{MZ88,MZ93}.
	After the completion of an earlier draft of this paper, we learned from J\'anos Koll\'ar that our list in Theorem \ref{theo:classification} has an overlap with Table $2$ of \cite{Kol2009}, where the universal cover of relative minimal Du Val del Pezzo surfaces with cyclic quotient singularities is listed.
\end{rema}

\subsection{Quasi-\'etale covers with equal or higher $b$-invariants}

Let $\pi: T \to S$ be a quasi-étale cover of Du Val del Pezzo surfaces, and let $Y$ and $X$ be the minimal resolutions of $T$ and $S$, respectively. Classifying $\pi$ with $\rho(Y) \geq \rho(X)$ reduces to classifying subgroups $H = H_T \times H_S \subseteq \Cris(\pi)$ such that $\rho(Y, H_T) \geq \rho(X, H_S)$, up to conjugation, by the results in Section \ref{sec:corr}.

We use \texttt{Magma} \cite{magma} to compute $\Cris(X)$ and its subgroups up to conjugacy for all types of $X$. The source code and outputs are available on GitHub \cite{github}.
In the following, we only present partial results for $S$ of degree $1$ due to the large number of cases.

\begin{prop}\label{prop:classification2}
	Let $\pi:T\ra S$ be a quasi-\'etale cover of Du Val del Pezzo surfaces such that $S$ is of degree $2$, or of degree $1$ and with at most $4$ lines.
	There exists a subgroup $H=H_T\times H_S\subseteq\Cris(\pi)$ such that $\rho(Y,H_T)\geq\rho(X,H_S)$ (\resp~$\rho(Y,H_T)>\rho(X,H_S)$) if and only if the column $(\geq)$ (\resp~$(>)$) is YES in Table \ref{bigtable}.\qed
\end{prop}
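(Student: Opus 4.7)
The plan is to carry out a finite case-by-case verification, reducing the statement to a purely combinatorial computation on the dual graphs of negative curves and their automorphism groups. First, using Proposition \ref{prop:classification}, I would enumerate every nontrivial quasi-\'etale cover $\pi : T \to S$ for which $S$ has the prescribed degree and line-count restriction. For each such cover, the singularity types of both $T$ and $S$ are fixed, so by Proposition \ref{prop:Cris=Aut} the groups $\Cris(S)$ and $\Cris(T)$ are identified with $\Aut(\Gamma(S))$ and $\Aut(\Gamma(T))$, which can be computed directly from the tables in \cite{Dolgachev2012}.

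Next, I would determine $\Cris(\pi)$ explicitly. This is the step that requires the most care: by Definition \ref{defi:correspondence} a pair $(\sigma,\tau) \in \Cris(S) \times \Cris(T)$ lies in $\Cris(\pi)$ precisely when the induced action is compatible with the assignment $C \mapsto f_\ast g_\ast^{-1} C$ on extremal curves. To make this assignment explicit, I would combine Proposition \ref{prop:model} with Lemmas \ref{lemm:pushforward of curve} and \ref{lemm:correspondence}: these tell me which $(-1)$-curves on $\tS$ pull back to a disjoint union of $(-1)$-curves on $\tT$, which $(-2)$-curves split or stay irreducible, and which $(-1)$-curves on $\tT$ get pushed to $(-1)$-curves versus $(r{-}2)$-curves downstairs. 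From this correspondence I build a bipartite-type incidence structure between negative curves on $\tS$ and $\tT$, and $\Cris(\pi)$ is then the subgroup of pairs preserving this structure.

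Once $\Cris(\pi)$ is in hand, I would enumerate its subgroups $H = H_T \times H_S$ up to conjugacy. Thanks to the preceding proposition showing that $\rho(\tX,H)$ depends only on the conjugacy class of $H$, this is a finite check. For each class, $\rho(\tS,H_S)$ (resp.\ $\rho(\tT,H_T)$) is computed as the number of $H_S$-orbits (resp.\ $H_T$-orbits) on the set of extremal curves whose orbit-sums remain linearly independent, which by the standard argument equals the dimension of the $H$-invariant subspace of $N^1$. Comparing these two integers then produces a YES/NO entry for both columns of Table \ref{bigtable}, and the equivalence in the statement is established by exhaustion.

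The hard part will be the bookkeeping of $\Cris(\pi)$ in the cases where $S$ has small singularities but many lines (for example the degree $2$ types with $20$ lines, or degree $1$ types with several $A_1$'s plus a large component), where $\Cris(\tS)$ can be a large wreath or Weyl-type group and the correspondence of extremal curves must be tracked by hand or by computer algebra. For the degree $1$ cases with more than $4$ lines the combinatorics explodes, which is exactly why the statement restricts to surfaces with at most $4$ lines; the remaining cases are handled by the data accompanying the paper on \cite{github}. Throughout, the numerical data for Picard groups, root sublattices and effective cones that feed into these computations are exactly those already assembled for Proposition \ref{prop:classification}, so the proof reduces to running the invariant-rank computation on each cover in the list and recording the outcome in Table \ref{bigtable}.
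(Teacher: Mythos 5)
Your proposal follows the same route as the paper: enumerate the covers via Proposition~\ref{prop:classification}, compute $\Cris(S)$ and $\Cris(T)$ as automorphism groups of the dual graphs by Proposition~\ref{prop:Cris=Aut}, track the correspondence of extremal curves induced by $f_\ast g_\ast^{-1}$ using Proposition~\ref{prop:model} and Lemma~\ref{lemm:correspondence}, and then exhaustively compare $\rho(\tT,H_T)$ against $\rho(\tS,H_S)$ over conjugacy classes of compatible subgroups. The only procedural difference is that the paper delegates the combinatorial bookkeeping to Magma with the data archived on GitHub, while you leave the tool unspecified; the underlying finite verification is identical.
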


In Appendix \ref{sec:app}, we list all groups $H_S\subset \Cris(X)$ satisfying the conditions in Proposition \ref{prop:classification2} up to conjugacy.

\subsection{Descend to non-closed fields: from top to bottom}\label{sec:descend1}
Let $F$ be a field of characteristic $0$.
In this section, we study the existence of quasi-\'etale quotients of a fixed surface over $F$.
\begin{prop}\label{prop-linearized}
	Let $\pi:T\ra S$ be a quasi-\'etale cover of Du Val del Pezzo surfaces over a field $F$ of characteristic $0$.
	Write $G\coloneqq\Aut(T/S)$.
	Using the notations from (\ref{diag-model}), the following assertions hold.
	\begin{enumerate}
		\item The pullbacks $\pi^\ast:\Pic(S)_\bQ\hookrightarrow \Pic(T)_\bQ$ and $f^\ast:\Pic(X)_\bQ\hookrightarrow Pic(T^\prime)_\bQ$ are injective;
		\item We have $\rho(T)^G=\rho(S)$ and $\rho(T^\prime)^G=\rho(X)$. \label{item2}
	\end{enumerate}
\end{prop}
\begin{proof}
	For a finite group $G$ and a $G$-variety $T$,
	we have the long exact sequence
	\begin{equation}\label{eq-linearizable}
		0\to \Hom(G,F^\times)\to \Pic_G(T)\to \Pic(T)^G\to H^2(G,F^\times)\to\cdots,
	\end{equation}
	where $\Pic_G(T)$ is the group of $G$-linearized line bundles.
	Since the terms at both ends of (\ref{eq-linearizable}) are torsion, (\ref{eq-linearizable}) induces an isomorphism $\Pic_G(T)_\bQ\to \Pic(T)^G_\bQ$.
	Suppose the GIT quotient $S:=T\sslash G$ exists.
	We have an injection $\Pic(S)\hookrightarrow \Pic_G(T)$ which is an isomorphism after tensoring by $\bQ$ \cite[Proposition 4.2]{knop1989picard}.
	We thus have
	$$\Pic(S)_\bQ\cong \Pic_G(T)_\bQ\cong \Pic(T)^G_\bQ\hookrightarrow \Pic(T)_\bQ.$$
	Applying the above arguments to $\pi:T\ra S$ and $f:T^\prime\to X$ in the proposition, we complete the proof.
\end{proof}

We can reduce the realization problem of subgroups of $\Cris(\pi)$ to the existence of a certain $G$-variety.
For instance, let $T$ be a del Pezzo surface of degree $6$ over $F$ and $H\subseteq \fS_4$ be a subgroup.
Then the following assertions are equivalent.
\begin{itemize}
	\item There exists a quasi-\'etale cover of type $S_6\to S_3(4A_1)$ with $\Gal(\overline{F}/F)$ permutes the four $A_1$-points as $H$.
	\item There exists a $C_2$-action on $T$ such that the opposite lines are swapped and that the four fixed points are permuted by $\Gal(\overline{F}/F)$ as $H$.
\end{itemize}
From this perspective, one can also compute $\rho(X)$ quickly as follows.

\begin{exam}
	Let $\pi:T\to S$ be a quasi-\'etale cover of type $S_6\to S_3(4A_1)$, given by the action of $G=C_2$ on $T$.
	Suppose $T$ is split.
	Then $\rho(Y)=\rho(T)=4$ and $\rho(S)=\rho(T)^G=3$ by Proposition \ref{prop-linearized}.
	Suppose further that the four fixed points of $G$ forms a single Galois orbit.
	Then blowing up a closed point of degree $4$ resolves $S$ and $\rho(X)=\rho(S)+1=4$.
\end{exam}

\subsection{Descend to non-closed fields: from bottom to top}\label{sec:descend}
Let $F$ be a field of characteristic $0$.
In this chapter we study the existence of quasi-\'etale covers over a fixed surface over $F$.
%
%First of all, even over a closed field, the morphism $\Aut(X)\ra \Cris(X)$ may not be surjective.
%The automorphism group of Du Val del Pezzo surfaces of certain types was recently determined in \cite{virin2023automorphisms}.

We adopt the language of finite descent theory developed by Harpaz and Wittenberg \cite{harpaz2024supersolvable}.
Let $X$ be the smooth locus of a Du Val del Pezzo surface $X^\prime$ over $F$.
Not every type $\Type(\pi)$ can be realized over $X$.
A \textit{finite descent type} on $X$ is an irreducible finite \'etale $X_{\overline{F}}$-scheme $\overline{Y}$ that is Galois over $X$.
Let $\overline{G}$ be a quotient group of $\pi_1^\et(X_{\overline{F}})$.
The isomorphism classes of finite descent type of $\overline{G}$-torsors over $X$ is parameterized by the set
\begin{equation}\label{eq:finite descent type}
	H^0(F,H^1_{\et}(X_{\overline{F}},\overline{G})).
\end{equation}
A basic fact from \'etale cohomology theory is that
$$H^1_{\et}(X_{\overline{F}},\overline{G})\cong \Hom_{\cont}(\pi_1^\et(X_{\overline{F}}),\overline{G})/\sim,$$
where the right-hand-side is the set of conjugate classes of subgroups $\overline{H}$ of $\pi_1^\et(X_{\overline{F}})$ such that $\pi_1^\et(X_{\overline{F}})/\overline{H}\cong\overline{G}$.
Thus the size of the set (\ref{eq:finite descent type}) is determined by the splitting type of $X^\prime$.
For example, when $X^\prime$ is of type $S_1(4A_2)$,
the set (\ref{eq:finite descent type}) consists of $n$ elements, where $n$ is the number of $A_2$-points in $X^\prime$ that is defined over $k$.

Let $\overline{Y}$ be a finite descent type over $X$ with $\Aut(\overline{Y}/X_{\overline{F}})=\overline{G}$.
Since $\overline{G}$ is abelian, it canonically descents to a finite \'etale group scheme $G$ over $F$.
By \cite[Proposition 2.3]{harpaz2024supersolvable}, the isomorphism classes of torsor of type $\overline{Y}$ correspond bijectively to splittings of the short exact sequence
\begin{equation}\label{eq:short exact sequence}
	1\to \Aut(\overline{Y}/X_{\overline{F}})\to \Aut(\overline{Y}/X)\to \Gal(\overline{F}/F)\to 1,
\end{equation}
up to conjugation.
When $X(k) \neq \emptyset$, a splitting of (\ref{eq:short exact sequence}) exists, and group cohomology theory shows that the splittings are in turn parametrised by $$H^1(\Gal(\overline{F}/F),\Aut(\overline{Y}/X_{\overline{F}}))=H^1(F,G).$$
Thus, taking the neutral element of $H^1(F,G)$, we can get a torsor of type $\overline{Y}$ with trivial Galois action on the vertical part $\Pic(T^\prime_{\overline{F}})_\bQ/f^\ast\Pic(X_{\overline{F}})_\bQ$ of Picard groups.
This shows the following proposition.

\begin{prop}\label{prop-descend}
	We use the notations from (\ref{diag-model}).
	Let $S$ be a Du Val del Pezzo surface over $F$ and let $\overline{T}$ be a finite descent type over $S_{\sm}$.
	Suppose $S_{\sm}(F)\neq\emptyset$.
	Then there exists a quasi-\'etale cover $\pi:T\ra S$ of type $\overline{T}$ such that
	$\Gal(\overline{F}/F)$ acts trivially on the quotient group $\Pic(T^\prime_{\overline{F}})_\bQ/f^\ast\Pic(X_{\overline{F}})_\bQ$.
	In particular, the $b$-invariant of $T$ reaches the maximum in the sense that  $$\rho(Y)=\max\{ \rho(Y,H_T)\mid H_T \text{ satisfies } H_S\times H_T\in\Cris(\pi) \},$$
	where $H_S$ is the splitting group of $S$.\qed
\end{prop}

\section{Examples}\label{sec:examples}
In this section, we describe precise examples with Zariski dense exceptional sets. Throughout this section, we denote by $X$ and $Y$ the minimal resolutions of the surfaces $S$ and $T$, respectively.
%These examples are interesting in different aspects.
%The surface $S$ in Section \ref{sec:deg3,4A1} is the unique singular cubic surface with a Zariski dense exceptional set up to a twist over $\bC$.
%The Cox rings of the surfaces $S$ in Section \ref{sec:deg2,D4+3A1} and \ref{sec:deg1,E6+A2} are defined by only one relation (see \cite{Derenthal2013}).
%Section \ref{sec:deg2,3A2} and \ref{sec:deg1,E6+A2} provide examples where the $b$-invariants are violated by the quasi-\'etale covers.

\subsection{Degree $3$, type $4A_1$}\label{sec:deg3,4A1}
\begin{nota}\label{nota: deg3, type 4A1}
	Let $S$ be the singular cubic surface in $\bP^3$ defined by the equation
	$$X^3 + 2XYW + XZ^2 -Y^2Z + ZW^2=0$$
	%	$$4W^3-(Z^2-4XY)W-(X^2+Y^2)Z=0$$
	over $\bQ$. 
\end{nota}
\begin{prop}\label{prop:deg3, type 4A1}
	Under Notation \ref{nota: deg3, type 4A1}, the following assertions hold.
	\begin{enumerate}
		\item $S$ is a cubic surface with four $A_1$ singularities (a twist form of the Cayley cubic).
		\item We have that $\rho(X)=4$.
		\item There exists a quasi-\'etale $T\ra S$ defined over $\bQ$ satisfying $\rho(Y)=4$.
	\end{enumerate}
\end{prop}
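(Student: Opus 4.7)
\medskip

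\noindent\textbf{Proof proposal.}
For part (1), I would compute the four partial derivatives of the defining polynomial, eliminate variables, and verify that the singular locus consists of exactly four points in $\bP^3_{\bar\bQ}$. At each such point I would compute the Hessian of a local affine equation and check that it has rank $3$, so that the singularity is an ordinary double point of type $A_1$. A cubic surface with exactly four $A_1$ singularities is geometrically a Cayley cubic by the classical Schl\"afli--Cayley classification of singular cubic surfaces.

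For part (2), Proposition~\ref{prop:Cris=Aut} applied to the graph $\Gamma(\tS_{\bar\bQ})$ drawn in Section~\ref{deg3,4A1} identifies $\Cris(\tS_{\bar\bQ})$ with $\fS_4$. By Corollary~\ref{coro:equivariant} and the proposition stated immediately after it, we have $\rho(\tS) = \rho(\tS_{\bar\bQ}, H_S)$, where $H_S \subseteq \fS_4$ is the image of $\Gal(\bar\bQ/\bQ)$. To identify $H_S$, I would compute how $\Gal(\bar\bQ/\bQ)$ permutes the four singular points of $S$ found in part (1) and the nine $(-1)$-curves of $\tS_{\bar\bQ}$; the goal is to match these permutations with the Klein four subgroup $C_2^2(2)$ of $\fS_4$ in Table~\ref{tab-deg3,4A1}, which yields $\rho(\tS) = 4$.

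For part (3), Proposition~\ref{prop:fundamental group} together with item $(3)$ of Proposition~\ref{prop:classification} shows that $\piet(S_{\sm, \bar\bQ}) \cong C_2$, so there is, up to isomorphism, a unique nontrivial quasi-\'etale cover $T_{\bar\bQ} \to S_{\bar\bQ}$, and $\tT_{\bar\bQ}$ is a smooth del Pezzo surface of degree $6$. To produce a $\bQ$-form, my plan is to realize $S$ as a cubic symmetroid by finding a symmetric $3 \times 3$ matrix $M$ of linear forms in $X, Y, Z, W$ with rational coefficients such that $\det M$ is proportional to the defining polynomial of $S$. The incidence variety
\[ T := \{([p], [v]) \in S \times \bP^2_\bQ : M(p) \cdot v = 0\} \]
is then a smooth projective $\bQ$-surface, and the first projection $T \to S$ is a finite morphism of degree $2$ \'etale away from the four nodes of $S$---in other words, a $\bQ$-rational quasi-\'etale cover. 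To conclude $\rho(\tT) = 4$, I would check that each of the six $(-1)$-curves of $\tT_{\bar\bQ}$ descends to $\bQ$, which forces the Galois image in $\Cris(\tT_{\bar\bQ})$ to be trivial and identifies the pair $(H_S, H_T)$ as the row $(C_2^2(2), C_1)$ of Table~\ref{tab-deg3,4A1}. The main technical obstacle is producing the symmetroid presentation of $S$ explicitly over $\bQ$ (equivalently, a Galois-invariant $2$-torsion element of $\Pic(\tS_{\bar\bQ})/\bfE$); this should become tractable once the explicit description of $\Pic(\tS_{\bar\bQ})$ and its Galois action from part (2) is in hand.
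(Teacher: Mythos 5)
Your treatment of parts (1) and (2) tracks the paper's proof closely: the paper also proceeds by listing the four singular points explicitly (in terms of $\zeta_8$), observing that $\Gal(\overline{\bQ}/\bQ)$ permutes them by $C_2^2(2)\subset\fS_4$, and reading off $\rho(\tS)=4$ from Table~\ref{tab-deg3,4A1}. One small refinement the paper adds, which you would want too, is the remark that the action must be $C_2^2(2)$ rather than $C_2^2(1)$ because the latter would not fix the lines $E_5,E_9,E_{12}$; merely knowing how the nodes are permuted does not distinguish the two conjugacy classes of Klein four subgroups without also looking at the $(-1)$-curves.

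For part (3) your route is genuinely different, and the difference matters. The paper does not build the cover explicitly at all: it uses the homotopy exact sequence for $\piet(S_\sm)$ (possible because $S$ has a smooth $\bQ$-point) to descend $\overline{T}\to\overline{S}$ to a $\bQ$-cover, and then \emph{twists} the result. The twisting step is essential and is justified by \cite{corn2005pezzo}: because every automorphism of $\Gamma(T)$ is realized in $\Aut(T)$ for a degree-$6$ del Pezzo surface, one can modify the cover by a class in $H^1(\Gal(\overline{\bQ}/\bQ),\mathbf{Aut}(T/S))$ to kill the Galois action on $\Gamma(\tT_{\overline{\bQ}})$ and achieve $\rho(\tT)=4$. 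Your symmetroid construction, if carried out, would indeed produce a $\bQ$-rational quasi-\'etale double cover of $S$, but it produces \emph{one particular} such cover, and there is no a priori reason this one should have trivial Galois image in $\Cris(\tT_{\overline{\bQ}})$. By Proposition~\ref{prop:twist}, for the fixed $H_S=C_2^2(2)$ there is only one $H_T$ with $\rho(\tT,H_T)\geq\rho(\tS,H_S)$, namely $H_T=C_1$; other twists of $T$ will give strictly smaller $\rho(\tT)$. Your plan to ``check that each of the six $(-1)$-curves descends'' is a verification, not an argument: if the symmetroid you happen to write down gives the wrong twist, the proof fails, and you have not explained how to adjust. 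The clean fix is precisely the paper's: once you know some $\bQ$-form of the cover exists, appeal to the realization of all graph automorphisms by automorphisms of $T$ to twist into the desired form. Either route works, but without the twisting argument your proposal is incomplete; and you should also note you have only \emph{planned}, not produced, the symmetroid presentation over $\bQ$.
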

\begin{proof}
	The four singular points on $S$ are
	\begin{align*}
		\Sing(S)=\{(1 : \zeta_8 : \zeta_8^2 : \zeta_8^3),(1 :\zeta_8^3 : \zeta_8^6 : \zeta_8),
		(1 : \zeta_8^5 : \zeta_8^2 : \zeta_8^7), (1 : \zeta_8^7 : \zeta_8^6 : \zeta_8^5)\}.
	\end{align*}
	%	\begin{align*}
		%		\Sing S=\{ (-\zeta_8^3 - \zeta_8 : -\zeta_8^3 - \zeta_8 : 2 : 1), (-\zeta_8^3 + \zeta_8 : \zeta_8^3 - \zeta_8 : -2 : 1), \\
		%		(\zeta_8^3 - \zeta_8 : -\zeta_8^3
		%		+ \zeta_8 : -2 : 1), (\zeta_8^3 + \zeta_8 : \zeta_8^3 + \zeta_8 : 2 : 1) \}.
		%	\end{align*}
	The Galois group $\Gal(\overline{\bQ}/\bQ)$ acts on the set $\Sing(S)$ by $C_2^2(2)\subset \fS_4$ in the notation of Section \ref{deg3,4A1} (\ie~the unique normal subgroup of $\fS_4$ which is isomorphic to $C_2^2$).
	Thus $\rho(S)=4$ by Table \ref{tab-deg3,4A1}.
	Indeed, if the action was $C_2^2(1)$, then the induced action on $\Gamma(X)$ would not fix the three lines $E_5,E_9$, and $E_{12}$.
%	Since there exists a smooth rational point on $S$, the homotopy exact sequence of \'etale fundamental groups implies that the induced quasi-\'etale cover $\overline{T}\ra \overline{S}$ descends to a cover $T\ra S$ over $\bQ$.
%	After possibly a twisting, we may assume that the Galois action on $\Gamma(T)$ is trivial since any automorphisms on $\Gamma(T)$ can be realized as $\Aut(T)$ \cite{corn2005pezzo}.
	Since $S$ admits a smooth rational point, the arguments of Section \ref{sec:descend} imply the existence of a quasi-\'etale cover $T\ra S$, where $T$ reaches the maximal $b$-invariant of $4$ as recorded in Table \ref{tab-deg3,4A1}.
\end{proof}

\begin{exam}[Construction of the example]
	It is well-known that the Cayley cubic surface can be written as a hypersurface $S_0$ in $\bP^3_\bQ$ defined by the equation 
	$$\frac{1}{x}+\frac{1}{y}+\frac{1}{y}+\frac{1}{z}=1.$$
	The surface $S_0$ is split over $\bQ$, and the singular points are
	$$\Sing(S_0)=\{ (1:0:0:0),(0:1:0:0),(0:0:1:0),(0:0:0:1) \}.$$
	Pick a number field $F$ such that $[F:\bQ]=4$ and $\Gal(F/\bQ)$ acts on the four roots $\{ a_0,a_1,a_2,a_3 \}$ of the minimal polynomial by $C_2^2(2)\subset \fS_4$.
	Let
	$$M:=\left(
	\begin{array}{cccc}
		1&1&1&1\\
		a_0&a_1&a_2&a_3\\
		a_0^2&a_1^2&a_2^2&a_3^2\\
		a_0^3&a_1^3&a_2^3&a_3^3
	\end{array}
	\right)$$
	be the Vandermonde matrix associated to $(a_0,a_1,a_2,a_3)$.
	Then the change of variables by $M$ defines a surface $S_1$ with the desired Galois action on $\Sing(S_1)$.
	Since $S_0$ is defined by a symmetric polynomial, the coefficients of the equation for $S_1$ are symmetric polynomials in $a_i$, and therefore they lie in $\mathbb{Q}$. The surface $S$ in Notation \ref{nota: deg3, type 4A1} is obtained in this way by inputting the number field $\mathbb{Q}(\zeta_8)$.
\end{exam}

\subsection{Degree $2$, type $D_4+3A_1$}\label{sec:deg2,D4+3A1}
\begin{nota}\label{nota:D4+3A1}
		Over the field $\bQ$, define a morphism of varieties as follows.
	\begin{align*}
		T=\Proj\left(\frac{k_{(1,1,2,2)}[x,y,z,w]}{y^4-16zw}\right)
		&\ \overset{\pi}{\rightarrow}\ 
		S=\Proj\left(\frac{k_{(1,1,1,2)}[X,Y,Z,W]}{W^2-XY(Z^2+Y^2)}\right),\\
		(x:y:z:w)&\ \mapsto\ (x^2:y^2:2(z-w):2xy(z+w)).
	\end{align*}
	It is readily to confirm that $\pi$ is well-defined.
\end{nota}

\begin{prop}\label{prop:deg2,D4+3A1}
	Under Notation \ref{nota:D4+3A1}, the following statements hold.
	\begin{enumerate}
		\item $S$ is a Du Val del Pezzo surface of type $S_2(D_4+3A_1)$, and $T$ is a Du Val del Pezzo surface of type $S_4(A_3+2A_1)$.
		\item We have that $\rho(Y)=\rho(X)=6$.
	\end{enumerate}
\end{prop}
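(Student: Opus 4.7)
The plan is to verify part (1) by a direct singularity analysis of the weighted hypersurfaces, and part (2) by computing the Galois image $H_S\times H_T\subseteq\Cris(S)\times\Cris(T)$ and then invoking the proposition $\rho(\widetilde X)=\rho(\widetilde X_{\overline{\bQ}},H)$ together with Table~\ref{tab-deg2,D4+3A1}.

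For (1), I would apply the Jacobian criterion to $F:=W^2-XY(Z^2+Y^2)$. Setting the partial derivatives to zero reduces to $W=0$ and
\[
Y(Z^2+Y^2)=X(Z^2+3Y^2)=XYZ=0,
\]
and a short case analysis yields exactly the four singular points $(1{:}0{:}0{:}0)$, $(0{:}0{:}1{:}0)$, $(0{:}1{:}\pm i{:}0)$. At $(1{:}0{:}0{:}0)$ the local equation $W^2=Y(Y+iZ)(Y-iZ)$ is the standard $D_4$-form, while at each of the other three points one completes the square to see an $A_1$; combined with $(-K_S)^2=4/(1\cdot 1\cdot 1\cdot 2)=2$, this shows $S$ is of type $S_2(D_4+3A_1)$. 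A parallel analysis for $T$ yields an $A_3$-singularity at $(1{:}0{:}0{:}0)$, where $y^4=16zw$ is the standard $A_3$-form, and two $A_1$-singularities at $(0{:}0{:}1{:}0)$ and $(0{:}0{:}0{:}1)$, which arise as the intersection of $T$ with the $\tfrac12(1,1)$-singular locus $\{x=y=0\}$ of the ambient space, viewed in the local chart $\mathbb{A}^1\times(\mathbb{A}^2/\mu_2)$. Since $(-K_T)^2=4\cdot 4/(1\cdot 1\cdot 2\cdot 2)=4$, $T$ is of type $S_4(A_3+2A_1)$. Finally, a direct substitution gives $\pi^\ast F=-x^2y^2(y^4-16zw)$, confirming that $\pi$ is a well-defined morphism of covers.

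For (2), I would pin down the Galois images using how $\Gal(\bQ(i)/\bQ)$ permutes negative curves. The $A_1$-points $(0{:}1{:}\pm i{:}0)$ of $S$ are Galois-conjugate, and the factorization $Y(Y+iZ)(Y-iZ)$ shows that two of the three $D_4$-branches are swapped by the same conjugation; hence $H_S\subseteq\Cris(S)\cong\fS_3$ is generated by a transposition, so $H_S\cong C_2$. For $T$, all three singular points are $\bQ$-rational, both $(-1)$-curves on $\tT$ (the proper transforms of $\{y=z=0\}$ and $\{y=w=0\}$) are defined over $\bQ$, and the only non-trivial involution of the $A_3$-exceptional chain is induced by $z\leftrightarrow w$, which is not implemented by Galois; hence $H_T\subseteq\Cris(T)\cong C_2$ is trivial. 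The point $(1{:}2{:}1{:}1)\in T(\bQ)$ lies on the smooth locus (the Jacobian $(0,4y^3,-16w,-16z)=(0,32,-16,-16)$ is nonzero there), so the proposition $\rho(\widetilde X)=\rho(\widetilde X_{\overline{\bQ}},H)$ applies, and Table~\ref{tab-deg2,D4+3A1} in the row $(H_S,H_T)=(C_2,C_1)$ yields $\rho(\tS)=\rho(\tT)=6$.

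The main obstacle I expect is the singularity analysis of $T$ at the two points meeting the $\tfrac12(1,1)$-singular locus of $\bP(1,1,2,2)$, and the careful verification that the Galois representation on $\Gamma(\tT)$ is in fact trivial -- one must rule out a non-trivial Galois action on the $A_3$-chain, which amounts to checking that no Galois symmetry of the defining data exchanges $z$ and $w$.
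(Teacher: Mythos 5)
Your proof is correct and follows essentially the same approach as the paper's: identify the singular points, determine the Galois images $H_S\cong C_2$ (the conjugate pair of $A_1$'s, equivalently the swap of two $D_4$-branches) and $H_T\cong C_1$ (both lines $\bQ$-rational), check a smooth $\bQ$-point, and read off the Picard numbers from Table~\ref{tab-deg2,D4+3A1}. The only noteworthy difference is cosmetic: for part (1) you run the Jacobian criterion and local normal forms directly, whereas the paper instead exhibits an explicit isomorphism of $T$ with the standard anticanonically embedded degree-4 model $\{Y^2=XZ,\ Z^2=ST\}\subset\bP^4$; incidentally, your coordinates $(0{:}1{:}\pm i{:}0)$ for the two conjugate $A_1$-points on $S$ are correct, where the paper's listed representatives $(0{:}\pm i{:}2{:}0)$ appear to be a small misprint for $(0{:}\mp 2i{:}2{:}0)$.
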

\begin{proof}
	For (1), note that we have an isomorphism
	\begin{align*}
		T=\Proj\left(\frac{\bQ_{(1,1,2,2)}[x,y,z,w]}{y^4-zw}\right)
		&\ {\rightarrow}\ 
		T^\prime=\Proj\left(\frac{\bQ_{(1,1,1,1,1)}[X,Y,Z,S,T]}{(Y^2-XZ, Z^2-ST)}\right),\\
		(x:y:z:w)&\ \mapsto\ (x^2:xy:y^2:z:w).
	\end{align*}
	For (2), write $Y$ and $X$ for the minimal resolution of $T$ and $S$.
	Note that the singular points on $T$ and $S$ are
	$$\Sing~T=\{ A_3:(1:0:0:0),A_1:(0:0:1:0),A_1:(0:0:0:1)\},$$
	$$\Sing~S=\{D_4:(1:0:0:0),A_1:(0:0:1:0), A_1:(0:-i:2:0),A_1:(0:i:2:0)\}.$$
	So $\Gal(\overline{\bQ}/\bQ)$ acts on $\Gamma(X)$ so that $\rho(X)=6$ as presented in Section \ref{deg2,D4+3A1}.
	The Galois action on $\Gamma(Y)$ is trivial since the two lines on $T$ are not interchanged by the action.
\end{proof}

	We present in the following example how $T\ra S$ is constructed.
	\begin{exam}[Alternative construction of the example]
		Define a group action on $\bP^2_\bC$ by the quaternion group $Q_8=\{ \pm 1, \pm I,\pm J,\pm K \}$ by
		$$I:(X:Y:Z)\mapsto (iX,-iY:Z),\quad J:(X:Y:Z)\mapsto (\zeta_8 Y:\zeta_8^3 X:Z),$$
		where $\zeta_8=\sqrt{2}(1+i)/2$.
		Then we have a tower of quasi-\'etale covers
		$$U:=\bP^2_\bC/\langle I^2 \rangle\overset{\pi_2}{\ra} T:=\bP^2_\bC/\langle I \rangle\overset{\pi_1}{\ra} S:=\bP^2_\bC/Q_8.$$
		The rings of invariants are
		\begin{align*}
			\bC[X,Y,Z,W]^{\langle I\rangle}&=\bC[Z,XY,X^4,Y^4],\\
			\bC[X,Y,Z,W]^{Q_8}&=\bC[Z,X^2Y^2,X^4-Y^4,XY(X^4+Y^4)]\\
			&=\bC[f_1,f_4,g_4,f_6]/(f_6^2-f_4g_4^2-4f_4^3).
		\end{align*}
		Thus
		\begin{align*}
			S&=\Proj\left(\frac{k_{(1,4,4,6)}[x,y,z,w]}{w^2-yz^2-4y^3}\right)
			\cong \Proj\left(\frac{k_{(1,1,1,2)}[X,Y,Z,W]}{W^2-XY(Z^2+Y^2)}\right).
		\end{align*}
		The equations of $T$ and $\pi$ can be obtained in the same way.
	\end{exam}

\subsection{Degree $2$, type $3A_2$}\label{sec:deg2,3A2}
\begin{nota}\label{nota:deg2, 3A2}
	Let $S$ be the hypersurface in $\bP_\bQ(1,1,1,2)$ defined by the equation
	%$$W^2=-4X^4 + 12X^2YZ -8XY^3 -4XZ^3 + 3Y^2Z^2$$
	%The $3$ singular points on $S$ are defined over the field $\bQ[t]/(t^3-2)$, whose Galois group is $\fS_3$.
	$$W^2+(3X^2-YZ)W+9X^4-6X^2YZ+X(Y^3+Z^3)=0.$$
	Let $T$ be the subvariety of $\bP^6_{b,a_1,\cdots,a_6}$ cutting out by
	$$\{ ba_i=a_{i-1}a_{i+1},b^2=a_ia_{i+3}\mid i=1,\cdots,6 \}.$$
	Define the morphism $\pi:T\ra S$ by
	$$\pi:(b:a_1:\cdots:a_6)\mapsto (b:a_1+a_3+a_5:a_2+a_4+a_6:a_1a_2+a_3a_4+a_5a_6)=(X:Y:Z:W).$$
	We will show that $\pi$ is well-defined.
\end{nota}

\begin{prop}\label{prop:deg2, 3A2}
		The following statements hold.
	\begin{enumerate}
		\item $S$ is a Du Val del Pezzo surface of type $S_2(3A_2)$, and $T$ is a del Pezzo surface of degree $6$.
		\item The morphism $\pi$ is a well-defined quasi-\'etale cover.
		\item We have that $\rho(Y)=6>5=\rho(X)$.
	\end{enumerate}
\end{prop}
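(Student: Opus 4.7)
The proof naturally splits into three parts matching the three assertions. For (1), I would first examine $S$: by completing the square in $W$, the equation of $S$ becomes $4W'^2 = -(27X^4 - 18X^2YZ + 4X(Y^3+Z^3) - Y^2Z^2)$ for $W' = W + (3X^2-YZ)/2$, exhibiting $S$ as the double cover of $\bP^2$ branched along the plane quartic $C: 27X^4 - 18X^2YZ + 4X(Y^3+Z^3) - Y^2Z^2 = 0$. Solving the Jacobian ideal of the defining equation directly yields the three singular points $p_0=(1:3:3:3)$ and $p_\pm=(1:3\zeta_3^{\mp 1}:3\zeta_3^{\pm 1}:3)$, which are the three $A_2$-cusps of $C$ (verified by a local analytic normal form computation $u^2+v^3$). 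In particular $\Gal(\overline{\bQ}/\bQ)$ fixes $p_0$ and swaps $p_\pm$. For $T$, the map $(x:y:z) \mapsto (xyz;\, x^2y, xy^2, y^2z, yz^2, xz^2, x^2z)$ defines the standard anticanonical morphism of $\mathrm{Bl}_3\bP^2$ into $\bP^6$ (with the three blown-up points $(1\!:\!0\!:\!0), (0\!:\!1\!:\!0), (0\!:\!0\!:\!1)$), and a direct check shows the image satisfies precisely the nine quadrics defining $T$; since these quadrics cut out a reduced irreducible surface containing this image, $T$ is the smooth del Pezzo of degree $6$.

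For (2), well-definedness amounts to verifying that the equation of $S$ vanishes on the image. Introducing the auxiliary invariant $W' = a_2a_3 + a_4a_5 + a_6a_1$, the relations $ba_i = a_{i-1}a_{i+1}$ and $a_ia_{i+3}=b^2$ give the identities $YZ = W + W' + 3X^2$ and $WW' = X(Y^3+Z^3) - 6X^2YZ + 9X^4$ after a short monomial expansion; substituting $W'=YZ-3X^2-W$ into the second equation recovers exactly the equation of $S$. For the quasi-\'etale property, I would observe that the cyclic automorphism $\sigma:(b;a_1,\ldots,a_6) \mapsto (b;a_3,a_4,a_5,a_6,a_1,a_2)$ preserves $T$ and that $\pi$ factors as the quotient $T \to T/\langle\sigma\rangle$, identifying $S$ with $T/\langle\sigma\rangle$ via the invariants $X, Y, Z, W$. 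A short calculation shows the fixed locus of $\sigma$ on $T$ consists of the three points where $a_{odd}$ and $a_{even}$ are each constant, giving $(b;\zeta_3^{-i},1,\zeta_3^{-i},1,\zeta_3^{-i},1)$ with $b$ determined by the relations; these map bijectively onto $\Sing(S) = \{p_0, p_\pm\}$. Hence $\sigma$ acts freely on $T \setminus \pi^{-1}(\Sing S)$, so $\pi$ is \'etale in codimension one.

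For (3), the Galois action on $\Sing(S)$ computed in (1) is the transposition $(p_+,p_-)$, which extends to a $C_2$-action on the six $A_2$-exceptional curves of $\tilde S$. Combined with the rationality of all seven $(-1)$-curves over $\bQ$ visible from the explicit defining equation, this matches the subgroup $C_2(1) = \langle(1,3)(2,4)(5,6)(7,13)(11,14)\rangle$ of $\Cris(S) \cong \fD_6$ from Section~\ref{deg2,3A2}, giving $\rho(\tilde S) = 5$ by Table~\ref{tab-deg2,3A2}. For $\tilde T$, the defining equations and coordinates of $T$ are all rational over $\bQ$, so $\Gal(\overline{\bQ}/\bQ)$ acts trivially on the extremal curves of $\tilde T$; Corollary~\ref{coro:equivariant} and Table~\ref{tab-deg2,3A2} then identify the corresponding Galois-image in $\Cris(\pi)$ as the row $(C_2(1), C_1)$, yielding $\rho(\tilde T) = 6$. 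The main technical obstacle is the bookkeeping step distinguishing $C_2(1)$ from the other order-two subgroups of $\fD_6$: this uses the fact that the rational point $p_0$ is distinguished among the three singularities (it alone is fixed by Galois), which breaks the $\fS_3$-symmetry of the $3A_2$-configuration into exactly the subgroup appearing in the table.
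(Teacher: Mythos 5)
Your treatment of parts (1) and (2) is essentially sound and takes a slightly different route from the paper: the paper \emph{constructs} $S$ as the quotient $T/\langle\sigma\rangle$ (computing the invariant ring with Magma and then matching the given equations), whereas you start from the stated equations and verify forward that $\pi$ lands in $S$ and factors through the cyclic quotient. Both are legitimate, and your explicit verification of the monomial identities and of the fixed locus of $\sigma$ fills in details the paper outsources to computer algebra. Note one small slip in the paper which you implicitly correct: the paper writes ``Let $G=S_3$'' for the group generated by $\sigma$, but the shift $a_i\mapsto a_{i+2}$ has order $3$, so the correct group is $C_3$.

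Your argument for (3), however, has a genuine gap. You claim ``rationality of all seven $(-1)$-curves over $\bQ$'' and use this together with the fixed cusp $p_0$ to single out $C_2(1)$. This is incorrect on two counts. First, a weak del Pezzo surface of type $S_2(3A_2)$ has \emph{eight} $(-1)$-curves ($E_7,\dots,E_{14}$ in the notation of Section \ref{deg2,3A2}), not seven. Second, the Galois image $C_2(1)=\langle(1,3)(2,4)(5,6)(7,13)(11,14)\rangle$ itself permutes some of the $(-1)$-curves (it interchanges $E_7\leftrightarrow E_{13}$ and $E_{11}\leftrightarrow E_{14}$), so they cannot all be defined over $\bQ$. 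More fundamentally, knowing only that the Galois group fixes $p_0$ and transposes $p_\pm$ pins down the \emph{image} of the induced map $\Cris(S)\cong\fD_6\twoheadrightarrow\fS_3$ on the three cusps, but that quotient has kernel of order $2$, and the two conjugacy classes of reflections in $\fD_6$ (the paper's $C_2(1)$ and $C_2(2)$, plus the $C_2^2$ containing the center) all lift a transposition of the cusps while giving different values of $\rho(\tS,H_S)$. So this observation alone does not suffice. The paper's argument is that $T$ is split over $\bQ$, hence the Galois action on $\Gamma(\tT)$ is \emph{trivial}; then the compatibility condition $\rho(\Gal(\overline{\bQ}/\bQ))\subseteq\Cris(\pi)$ (Corollary \ref{coro:equivariant}) forces the image in $\Cris(S)$ to be a subgroup compatible with $C_1\subseteq\Cris(T)$, and from the bookkeeping in Section \ref{deg2,3A2} the only such order-2 subgroup acting nontrivially on $\Sing(S)$ is $C_2(1)$. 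Your sketch does gesture at triviality of the action on $\Gamma(\tT)$, but it never invokes the $\Cris(\pi)$-compatibility constraint that actually eliminates the competing $C_2$'s, and the rationality-of-lines claim you substitute for it is false.
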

\begin{proof}
	We will show these assertions by constructing $S$ as a quotient variety of $T$.
	It is well-known that $T$ is the anticanonical model of a del Pezzo surface of degree $6$ which is split over $\bQ$.
	Let $G=S_3$ acts on $T$ by
	$$(b,a_1,a_2,a_3,a_4,a_5,a_6)\mapsto (b,a_3,a_4,a_5,a_6,a_1,a_2).$$
	The action interchanges diagonal lines.
	Hence the quotient variety $T/G$ is of type $S_2(3A_2)$ by the diagram in Section \ref{deg2,3A2}.
	We obtain the equations of $S$ and $\pi$ as in Notation \ref{nota:deg2, 3A2} by computing the ring of invariants by $\mathtt{Magma}$.
	
 The singular locus of $S$ is
%	{@ (1/3 : 3 : 3 : 1 : 3 : 1 : 1), (1/3 : -3*a - 3 : 3*a : a : 3 : -a - 1 : 1),
%		(1/3 : 3*a : -3*a - 3 : -a - 1 : 3 : a : 1) @}
	$$\Sing(S)=\{ (1:3:3:3),(1:3\zeta_3,3\zeta_3^2:3),(1:3\zeta_3^2,3\zeta_3:3) \}.$$
	We see that the Galois group acts on $\Sing(S)$ as $C_2$ and trivially on $\Gamma(T)$.
	There are three conjugate classes of subgroups of $\Cris(S)$ that act on $\Sing(S)$ as $C_2$.
	The only one that fixes $\Gamma(T)$ is $C_2(1)$ (see Section \ref{deg2,3A2}).
	Thus, the action on $\Gamma(S)$ is $C_2(1)$,
	and we have that $\rho(T)=6>5=\rho(S)$.
\end{proof}

\subsection{Degree $1$, type $E_6+A_2$}
\begin{nota}\label{nota:deg1, E6+A2}
	Let $S$ be the hypersurface in $\bP_\bQ(1,1,2,3)$ defined by the equation
	$$W^2+Z^3+X^4Y^2=0.$$
%	Let $T$ be the cubic surface in $\bP_\bQ^3$ defined by the equation
%	$$(x_1x_2x_3)^2+(x_1x_2+x_2x_3+x_3x_1)^3+x_0^4(x_1+x_2+x_3)^2=0.$$
\end{nota}

\begin{prop}\label{prop:deg1,E6+A2}
The following assertions hold.
\begin{enumerate}
	\item $S$ is a Du Val del Pezzo surfaces of type $S_1(E_6+A_2)$.
	\item There exits a quasi-\'etale cover $\pi:T\ra S$ where $T$ is a Du Val del Pezzo surface of type $S_3(D_4)$, satisfying $\rho(Y)=7>6=\rho(X)$.
\end{enumerate}
\end{prop}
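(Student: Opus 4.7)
The plan is to verify the three assertions in sequence, leaning on the general apparatus developed earlier.

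For (1), I would compute the singular locus of $S$ directly. Writing $F = W^2 + Z^3 + X^4 Y^2$, the partial derivatives $\partial_X F = 4 X^3 Y^2$, $\partial_Y F = 2 X^4 Y$, $\partial_Z F = 3 Z^2$, $\partial_W F = 2W$ vanish simultaneously (together with $F = 0$) exactly at $p_E = (0:1:0:0)$ and $p_A = (1:0:0:0)$. In the affine chart $Y = 1$ near $p_E$ the equation becomes $W^2 + Z^3 + X^4 = 0$, Arnold's normal form for an $E_6$ singularity; in the chart $X = 1$ near $p_A$ the equation becomes $W^2 + Y^2 + Z^3 = 0$, an $A_2$ singularity. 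Since $F$ has weighted degree $6$ in $\bP_\bQ(1,1,2,3)$ (whose sum of weights is $7$), we have $-K_S = \cO_S(1)$ with $(-K_S)^2 = 6/(1\cdot 1 \cdot 2 \cdot 3) = 1$, so $S$ is a Du Val del Pezzo of degree $1$ of type $E_6 + A_2$.

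For (2), I would invoke Proposition \ref{prop:classification}: the $C_3$ diagram supplies a quasi-\'etale cover $\pi_{\overline{\bQ}} \colon T_{\overline{\bQ}} \to S_{\overline{\bQ}}$ with $T$ of type $S_3(D_4)$, unique up to isomorphism by Proposition \ref{prop:fundamental group}. The point $(1:1:-1:0) \in S(\bQ)$ lies in the smooth locus (since $\partial_X F(1,1,-1,0) = 4 \neq 0$), so the homotopy exact sequence of \'etale fundamental groups descends $\pi$ to $\bQ$, as in the proof of Proposition \ref{prop:deg3, type 4A1}. By twisting through $H^1(\Gal(\overline{\bQ}/\bQ), \Aut(T_{\overline{\bQ}}/S_{\overline{\bQ}}))$ and using that every Cremona isometry of $T$ is realized by a geometric automorphism (the analogue of \cite{corn2005pezzo} for cubics with a $D_4$ singularity), we may arrange that $\Gal(\overline{\bQ}/\bQ)$ acts trivially on $\Gamma(\tT)$.

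For (3), I would analyze the Galois actions on the two dual graphs and invoke Table \ref{tab-deg1,E6+A2}. At $p_A$ the local equation $W^2 + Y^2 + Z^3 = 0$ contains the factor $W^2 + Y^2 = (W+iY)(W-iY)$, so a direct blowup computation in the standard charts shows that the two $(-2)$-curves resolving the $A_2$ singularity are exchanged by $\Gal(\bQ(i)/\bQ)$. The Galois image in $\Cris(\tS)$ is therefore nontrivial; since $\Cris(S) \cong C_2$ by Section \ref{sec:deg1,E6+A2}, it must equal the whole group $H_S = C_2$. Combined with the triviality of the Galois action on $\Gamma(\tT)$ arranged in (2), Table \ref{tab-deg1,E6+A2} yields $\rho(\tS) = \rho(\tS_{\overline{\bQ}}, C_2) = 6$ and $\rho(\tT) = \rho(\tT_{\overline{\bQ}}, 1) = 7$, giving $\rho(\tT) > \rho(\tS)$.

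The main obstacle I anticipate is the twisting step in (2): one must verify that deck-group twists alone suffice to trivialize the Galois action on $\Gamma(\tT)$ without altering the specified $\bQ$-form of $S$ from Notation \ref{nota:deg1, E6+A2}. A safer alternative is to build $T$ explicitly as a cubic surface with a $D_4$ singularity carrying a free-in-codimension-one $C_3$-action --- for instance, the cubic $x_0 x_1^2 + x_2 x_3^2 + x_2^3 = 0$ in $\bP^3$ with the action $(x_0,x_1,x_2,x_3) \mapsto (x_0,x_1,\omega x_2,\omega x_3)$ for $\omega$ a primitive cube root of unity --- and to identify its $C_3$-quotient with $S$ via an explicit change of variables on the ring of invariants.
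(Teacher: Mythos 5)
Your main line of argument tracks the paper closely, with two welcome points where you are more explicit: for (1) you verify the singularities by a direct computation rather than citing \cite{CP20}, and for (3) you read off the nontrivial $C_2 \subset \Cris(S)$ from the two exceptional $(-2)$-curves of the $A_2$ singularity (which are interchanged by $W \pm iY$) rather than from the lines $E_9, E_{10}$ as the paper does; both identify the same generator of $\Cris(S)$. For the descent-and-twisting step in (2), your placeholder ``analogue of \cite{corn2005pezzo} for cubics with a $D_4$ singularity'' is exactly what the paper supplies via \cite{virin2023automorphisms}, so the gap you flagged is the one the paper fills.

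Your proposed ``safer alternative'' for (2), however, is not correct as stated. The cubic $T = \{x_0 x_1^2 + x_2 x_3^2 + x_2^3 = 0\}$ with the $C_3$-action $g \colon (x_0,x_1,x_2,x_3) \mapsto (x_0,x_1,\omega x_2,\omega x_3)$ does have a single $D_4$ singularity at $(1:0:0:0)$ and the action is free in codimension $1$ on $T$, but the fixed locus consists of five points: $(1:0:0:0)$ and $(0:1:0:0)$ on the line $\{x_2 = x_3 = 0\}$, and $(0:0:0:1)$, $(0:0:1:\pm i)$ on the line $\{x_0 = x_1 = 0\}$. At each of the four smooth fixed points the induced action on the tangent plane of $T$ has eigenvalues $(\omega,\omega)$ (e.g.\ at $(0:1:0:0)$ in the chart $x_1=1$ the surface is $x_0 = -x_2 x_3^2 - x_2^3$ with coordinates $(x_2,x_3) \mapsto (\omega x_2, \omega x_3)$), so the corresponding quotient singularity is of type $\tfrac{1}{3}(1,1)$, whose minimal resolution has a single $(-3)$-curve. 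This is log terminal but not canonical, so $T/C_3$ is not a Du Val del Pezzo surface and cannot be $S$. For a quotient of type $S_1(E_6+A_2)$ you need a $C_3$-action with exactly one smooth fixed point, with tangent eigenvalues $(\omega,\omega^2)$ there (giving $A_2$), and no others besides the $D_4$ point; the descent argument of the paper avoids having to produce such an action explicitly.
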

\begin{proof}
	$S$ is of type $S_1(E_6+A_2)$ since the same equation is listed in \cite[Section 8]{CP20}.
	The four lines on $S$ are defined by
	\begin{align*}
		E_9&=\{ Z=W-iX^2Y=0 \},\quad
		&E_{10}&=\{ Z=W+iX^2Y=0 \},\\
		E_{11}&=\{ Y=W^2+Z^3=0 \},\quad
		&E_{12}&=\{ X=W^2+Z^3=0 \},
	\end{align*}
	which can be deduced from those on a twist form of $S$ in \cite{Derenthal2013}.
	As presented in Section \ref{sec:deg1,E6+A2}, the Galois group acts on $\Gamma(X)$ as $C_2$ and $\rho(X)=6$.
	
	Since $E_{11}$ and $E_{12}$ are fixed by the Galois action, there exists a finite descent type $\overline{T}$ over $S$ of type $S_3(D_4)\ra S_1(E_6+A_2)$.
	By Proposition \ref{prop-descend}, this descends to a quasi-\'etale cover $\pi: T \to S$ over $\bQ$ with minimal $\rho(T)$, which is $7$, as presented in Section \ref{sec:deg1,E6+A2}.
\end{proof}

\section{Supplements}\label{sec-supp}
\subsection{Proof of Theorem \ref{theo-guiding-principle} and \ref{theo-examples}}\label{sec-3.1}
Recall the following definition in \cite{BL18}.
\begin{defi}
	A smooth projective geometrically integral variety $X$ over a
	field F is called \textit{almost Fano} if
	\begin{itemize}
		\item $H^i(X,\cO_X)=0$ for $i=1,2$.
		\item The geometric Picard group $\Pic(\overline{X})$ is torsion-free.
		\item The anticanonical divisor $-K_X$ is big.
	\end{itemize}
\end{defi}
In particular, a weak del Pezzo surfaces is an almost Fano variety.
%\begin{lemm}
%	\label{lemm:mainthmC}
%	Let $\pi:T\ra S$ be a quasi-\'etale cover of Du Val del Pezzo surfaces.
%	
%	\begin{enumerate}
%		\item  Suppose that $\rho(\tT)\geq\rho(\tS)$ and that there exists a dense open subset $V$ of $T$ such that the equation
%		$$N(V,-\cK_T,B)\sim c(T,-\cK_T)B(\log B)^{\rho(\tT)-1}$$
%		holds true for any adelically metrized anticanonical line bundle $-\cK_T$ on $T$.
%		Then for any open subset $U$ of $S$, there exists an adelically metrized anticanonical line bundle $-\cK_S$ on $S$ such that the equation
%		\begin{equation}\label{eq11}
%			N(U,-\cK_S,B)\sim c(S,-\cK_S)B(\log B)^{\rho(\tS)-1}
%		\end{equation}
%does not hold true.
%		
%		\item  Suppose that $\rho(\tT)>\rho(\tS)$ and that for any dense open subset $V$ of $T$ and any adelically metrized anticanonical line bundle $-\cK_T$ on $T$, we have that
%		\begin{equation}\label{eq2}
%			N(V,-\cK_T,B)\gg B(\log B)^{\rho(\tT)-1}.
%		\end{equation}
%		Then for any dense open subset $U$ of $S$ and any adelically metrized anticanonical line bundle $-\cK_S$ on $S$, the following assertion holds true:
%For any constant $C>0$, there exists $B_0$ such that for any $B\geq B_0$, we have that
%		$$N(U,-\cK_S,B)\geq CB(\log B)^{\rho(\tS)-1}.$$
%	\end{enumerate}
%\end{lemm}
\begin{proof}[Proof of Theorem \ref{theo-guiding-principle}]
	Since $\pi$ is quasi-\'etale, we have that $\pi^\ast K_S\sim K_T$ and we attach the pulled-back metric to $-K_T$.
	In the case of (1), fix a dense open subset $U$ of $S$ and
	suppose Manin's conjecture for $(U,-\cK_S)$ is true for any adelically metrized anticanonical line bundle $-\cK_S$.
	By \cite[Proposition 5.0.1]{Peyre95}, the rational points are equidistributed on $U$.
	But then
	\begin{align*}
		\lim_{B\ra\infty}\frac{N(\pi(T(\bQ))\cap U,-\cK_S,B)}{N(U,-\cK_S,B)}
		=\frac{c(T,-\cK_T)}{c(S,-\cK_S)}>0,
	\end{align*}
	which contradicts \cite[Theorem 1.2]{BL18}.
	
	In the case of (2),
	let $U_0$ be an dense open subset of $S$ over which $\pi$ is \'etale.
	 Let $U$ be any dense open subset of $X$.
  We have that
	\begin{align*}
		N(U,-\cK_S,B)\geq\frac{1}{\deg \pi} N(\pi^{-1}(U\cap U_0),-\cK_T,B)
	\end{align*}
	Moreover, the assumption (\ref{eq2}) implies that
 $$\liminf_{B\ra\infty}\frac{N(\pi^{-1}(U\cap U_0),-\cK_T,B)}{B(\log B)^{\rho(\tT)-1}}>0.$$
	Hence
	\begin{align*}
		\liminf_{B\ra\infty}\frac{N(U,-\cK_S,B)}{B(\log B)^{\rho(\tS)-1}}
		&\geq \frac{1}{\deg\pi}\liminf_{B\ra\infty}\left(\frac{N(\pi^{-1}(U\cap U_0),-\cK_T,B)}{B(\log B)^{\rho(\tT)-1}}\cdot(\log B)^{\rho(\tT)-\rho(\tS)}\right)\\
		&=+\infty.\qedhere
	\end{align*}\qedhere
\end{proof}

\begin{proof}[Proof of Theorem \ref{theo-examples}]
	Let $S$ be either of the surfaces in (\ref{enum:1a}) or (\ref{enum:1b}).
	By Proposition \ref{prop:deg3, type 4A1} and Proposition \ref{prop:deg2,D4+3A1}, there exists a quasi-\'etale cover $T\ra S$ such that $\rho(\tT)=\rho(\tS)$ and that $T$ is a toric Du Val del Pezzo surface.
	The closed-set version of Manin's conjecture with Peyre's constant is known for $T$ with any adelically metrized line bundle $-\cK_T$ \cite{BT98toric,chambert2010integral}, see also \cite{huang2021equidistribution}.
	Then, the assertion follows by (1) of Theorem \ref{theo-guiding-principle}.
	
	Let $S$ be the surface in (\ref{enum:2a}).
	By Proposition \ref{prop:deg2, 3A2}, there exists a quasi-\'etale cover $T\ra S$ such that $\rho(\tT)>\rho(\tS)$ where $T$ is a del Pezzo surface of degree $6$.
	The closed-set version of Manin's conjecture with Peyre's constant is known for $T$ with any anticanonical height \cite{BT98toric,chambert2010integral}.
	Thus, the assertion follows from (2) of Theorem \ref{theo-guiding-principle}.
	
	Let $S$ be the surface in (\ref{enum:2b}).
	By Proposition \ref{prop:deg1,E6+A2}, there exists a quasi-\'etale cover $T\ra S$ such that $b(T,-K_T)=7>6=b(S,-K_S)$, where $T$ is of type $S_3(D_4)$ and is split over $F$.
%	The surface $T$ admits a conic bundle structure since it is split.
%	By \cite[Theorem 1.1]{FLS18}, we have that
%	\begin{equation}
	%		N(V,-\cK_T,B)\gg B(\log B)^{\rho(\tT)-1}
	%	\end{equation}
%	for any dense open subset $V$ of $T$.
	Over $\bC$, there are two isomorphism classes of surfaces of type $S_3(D_4)$; see Section \ref{sec:deg1,E6+A2}.
	The example in Proposition \ref{prop:deg1,E6+A2} falls into case (1) of Section \ref{sec:deg1,E6+A2}.
	Let $\overline{T}$ be the unique surface in case (1) over $\bC$, up to isomorphism.
	By \cite[Claim 4.1]{virin2023automorphisms}, we have the short exact sequence
	\begin{equation}
		%\label{eq-linearizable1}
		0\to \bG_m\to\Aut(\overline{T})\to \Cris(\overline{T})\to 0.
	\end{equation}
	This induces the long exact sequence
	\begin{equation}
		%\label{eq-linearizable2}
		0\to H^1(\bQ,\Aut(\overline{T}))\to H^1(\bQ,\Cris(\overline{T}))\to \Br(\bQ)\to\cdots.
	\end{equation}
	Therefore, the Galois module structure of $\Pic(T)$ uniquely determines the isomorphism class of $T$ as a $\bQ$-variety.
	Thus $T$ in the example is isomorphic to the surface $T_0$ in $\bP^3_\bQ$ defined by
	\begin{equation}\label{eq-D4cubic}
		x_1x_2x_3 = x_4(x_1 + x_2 + x_3)^2.
	\end{equation}
	In \cite{browning2006density}, a weak version of Manin's conjecture was established for $T_0$:
	$$N(T_0\backslash Z,-K_{T_0},B)\asymp B(\log B)^{7-1},$$
	where $Z$ is the union of $(-1)$-curves.
	Since any two anticanonical height functions differ by a constant, this implies that
	$$N(U,-K_{T},B)\gg B(\log B)^{7-1},$$
	for any Zariski dense open subset $U$ of $T$.
	Thus, the assumption of Theorem \ref{theo-guiding-principle} (2) is satisfied, and Theorem \ref{theo-examples} follows.

\end{proof}

\subsection{Rationality}\label{sec-3.2}
It is also worth noting that the four examples in Theorem \ref{theo-examples} are all rational.
As an immediate conclusion, the assumption in Manin's conjecture that rational points do not form a thin set is satisfied.
\begin{prop}\label{prop:rationality}
    Let $S$ be any of the four surfaces defined in Theorem \ref{theo-examples}.
    Then $S$ is rational over its base field $\bQ$.
\end{prop}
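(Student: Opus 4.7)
The plan is to establish $\bQ$-rationality for each of the four surfaces by an explicit $\bQ$-birational construction.

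For (1)(b), in the affine chart $X=1$ the substitution $Z=uY$, $W=vY$ reduces $W^2=Y(Y^2+Z^2)$ to $v^2=Y(1+u^2)$, yielding the $\bQ$-birational parameterization
\[
(u,v)\longmapsto (X{:}Y{:}Z{:}W)=\bigl(1+u^2 : v^2 : uv^2 : v^3(1+u^2)\bigr),
\]
with rational inverse $u=Z/Y$, $v=W/(XY)$. Analogously for (2)(b), the substitutions $Y=wZ$, $W=uZ$ applied to $W^2+Y^2+Z^3=0$ in the chart $X=1$ force $Z=-(u^2+w^2)$, giving
\[
(u,w)\longmapsto (X{:}Y{:}Z{:}W)=\bigl(1 : -w(u^2+w^2) : -(u^2+w^2) : -u(u^2+w^2)\bigr),
\]
with rational inverse $u=W/Z$, $w=Y/Z$. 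Birationality follows immediately from the explicit inverses in each case.

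For (1)(a) and (2)(a), I would exploit the quasi-\'etale covers $\pi:T\to S$ constructed in Sections \ref{sec:deg3,4A1} and \ref{sec:deg2,3A2}. In both cases, Propositions \ref{prop:deg3, type 4A1} and \ref{prop:deg2, 3A2} exhibit $T$ as a smooth sextic del Pezzo surface split over $\bQ$ after the explicit twist, hence $\bQ$-isomorphic to the blowup of $\bP^2$ at three non-collinear $\bQ$-rational points and in particular $\bQ$-rational. The deck transformation group $G$ ($G=C_2$ in (1)(a), $G=C_3$ in (2)(a)) acts $\bQ$-rationally on $T$ and descends through the blowdown $T\dashrightarrow \bP^2$ to a $\bQ$-linear action on $\bP^2$: a transposition of two base points in (1)(a), and a cyclic permutation of all three in (2)(a). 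Hence $S$ is $\bQ$-birational to $\bP^2/G$; in (1)(a) this quotient is $\bQ$-isomorphic to the $\bQ$-toric surface $\bP(1,1,2)$, and in (2)(a) it is $\bQ$-rational by direct verification using the $\bQ$-rational fixed point $(1{:}1{:}1)$ and the explicit generators $e_1,e_2,e_3,D$ of the $C_3$-invariant ring, where $D=(x-y)(y-z)(z-x)$ is the Vandermonde.

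The main obstacle is the case (2)(a): confirming $\bQ$-rationality of $\bP^2/C_3$ rather than mere $\bQ$-unirationality requires showing that the quadratic extension $\bQ(e_1,e_2,D)/\bQ(e_1,e_2)$ yields a $\bQ$-rational surface. This follows via Iskovskikh's criterion from the existence of a smooth $\bQ$-rational point (supplied by the $C_3$-fixed point, which lifts to a $\bQ$-rational smooth point on the minimal resolution) together with triviality of the relevant Brauer class for this specific Galois module structure.
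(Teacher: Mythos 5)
Your explicit birational parameterizations for (1)(b) and (2)(b) are correct and give a more elementary route than the paper. I verified: for (1)(b) the formula $(u,v)\mapsto (1+u^2 : v^2 : uv^2 : v^3(1+u^2))$ satisfies $W^2 = XY(Z^2+Y^2)$ with inverse $u = Z/Y$, $v = W/(XY)$; for (2)(b) the formula $(u,w)\mapsto (1 : -w(u^2+w^2) : -(u^2+w^2) : -u(u^2+w^2))$ satisfies $W^2+Z^3+X^4Y^2=0$ with inverse $u = W/(XZ)$, $w = XY/Z$. By contrast, the paper handles both cases by contracting Galois orbits of $(-1)$-curves down to a twist of $\bP^2$ with a rational point.

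Your treatment of (1)(a), however, has a genuine error. The deck involution of the quasi-\'etale cover $T \to S$ cannot fix a $(-1)$-curve pointwise (the cover is \'etale in codimension one), and from the hexagon correspondence in Section \ref{deg3,4A1} it acts by $F_i \leftrightarrow F_i'$, i.e.\ it is the antipodal rotation of the hexagon of lines on $T$. This antipodal involution interchanges the two triangles $\{F_{12},F_9',F_5\}$ and $\{F_5',F_{12}',F_9\}$ of pairwise disjoint $(-1)$-curves — the two distinct blowdowns $T\to\bP^2$. Therefore the deck group does \emph{not} descend to a linear action on $\bP^2$; it descends only to a quadratic Cremona involution, and your identification of $S$ with $\bP^2/C_2 \cong \bP(1,1,2)$ is not valid. (A transposition of two base points would fix a third $(-1)$-curve and hence a curve on $T$, contradicting quasi-\'etaleness.) The paper instead contracts the Galois orbit $\{E_6,E_{13}\}$ on $\tilde S$ to a degree $5$ del Pezzo, which is always $\bQ$-rational.

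For (2)(a), the $C_3$-descent to a linear cyclic permutation on $\bP^2$ is correct (the rotation by two steps on the hexagon preserves each triangle), and $S$ is indeed $\bQ$-birational to $\bP^2/C_3$. But the rest of your argument is not a proof: invoking ``Iskovskikh's criterion'' and ``triviality of the relevant Brauer class'' without identifying a minimal model of the resolution, or computing anything, does not establish $\bQ$-rationality. You acknowledge this is the main obstacle; the paper avoids it entirely by contracting the four Galois-stable $(-1)$-curves $E_8,E_{10},E_{11},E_{13}$ on $\tilde S$ to reach a degree $6$ del Pezzo, where $\bQ$-rationality with a rational point is classical. In summary: cases (b) are fine and genuinely different from the paper's argument, but case (1)(a) is wrong and case (2)(a) has an unfilled gap.
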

\begin{proof}
%    It is easy to check that each of the four surfaces defined in Theorem \ref{theo-examples} has a rational point on its smooth locus.
	We only present the last case since the others can be proved in a similar way.

    Let $S$ denote the surface defined in (\ref{enum:2b}) of Theorem \ref{theo-examples} and $X$ be its minimal resolution.
    Using the notations from Section \ref{sec:deg1,E6+A2},
    we may contract the orbit $\{E_9,E_{10}\}$ to obtain a surface of type $S_3(D_4)$, and then contract the image of $\{E_2,E_6\}$ to obtain a surface of type $S_5(A_2)$, and then contract the image of $\{ E_3,E_5\}$ to obtain a surface of type $S_7(A_1)$.
    This can be contracted to a twist of $\bP^2$.
    Since $S$ has a smooth rational point, we conclude that $S$ is $\bQ$-rational.
\end{proof}

In fact, Trepalin \cite{trepalin2018quotients} proved that a finite quotient of a del Pezzo surface of degree $\geq5$ is rational.
This implies that $S$ in (\ref{enum:1a}) and (\ref{enum:2a}) are rational.
However, not all quasi-\'etale quotients of Du Val del Pezzo surfaces are rational,
and sometimes they are arithmetically interesting objects to study.
We will address this topic further in future work.

\subsection{Proof of Theorem \ref{theo-geometric-consistency-fails} and \ref{theo-geometrically-rational}}
\begin{proof}[Proof of Theorem \ref{theo-geometric-consistency-fails}]
	Let $f:Y\ra X$ be an adjoint rigid $a$-cover of a weak del Pezzo surface $X$.
	Let $\psi:X\ra S$ be the contraction of $(-2)$-curves.
	By \cite[Theorem 6.8]{LTDuke}, there exist a quasi-\'etale cover $\pi:T\ra S$ of Du Val del Pezzo surfaces and a birational morphism $\varphi:Y\ra T$ commuting with $\psi$, satisfying
	\begin{itemize}
		\item $a(Y,\varphi^\ast (-K_{T}))=a(Y,f^\ast L)$;
		\item $S,T,X$ and $Y$ are adjoint rigid with respective to the pullback of $-K_S$;
		\item $\varphi$ is the composition of a birational morphism to a weak del Pezzo surface and the contraction of $(-2)$-curves.
	\end{itemize}
	The last property implies that $b(Y,\varphi^\ast\pi^\ast K_S)=b(S,-K_S)$ by \cite[Lemma 4.12]{LST}.
	Thus there exists an adjoint rigid $a$-cover $\pi:Y\ra X$ satisfying \ref{eq-ab} if and only if there exists a quasi-\'etale cover $\pi:T\ra S$ with $\rho(\widetilde{T})\geq\rho(X)$.
	The theorem then follows from Proposition \ref{prop:picard rank} and \ref{prop:classification2}.
\end{proof}

\begin{proof}[Proof of Theorem \ref{theo-geometrically-rational}]
	 By \cite[Theorem 6.10]{LTDuke}, there exists a birational morphism $\psi:X\ra V$ satisfying
	 \begin{itemize}
	 	\item $V$ is a weak del Pezzo surface;
	 	\item $a(Y,f^\ast\varphi^\ast (-K_{V}))=a(Y,f^\ast L)$;
	 	\item $(Y,f^\ast\varphi^\ast (-K_{V}))$ is adjoint rigid.
	 \end{itemize}
	 We thus obtain an adjoint rigid $a$-cover $\psi \circ f$ of $(V, -K_V)$, which reduces to Theorem \ref{theo-geometric-consistency-fails}.
\end{proof}

\subsection{Geometric exceptional sets}\label{sec-ges}
We restate the conjectural exceptional set proposed in \cite{LST} for Du Val del Pezzo surfaces (see \cite[Section 9]{LT19}).
% where we make a slight improvement using Proposition \ref{prop:twist}.
\begin{nota}(Geometric exceptional set)
	\label{nota:manin conjecture}
	Let $S$ be a Du Val del Pezzo surface of degree $d$ defined over a number field $F$
	and $-\cK_S$ be an adelically metrized anticanonical line bundle on $X$.
	\begin{enumerate}
		\item (Accumulating subvarieties) Let $Z_1$ be the union of $(-1)$-curves on $S$.
		\item (Adjoint rigid $a$-covers) Let $Z_2$ be
		% \begin{enumerate}
			% [leftmargin=0cm, itemindent=0.5cm]
			% \item the empty set when $S$ is toric;
			the union of $\pi(T(F))$ where $\pi:T\ra S$ is a quasi-\'etale cover of degree $\geq2$ such that $b(T)\geq b(S)$ and $\pi$ is face-contracting.
			
		% \end{enumerate}
		\item (Non-adjoint rigid $a$-covers) Let $Z_3$ be
		\begin{enumerate}
			[leftmargin=0cm, itemindent=0.5cm]
			\item the empty set if either $d\geq3$ or $\rho(\tS)\geq2$;
			\item the union of rational curves in $\left|-K_S\right|$ when $d=2$ and $\rho(\tS)=1$;
			\item the union of rational curves in $\left|-2K_S\right|$ and rational curves $C$ on $S$ with $C^2=2$ and $K_S\cdot C=-2$, when $d=1$ and $\rho(\tS)=1$.
		\end{enumerate}
	\end{enumerate}
\end{nota}
We call $Z:=Z_1\cup Z_2\cup Z_3$ the \textit{geometric exceptional set} of $(S,-K_S)$.
%\begin{conj}
%% (Manin's conjecture with the conjectural exceptional set)
%	Under Notation \ref{nota:manin conjecture},
%    Conjecture \ref{conj:maninfordvdp} holds true for
%    $Z:=Z_1\cup Z_2\cup Z_3$.
%\end{conj}

We verify that the quasi-\'etale covers in the examples of Section \ref{sec:examples} are the only ones contributing to the geometric exceptional sets.
\begin{prop}\label{prop:ges-for-examples}
    Let $S$ be one of the Du Val del Pezzo surface in Theorem \ref{theo-examples} and
    let $\pi:T\ra S$ be the corresponding quasi-\'etale cover considered in Section \ref{sec:examples}.
    Then under Notation \ref{nota:manin conjecture}, we have that $Z_2=\pi(T(\bQ))$ and $Z_3=\emptyset$.
\end{prop}
\begin{proof}
    The set $Z_3$ is empty since $\rho(X)\geq2$ for any Du Val del Pezzo surface $S$ which is not smooth.
    To prove $Z_2=\pi(T(\bQ))$, one first check Proposition \ref{prop:classification2} that $\Type(\overline{\pi})$ is the only type over $\Type(\overline{S})$ for which there exists a subgroup $H_S\times H_T\subseteq\Cris(\pi)$ such that $\rho(\tT,H_T)\geq\rho(\tS,H_S)$.
    
    There exists only one arrow of $\Type(T)\ra\Type(S)$ in Theorem \ref{theo:classification} , except when $\Type(S)=S_2(D_4+3A_1)$:
    In this case, only one of the three quasi-\'etale covers is a finite descent type over $S_{\sm}$. 
    In each case, the quasi-\'etale cover is face-contracting since there exist two extremal curves map to one.
    So we have
    $$Z_2=\cup \{ \pi^\sigma(T^\sigma(F))\mid\forall \sigma, b(T^\sigma(F))\geq b(S) \}.$$
%    Then, there is only one candidate of a branch divisor on $S$, which determines a unique quasi-\'etale cover $\overline{\pi}:\overline{T}\ra \overline{S}$ up to automorphisms of $\overline{T}$ over $\overline{S}$.
%    Moreover, $\overline{\pi}$ descents to $\bQ$ since there is a smooth $\bQ$-point on $S$.
%    For $\sigma\in H^1(\Gal(\overline{\bQ}/\bQ),\Aut(\overline{T}/\overline{S}))$, write $\pi^\sigma:T^\sigma\ra S$ for the twist of $T$ by $\sigma$ and write $Y^\sigma$ for the minimal resolution of $T^\sigma$.
%    Then $Z_2$ is the union of $\pi(T^\sigma(\bQ))$ such that $\rho(Y^\sigma)\geq \rho(X^\sigma)$ and that $\pi^\sigma$ is face-contracting.

    For each of the four examples in Theorem \ref{theo-examples}, one see $\Gal(\overline{F}/F)$ acts trivially on the vertical Picard group $\Pic(T^\prime_{\overline{F}})_\bQ/\Pic(\overline{X})_\bQ$.
    Thus by Proposition \ref{prop-descend}, we have
    $$\rho(Y)=\max\{ \rho(Y,H_T)\mid \forall H_T \text{ satisfying } H_S\times H_T\in\Cris(\pi) \}.$$
    One can verify that $\rho(Y,H_T)<\rho(X,H_S)$ for any $H_T$ satisfying $H_S\times H_T\in\Cris(\pi)$, except for the unique one attaining the maximum.
%    \begin{enumerate}
%        \item $T$ corresponds to the neutral element in $H^1(\Gal(\overline{\bQ}/\bQ),\Aut(\overline{T}))$; and
%         \item The natural map $\Aut(\overline{T}/\overline{S})\ra \Cris(\overline{T})$ is injective; and
%        \item The only subgroup $H_T^\prime$ of $\Cris(\overline{T})$ such that $H_S\times H_T^\prime\subseteq\Cris(\pi)$ and $\rho(\tT, H_T^\prime) \geq \rho(\tS)$ is $H_T^\prime=H_T$.
%    \end{enumerate}
%    These assertions show that any nontrivial twist of $T$ over $S$ would lower the $b$-invariant of $T$ below that of $S$.
    So the only twist of $\pi$ that contributes to the geometric exceptional set is $\pi$ itself.
    Hence $Z_2=\pi(T(\bQ))$.
\end{proof}

\appendix
\section{Appendix: tables and figures}\label{sec:app}

In this section, we list possible Galois actions on accumulating quasi-\'etale covers of Du Val del Pezzo surfaces.
In Table \ref{bigtable}, we list all types of Du Val del Pezzo surfaces that admit a nontrivial quasi-\'etale cover, except in degree $1$.
In the following sections, all group actions leading to accumulating are listed. Some cases are omitted because the dual graph is too complicated.

In the dual graphs, there are two types of vertices: a rectangle represents a $(-1)$-curve and a circle represents a $(-2)$-curve. The number of edges between two vertices means the intersection number of the corresponding curves.
A $(-2)$-curve where $\pi$ is branched is denoted by a thick circle.
The negative curves on $S$ and $T$ are labeled $E_i$ and $F_j$, where $i$ and $j$ reflect the correspondence defined in Definition \ref{defi-corr}.

\begin{table}[hbt]
	\caption{A summary of the following sections,
		where $(\geq)$ (\resp~$(>)$) means there exists an $H_T\times H_S\subseteq\Cris(\pi)$ such that $\rho(\tT,H_T)\geq\rho(\tS,H_S)$ (\resp~$\rho(\tT,H_T)>\rho(\tS,H_S)$).}
	\label{bigtable}
	\small
	\centering

	\begin{tabular}{cccccccc}
		\toprule
		reference              & $\deg(S)$ & lines & $\mathrm{Type}(S)$ & $\deg(T)$ & $\mathrm{Type}(T)$ & $(\geq)$? & $(>)$? \\ \hline
		                    & $4$       & $2$   & $A_3+2A_1$         & $8$       & $A_1$              & NO        & NO     \\
		(\ref{deg4,4A1})       & $4$       & $4$   & $4A_1$             & $8$       & $\bP^1\times\bP^1$ & NO        & NO     \\
		                    & $3$       & $2$   & $A_5+A_1$          & $6$       & $A_2$              & NO        & NO     \\
		                    & $3$       & $3$   & $3A_2$             & $9$       & $\bP^2$            & NO        & NO     \\
		                    & $3$       & $5$   & $A_3+2A_1$         & $6$       & $A_1(4l)$          & NO        & NO     \\
		(\ref{deg3,4A1})       & $3$       & $9$   & $4A_1$             & $6$       & smooth             & YES       & NO     \\
		(\ref{deg2,A7})        & $2$       & $2$   & $A_7$              & $4$       & $A_3(4l)$          & NO        & NO     \\
		                    & $2$       & $2$   & $D_6+A_1$          & $4$       & $D_4$              & NO        & NO     \\
		                    & $2$       & $3$   & $A_5+A_2$          & $6$       & $A_1$              & NO        & NO     \\
		(\ref{deg2,2A3+A1})    & $2$       & $4$   & $2A_3+A_1$         & $4$       & $4A_1$             & YES       & NO     \\
		(\ref{deg2,D4+3A1})    & $2$       & $4$   & $D_4+3A_1$         & $4$       & $A_3+2A_1$         & YES       & NO     \\
		(\ref{deg2,2A3})       & $2$       & $6$   & $2A_3$             & $4$       & $2A_1(8l)$         & YES       & NO     \\
		(\ref{deg2,A5+A1})     & $2$       & $6$   & $A_5+A_1(6l)$      & $4$       & $A_2$              & NO        & NO     \\
		(\ref{deg2,D4+2A1})    & $2$       & $6$   & $D_4+2A_1$         & $4$       & $A_3(4l)$          & YES       & NO     \\
		(\ref{deg2,3A2})       & $2$       & $8$   & $3A_2$             & $6$       & smooth             & YES       & YES    \\
		(\ref{deg2,A3+3A1})    & $2$       & $8$   & $A_3+3A_1$         & $4$       & $3A_1$             & YES       & NO     \\
		(\ref{deg2,6A1})       & $2$       & $10$  & $6A_1$             & $4$       & $4A_1$             & YES       & YES    \\
		                    & $2$       & $12$  & $A_3+2A_1(12l)$    & $4$       & $A_1$              & NO        & NO     \\
		                    & $2$       & $14$  & $5A_1$             & $4$       & $2A_1(8l)$         & YES       & YES    \\
		                    & $2$       & $20$  & $4A_1(20l)$        & $4$       & smooth             & YES       & YES    \\
		                    & $1$       & $3$   & $E_7+A_1$          & $2$       & $E_6$              & NO        & NO     \\
		(\ref{sec:deg1,E6+A2}) & $1$       & $4$   & $E_6+A_2$          & $3$       & $D_4$              & YES       & YES    \\ \bottomrule
	\end{tabular}
\end{table}
%\end{center}

\subsection{Degree $4$, type $4A_1$}
\label{deg4,4A1}
\[
\begin{tikzpicture}[node distance={24pt},
	minus1/.style = {draw, inner sep=2pt},
	minus2/.style = {draw, circle, inner sep=0pt}
	]
	\def\unit{48pt}
	\def\out{2}
	
	\node[] at (-4.8,0)(41){$F_{5,7}$};
	\node[] at (-3.2,0)(42) {$F_{6,8}$};
	\draw (41)--(42);
	\node[] at (-4,-2)(43){$T=\bP_1\times\bP_1$};
	\node[] at (-1.65,0)(40){$\longrightarrow$};
	
	\node[minus2,very thick] at (0,-1)(5b){$E_{4}$};
	\node[minus2,very thick] at (0,1)(3a){$E_{1}$};
	\node[minus2,very thick] at (2,-1)(3b){$E_{3}$};
	\node[minus2,very thick] at (2,1)(5a){$E_{2}$};
	\node[minus1] at (0,0)(16b){$E_{7}$};
	\node[minus1] at (1,-1)(8b){$E_{6}$};
	\node[minus1] at (2,0)(16a){$E_{5}$};
	\node[minus1] at (1,1)(8a){$E_{8}$};
	\draw (3a)--(16b)--(5b)--(8b)--(3b)--(16a)--(5a)--(8a)--(3a);
	
	\node(53)[below=12pt of 8b]{$S:$ degree $4$, type $4A_1$};

\end{tikzpicture}
\]

$$ \Cris(S)\cong\fD_4.$$
There is no $H_\tS$ and $H_\tT$ such that
$\rho(\tT,H_T)\geq\rho(\tS,H_S).$

\subsection{Degree $3$, type $4A_1$}
\label{deg3,4A1}
\[
\begin{tikzpicture}[node distance={24pt},
	minus1/.style = {draw, inner sep=2pt},
	minus2/.style = {draw, circle, inner sep=0pt}
	]
	\def\unit{24pt}
	\def\out{3}
	\def\unitb{36pt}
	\node[draw,minus1] (8a) at (90:\unitb) {$F_{12}$};
	\node[draw,minus1] (11a) at (30:\unitb) {$F_{5}^\prime$};
	\node[draw,minus1] (13a) at (-30:\unitb) {$F_{9}^\prime$};
	\node[draw,minus1] (8b) at (-90:\unitb) {$F_{12}^\prime$};
	\node[draw,minus1] (11b) at (-150:\unitb) {$F_{5}$};
	\node[draw,minus1] (13b) at (-210:\unitb) {$F_{9}$};
	\draw (8a)--(11a)--(13a)--(8b)--(11b)--(13b)--(8a);
	\node[] (51) at (-90:\unit*\out+24pt) {$T:$ degree $6$, smooth};
	
	\node[](40) [shift={(72pt,0)}]{$\longrightarrow$};
	
	\def\shift{180pt}
	\node[draw,very thick,minus2,shift={(\shift,0)}] (4) at (0:0) {$E_4$};
	\node[draw,very thick,minus2,shift={(\shift,0)}] (1) at (90:\unit*\out/2) {$E_1$};
	\node[draw,minus1,shift={(\shift,0)}] (7) at (30:\unit) {$E_{13}$};
	\node[draw,very thick,minus2,shift={(\shift,0)}] (3) at (-30:\unit*\out/2) {$E_3$};
	\node[draw,minus1,shift={(\shift,0)}] (6) at (-90:\unit) {$E_{11}$};
	\node[draw,very thick,minus2,shift={(\shift,0)}] (2) at (-150:\unit*\out/2) {$E_2$};
	\node[draw,minus1,shift={(\shift,0)}] (5) at (-210:\unit) {$E_8$};
	
	\node[draw,minus1,shift={(\shift,0)}] (13) at (90:\unit*\out) {$E_{12}$};
	\node[draw,minus1,shift={(\shift,0)}] (10) at (30:\unit*\out) {$E_{7}$};
	\node[draw,minus1,shift={(\shift,0)}] (11) at (-30:\unit*\out) {$E_{9}$};
	\node[draw,minus1,shift={(\shift,0)}] (12) at (-90:\unit*\out) {$E_{10}$};
	\node[draw,minus1,shift={(\shift,0)}] (8) at (-150:\unit*\out) {$E_5$};
	\node[draw,minus1,shift={(\shift,0)}] (9) at (-210:\unit*\out) {$E_6$};
	\node[shift={(\shift,0)}] (52) at (-90:\unit*\out+24pt) {$S:$ degree $3$, type $4A_1$};
	
	\draw (5)--(4)--(6);\draw (4)--(7);
	\draw (3)--(7)--(13)--(9);
	\draw (2)--(6)--(11)--(10);
	\draw (1)--(5)--(8)--(12);
	\draw (9)--(1)--(10)--(3)--(12)--(2)--(9);
	\draw (13)--(8)--(11)--(13);
\end{tikzpicture}
\]
$$ \Cris(S)=\langle \sigma_1,\sigma_2,\sigma_3,\sigma_4 \rangle\cong\fS_4,$$
where
\begin{align*}
	\sigma_1&=(1, 4)(2, 3)(6, 13)(7, 11),\\
	\sigma_2&=(1, 2)(3, 4)(7, 11)(8, 10),\\
	\sigma_3&=(1, 3)(5, 12)(6, 10)(8, 13),\\
	\sigma_4&=(1, 2)(5, 9)(7, 10)(8, 11).
\end{align*}

The subgroups $H_T\times H_S\subseteq \Cris(\pi)$ up to conjugacy satisfying $\rho(\tT,H_T)\geq\rho(\tS,H_S)$ are listed in Table \ref{tab-deg3,4A1}, where $C_2^2(2)$ is the normal subgroup of $\fS_4$ isomorphic to $C_2^2$.
\begin{center}
	\begin{table}[H]
		\caption{Degree $3$, type $4A_1$}
		\label{tab-deg3,4A1}
		\begin{tabular}{cccc}
			\toprule
			$H_\tS$&$H_\tT$&$\rho(\tS,H_S)$&$\rho(\tT,H_T)$\\
			\midrule
			%			$S_3$&$C_2$&$4$&$$\\
			%			$C_3=\langle(1,2,4)\rangle$&$C_2=\langle(9,10)\rangle$&$4$&$$\\
			$\fS_4$&$\fS_3$&$2$&$2$\\
			$\fA_4$&$C_3$&$2$&$2$\\
			$\fD_4$&$C_2$&$3$&$3$\\
			%				$\fS_3$&$C_3$&$3$&$2$\\
			%				$C_2^2$(1)&$C_2$&$4$&$3$\\
			$C_2^2(2)$&$C_1$&$4$&$4$\\
			$C_4$&$C_2$&$3$&$3$\\
			%				$C_3$&$C_3$&$3$&$2$\\
			%				$C_2(1)$&$C_2$&$5$&$3$\\
			%				$C_2(2)$&$0$&$5$&$4$\\
			%				$0$&$0$&$7$&$4$\\
			\bottomrule
		\end{tabular}
	\end{table}
\end{center}

%\section{Degree $2$, type $5A_1$}
%\label{deg2,5A1}

%
%\section{Degree $2$, type $4A_1(20l)$}
%\label{deg2,4A1(20l)}

\subsection{Degree $2$, type $A_7$}
\label{deg2,A7}
\[
\begin{tikzpicture}[node distance={24pt},
	minus1/.style = {draw, inner sep=2pt},
	minus2/.style = {draw, circle, inner sep=0pt}
	]
	\node[minus2] (52){$F_2$};
	\node[minus2] (54) [right of=52] {$F_4$};
	\node[minus2] (56) [right of=54] {$F_6$};
	\node[minus1] (58) [above left of=52] {$F_8$};
	\node[minus1] (58a) [below left of=52] {$F_8^\prime$};
	\node[minus1] (59) [above right of=56] {$F_9$};
	\node[minus1] (59a) [below right of=56] {$F_9^\prime$};
	\draw (58)--(52)--(54)--(56)--(59);
	\draw (58a)--(52);
	\draw (59a)--(56);
	
	\node[](ra) [right=24pt of 56]{$\longrightarrow$};
	
	\node[minus2,very thick] (1) [above right=-6pt and 12pt of ra]{$E_1$};
	\node[minus2] (2) [right of=1] {$E_2$};
	\node[minus2,very thick] (3) [right of=2] {$E_3$};
	\node[minus2] (4) [right of=3] {$E_4$};
	\node[minus2,very thick] (5) [right of=4] {$E_5$};
	\node[minus2] (6) [right of=5] {$E_6$};
	\node[minus2,very thick] (7) [right of=6] {$E_7$};
	\node[minus1] (8) [below of=2] {$E_8$};
	\node[minus1] (9) [below of=6] {$E_9$};
	\draw (1)--(2)--(3)--(4)--(5)--(6)--(7);
	\draw (2)--(8);
	\draw (6)--(9);
	\node[](47) [below=24pt of 54] {$T:$ degree $4$, type $A_3(4l)$};
	\node[](51) [right=36pt of 47] {$S:$ degree $2$, type $A_7$};
\end{tikzpicture}
\]
$$ \Cris(S)\cong C_2.$$
There is no $H_\tS$ and $H_\tT$ such that
$\rho(\tT,H_T)\geq\rho(\tS,H_S).$
%\begin{center}
%	\begin{table}[h]
	%		\begin{tabular}{cccc}
		%			\toprule
		%			$H_\tS$&$H_\tT$&$\rho(\tS,H_S)$&$\rho(\tT,H_T)$\\
		%			\midrule
		%			$C_2=\langle(2,6)\rangle$&$C_2=\langle(2,6)\rangle$&$5$&$4$\\
		%			$0$&$0$&$8$&$6$\\
		%			\bottomrule
		%		\end{tabular}
	%	\end{table}
%\end{center}

\subsection{Degree $2$, type $2A_3+A_1$}
\label{deg2,2A3+A1}
\[
\begin{tikzpicture}[node distance={24pt},
	minus1/.style = {draw, inner sep=2pt},
	minus2/.style = {draw, circle, inner sep=0pt}
	]
	\node[minus2] (12){$F_2$};
	\node[minus1] (19) [above of=12] {$F_9$};
	\node[minus2] (17) [right of=19] {$F_7$};
	\node[minus1] (20) [right of=17] {$F_{10}$};
	\node[minus2] (15) [below of=20] {$F_5$};
	\node[minus1] (29) [below of=12] {$F_9^\prime$};
	\node[minus2] (27) [right of=29] {$F_7^\prime$};
	\node[minus1] (30) [right of=27] {$F_{10}^\prime$};
	\draw (12)--(19)--(17)--(20)--(15)--(30)--(27)--(29)--(12);
	
	\node[](40) [right=12pt of 15]{$\longrightarrow$};
	
	\node[minus2] (2) [right=12pt of 40] {$E_2$}; 
	\node[minus1] (9) [right of=2] {$E_9$}; 
	\node[minus2] (7) [right of=9] {$E_{7}$}; 
	\node[minus1] (8) [above of=7] {$E_8$}; 
	\node[minus2,very thick] (1) [left of=8]{$E_1$};
	\node[minus2,very thick] (4) [right of=8] {$E_4$};
	\node[minus1] (11) [below of=7] {$E_{11}$};
	\node[minus1] (10) [right of=7] {$E_{10}$}; 
	\node[minus2] (5) [right of=10] {$E_{5}$}; 
	\node[minus2,very thick] (3) [left of=11] {$E_3$}; 
	\node[minus2,very thick] (6) [right of=11] {$E_6$}; 
	\draw (1)--(8)--(4)--(5)--(6)--(11)--(3)--(2)--(1);
	\draw (2)--(9)--(7)--(10)--(5);
	
	\node[](40) [left=12pt of 12] {$\longrightarrow$};
	\node[](41) [left=12pt of 40] {$\bP^1\times\bP^1$};

	\node[](47) [below=12pt of 27] {$T:$ degree $4$, type $4A_1$};
	\node[](51) [base right=of 47] {$S:$ degree $2$, type $2A_3+A_1$};
\end{tikzpicture}
\]
$$ \Cris(S)\cong C_2^2.$$

The subgroups $H_T\times H_S\subseteq \Cris(\pi)$ up to conjugacy satisfying $\rho(\tT,H_T)\geq\rho(\tS,H_S)$ are listed in Table \ref{tab-deg2,2A3+A1}.
\begin{center}
	\begin{table}[hbt]
		\caption{Degree $2$, type $2A_3+A_1$.}
		\label{tab-deg2,2A3+A1}
		\begin{tabular}{cccc}
			\toprule
			$H_\tS$&$H_\tT$&$\rho(\tS,H_S)$&$\rho(\tT,H_T)$\\
			\midrule
			$C_2^2$&$C_2$&$4$&$4$\\
			%			$C_2=\langle(1,4)\rangle$&$C_2=\langle(9,10)\rangle$&$5$&$4$\\
			%			$C_2=\langle(1,6)\rangle$&$C_2=\langle(9,10)\rangle$&$5$&$4$\\
			$C_2=\langle(1,3)(4,6)(8,11)\rangle$&$C_1$&$6$&$6$\\
			%			$0$&$0$&$8$&$6$\\
			\bottomrule
		\end{tabular}
	\end{table}
\end{center}

\subsection{Degree $2$, type $D_4+3A_1$}
\label{deg2,D4+3A1}
\[
\begin{tikzpicture}[node distance={24pt},
	minus1/.style = {draw, inner sep=2pt},
	minus2/.style = {draw, circle, inner sep=0pt}
	]

	\node[minus2](25){$F_5$};
	\node[minus1](29)[right of=25]{$F_9$};
	\node[minus2](22)[right of=29]{$F_2$};
	\node[minus2](23)[below right of=22]{$F_3$};
	\node[minus2](32)[below left of=23]{$F_2^\prime$};
	\node[minus1](39)[left of=32]{$F_9^\prime$};
	\node[minus2](35)[left of=39]{$F_5^\prime$};
	\draw (25)--(29)--(22)--(23)--(32)--(39)--(35);
	
	\node[](40) [right=12pt of 23]{$\longrightarrow$};
	\node[](41)[left=124pt of 40]{$\longrightarrow$};
	\node[](42)[left=64pt of 41]{$G_9$};
	\node[minus2](44)[right=12pt of 42]{$G_3$};
	
	\draw (42)--(44);
	
	\node[minus1] (2) [right=12pt of 40] {$E_{11}$}; 
	\node[minus2] (9) [right of=2] {$E_5$}; 
	\node[minus1] (7) [right of=9] {$E_{9}$}; 
	\node[minus2] (10) [right of=7] {$E_{2}$}; 
	\node[minus2] (5) [right of=10] {$E_{3}$}; 
	\node[minus1] (8) [above of=7] {$E_8$}; 
	\node[minus2,very thick] (1) [left of=8]{$E_7$};
	\node[minus2,very thick] (4) [right of=8] {$E_1$};
	\node[minus1] (11) [below of=7] {$E_{10}$};
	\node[minus2,very thick] (3) [left of=11] {$E_6$}; 
	\node[minus2,very thick] (6) [right of=11] {$E_4$}; 
	\draw (1)--(8)--(4)--(5)--(6)--(11)--(3)--(2)--(1);
	\draw (2)--(9)--(7)--(10)--(5);
	\node[](47) [below=12pt of 39] {$T:$ degree $4$, type $A_3+2A_1$};
	\node[](51) [right=12pt of 47] {$S:$ degree $2$, type $D_4+3A_1$};
	\node[](43)[left=24pt of 47]{degree 8, type $A_1$};
\end{tikzpicture}
\]
$$ \Cris(S)\cong \fS_3.$$

The subgroups $H_T\times H_S\subseteq \Cris(\pi)$ up to conjugacy satisfying $\rho(\tT,H_T)\geq\rho(\tS,H_S)$ are listed in Table \ref{tab-deg2,D4+3A1}.
\begin{center}
	\begin{table}[hbt]
		\caption{Degree $2$, type $D_4+3A_1$.}
		\label{tab-deg2,D4+3A1}
		\begin{tabular}{cccc}
			\toprule
			$H_\tS$&$H_\tT$&$\rho(\tS,H_S)$&$\rho(\tT,H_T)$\\
			\midrule
			%			$S_3$&$C_2$&$4$&$$\\
			%			$C_3=\langle(1,2,4)\rangle$&$C_2=\langle(9,10)\rangle$&$4$&$$\\
			$C_2$&$C_1$&$6$&$6$\\
			%			$0$&$0$&$8$&$6$\\
			\bottomrule
		\end{tabular}
	\end{table}
\end{center}

\subsection{Degree $2$, type $2A_3$}
\label{deg2,2A3}
\[
\begin{tikzpicture}[node distance={24pt},
	minus1/.style = {draw, inner sep=2pt},
	minus2/.style = {draw, circle, inner sep=0pt}
	]
	
	\node[minus2](2a){$F_2$};
	\node[minus1](7b)[above right=0pt and 12pt of 2a]{$F_7^\prime$};
	\node[minus1](9a)[below right=0pt and 12pt of 2a]{$F_9$};
	\node[minus1](7a)[above right=24pt and 12pt of 2a]{$F_7$};
	\node[minus1](9b)[below right=24pt and 12pt of 2a]{$F_9^\prime$};
	\node[minus1](10b)[right=0pt and 12pt of 7b]{$F_{10}^\prime$};
	\node[minus1](11a)[right=0pt and 12pt of 9a]{$F_{11}$};
	\node[minus1](10a)[right=24pt and 12pt of 7a]{$F_{10}$};
	\node[minus1](11b)[right=24pt and 12pt of 9b]{$F_{11}^\prime$};
	\node[minus2](5a)[below right=0pt and 12pt of 10b]{$F_5$};
	\draw (2a)--(7a)--(10a)--(5a);
	\draw (2a)--(7b)--(10b)--(5a);
	\draw (2a)--(9a)--(11a)--(5a);
	\draw (2a)--(9b)--(11b)--(5a);
	
	\node[](40) [right=12pt of 5a]{$\longrightarrow$};
	
	\node[minus2](2)[right=12pt of 40]{$E_2$};
	\node[minus1](7)[above right=0pt and 12pt of 2]{$E_7$};
	\node[minus1](10)[right of=7]{$E_{10}$};
	\node[minus2](5)[below right=0pt and 12pt of 10]{$E_5$};
	\node[minus1](9)[below right=0pt and 12pt of 2]{$E_9$};
	\node[minus1](11)[right of=9]{$E_{11}$};
	\node[minus2,very thick](1)[above=24pt of 2]{$E_1$};
	\node[minus2,very thick](3)[below=24pt  of 2]{$E_3$};
	\node[minus2,very thick](4)[above=24pt  of 5]{$E_4$};
	\node[minus2,very thick](6)[below=24pt  of 5]{$E_6$};
	\node[minus1] (8) at ($(1)!0.5!(4)$) {$E_8$};
	\node[minus1] (12) at ($(3)!0.5!(6)$) {$E_{12}$};
	\draw (1)--(2)--(3)--(12)--(6)--(5)--(4)--(8)--(1);
	\draw (2)--(7)--(10)--(5)--(11)--(9)--(2);
	\node[](51) [below=12pt of 12] {$S:$ degree $2$, type $2A_3$};
	\node[](47) [left=24pt of 51)] {$T:$ degree $4$, type $2A_1(8l)$};
\end{tikzpicture}
\]
$$ \Cris(S)=\langle a,b,c\rangle\cong C_2^3,$$
where
\begin{align*}
	a&=(7, 9)(10, 11),\\
	b&=(1, 4)(2, 5)(3, 6)(7, 10)(9, 11),\\
	c&=(1, 3)(4, 6)(8, 12).
\end{align*}

The subgroups $H_T\times H_S\subseteq \Cris(\pi)$ up to conjugacy satisfying $\rho(\tT,H_T)\geq\rho(\tS,H_S)$ are listed in Table \ref{tab-deg2, 2A3}.
%The group $C_2^3$ has $7$ subgroups isomorphic to $C_2^2$ and $7$ subgroups isomorphic to $C_2$, and all of them are not conjugate.
\begin{center}
	\begin{table}[h]
		\caption{Degree $2$, type $2A_3$.}
		\label{tab-deg2, 2A3}
		%		\footnotesize
		\begin{tabular}{cccc}
			\toprule
			$H_\tS$&$H_\tT$&$\rho(\tS,H_S)$&$\rho(\tT,H_T)$\\
			\midrule
			%			$C_2^3$&$C_2^2$&$3$&$2$\\
			%			$C_2^2=\langle(1, 4)(2, 5)(3, 6)(7, 10)(9, 11),
			%			(1, 3)(4, 6)(8, 12)\rangle$&$-$&$3$&$2$\\
			%			$C_2^2=\langle (1, 3)(4, 6)(7, 9)(8, 12)(10, 11),
			%			(1, 6)(2, 5)(3, 4)(7, 10)(8, 12)(9, 11) \rangle$&$-$&$3$&$2$\\
			%			$C_2^2=\langle (1, 3)(4, 6)(7, 9)(8, 12)(10, 11),
			%			(1, 4)(2, 5)(3, 6)(7, 10)(9, 11) \rangle$&$-$&$3$&$2$\\
			%			$C_2^2=\langle (1, 4)(2, 5)(3, 6)(7, 11)(9, 10),
			%			(1, 3)(4, 6)(8, 12) \rangle$&$-$&$4$&$2$\\
			%			$C_2^2=\langle (7, 9)(10, 11),
			%			(1, 6)(2, 5)(3, 4)(7, 10)(8, 12)(9, 11)) \rangle$&$-$&$4$&$2$\\
			%			$C_2^2=\langle (7, 9)(10, 11),
			%			(1, 4)(2, 5)(3, 6)(7, 10)(9, 11) \rangle$&$-$&$4$&$2$\\
			%			$C_2^2=\langle (7, 9)(10, 11),
			%			(1, 3)(4, 6)(8, 12) \rangle$&$-$&$5$&$4$\\
			%			
			%			$C_2=\langle(7, 9)(10, 11)\rangle$&$-$&$7$&$4$\\
			%			$C_2=\langle(1, 6)(2, 5)(3, 4)(7, 10)(8, 12)(9, 11)\rangle$&$-$&$4$&$2$\\
			%			$C_2=\langle(1, 4)(2, 5)(3, 6)(7, 10)(9, 11)\rangle$&$-$&$4$&$2$\\
			%			$C_2=\langle(1, 6)(2, 5)(3, 4)(7, 11)(8, 12)(9, 10)\rangle$&$-$&$5$&$2$\\
			%			$C_2=\langle(1, 3)(4, 6)(7, 9)(8, 12)(10, 11)\rangle$&$-$&$5$&$4$\\
			%			$C_2=\langle(1, 4)(2, 5)(3, 6)(7, 11)(9, 10)\rangle$&$-$&$5$&$2$\\
			$C_2=\langle c\rangle$&$C_1$&$6$&$6$\\
			%			$0$&$0$&$8$&$6$\\
			\bottomrule
		\end{tabular}
	\end{table}
\end{center}

\subsection{Degree $2$, type $A_5+A_1(6l)$}
\label{deg2,A5+A1}
\[
\begin{tikzpicture}[node distance={24pt},
	minus1/.style = {draw, inner sep=2pt},
	minus2/.style = {draw, circle, inner sep=0pt}
	]
	\def\unit{1.3}
	\def\shift{-150pt}
	
	\node[minus2,shift={(\shift,0)}] at (-\unit,\unit)(2){$F_2$};
	\node[minus2,shift={(\shift,0)}] at (\unit,\unit)(4){$F_4$};
	\node[minus1,shift={(\shift,0)}] at (-\unit,0)(1){$F_{10}^\prime$};
	\node[minus1,shift={(\shift,0)}] at (\unit,0)(5){$F_{11}$};
	\node[minus1,shift={(\shift,0)}] at (-\unit,-\unit)(9){$F_8^\prime$};
	\node[minus1,shift={(\shift,0)}] at (\unit,-\unit)(12){$F_{7}$};
	\node[minus1,shift={(\shift,0)}] at (-0.5,0.5)(10){$F_{10}$};
	\node[minus1,shift={(\shift,0)}] at (0.5,0.5)(11){$F_{11}^\prime$};
	\node[minus1,shift={(\shift,0)}] at (-0.5,-0.5)(7){$F_{7}^\prime$};
	\node[minus1,shift={(\shift,0)}] at (0.5,-0.5)(8){$F_{8}$};
	\draw (2)--(4)--(5)--(12)--(9)--(1)--(2);
	\draw (2)--(10)--(8)--(12);
	\draw (4)--(11)--(7)--(9);
	\draw (7)--(8);
	
	\node[] at (-2.5,0){$\longrightarrow$};
	
	\node[minus2] at (-\unit,\unit)(2){$E_2$};
	\node[minus2,very thick] at (0,\unit)(3){$E_3$};
	\node[minus2] at (\unit,\unit)(4){$E_4$};
	\node[minus2,very thick] at (-\unit,0)(1){$E_1$};
	\node[minus2,very thick] at (\unit,0)(5){$E_5$};
	\node[minus1] at (-\unit,-\unit)(9){$E_9$};
	\node[minus2,very thick] at (0,-\unit)(6){$E_6$};
	\node[minus1] at (\unit,-\unit)(12){$E_{12}$};
	
	\node[minus1] at (-0.5,0.5)(10){$E_{10}$};
	\node[minus1] at (0.5,0.5)(11){$E_{11}$};
	\node[minus1] at (-0.5,-0.5)(7){$E_{7}$};
	\node[minus1] at (0.5,-0.5)(8){$E_{8}$};
	\draw (2)--(3)--(4)--(5)--(12)--(6)--(9)--(1)--(2);
	\draw (2)--(10)--(8)--(12);
	\draw (4)--(11)--(7)--(9);
	\draw (7)--(8);
	
	\node[](51) [below=12pt of 6] {$S:$ degree $2$, type $A_5+A_1(6l)$};
	\node[](47) [left=12pt of 51] {$T:$ degree $4$, type $A_2$};
\end{tikzpicture}
\]
$$ \Cris(S)\cong C_2.$$
There is no $H_\tS$ and $H_\tT$ such that
$\rho(\tT,H_T)\geq\rho(\tS,H_S).$

\subsection{Degree $2$, type $D_4+2A_1$}
\label{deg2,D4+2A1}
\[
\begin{tikzpicture}[node distance={24pt},
	minus1/.style = {draw, inner sep=2pt},
	minus2/.style = {draw, circle, inner sep=0pt}
	]
	
	\node[minus2] at (-4.3,1.5)(52){$F_1$};
	\node[minus2] (54) [right of=52] {$F_3$};
	\node[minus2] (56) [right of=54] {$F_1^\prime$};
	\node[minus1] (58) [above left of=52] {$F_7$};
	\node[minus1] (58a) [below left of=52] {$F_{11}$};
	\node[minus1] (59) [above right of=56] {$F_7^\prime$};
	\node[minus1] (59a) [below right of=56] {$F_{11}^\prime$};
	\draw (58)--(52)--(54)--(56)--(59);
	\draw (58a)--(52);
	\draw (59a)--(56);
	
	\node[] at (-1,1.5){$\longrightarrow$};
	
	\node[minus2] at (1,1.5)(1){$E_1$};
	\node[minus2,very thick] at (1,0)(2){$E_2$};
	\node[minus2] at (0,1.5)(3){$E_3$};
	\node[minus2,very thick] at (1,3)(4){$E_4$};
	\node[minus2,very thick] at (3,0)(5){$E_5$};
	\node[minus2,very thick] at (4,3)(6){$E_6$};
	\node[minus1] at (2,2)(7){$E_7$};
	\node[minus1] at (3,2)(8){$E_8$};
	\node[minus1] at (2,0)(9){$E_9$};
	\node[minus1] at (2,3)(10){$E_{10}$};
	\node[minus1] at (2,1)(11){$E_{11}$};
	\node[minus1] at (4,1)(12){$E_{12}$};
	\draw (2)--(3)--(4)--(10)--(6)--(12)--(5)--(9)--(2);
	\draw (3)--(1)--(7)--(8)--(6);
	\draw (1)--(11)--(12);
	\draw (8)--(5);
	
	\node[](51) [below=12pt of 9] {$S:$ degree $2$, type $D_4+2A_1$};
	\node[](47) [left=12pt of 51] {$T:$ degree $4$, type $A_3(4l)$};
\end{tikzpicture}
\]
$$ \Cris(S)=\langle a,
b\rangle\cong C_2^2,$$
where
\begin{align*}
	a&=(7, 11)(8, 12),\\
	b&=(2, 4)(5, 6)(9, 10).
\end{align*}

The subgroups $H_T\times H_S\subseteq \Cris(\pi)$ up to conjugacy satisfying $\rho(\tT,H_T)\geq\rho(\tS,H_S)$ are listed in Table \ref{tab-deg2, D4+2A1}.
\begin{center}
	\begin{table}[hbt]
		\caption{Degree $2$, type $D_4+2A_1$}
		\label{tab-deg2, D4+2A1}.
		\begin{tabular}{cccc}
			\toprule
			$H_\tS$&$H_\tT$&$\rho(\tS,H_S)$&$\rho(\tT,H_T)$\\
			\midrule
			%			$C_2^2$&$C_2$&$5$&$4$\\
			$C_2=\langle b\rangle$&$C_1$&$6$&$6$\\
			%			$C_2(2)=\langle(2, 4)(5, 6)(7, 11)(8, 12)(9, 10)\rangle$&$C_2$&$5$&$4$\\
			%			$C_2(3)=\langle(7, 11)(8, 12)\rangle$&$C_2$&$7$&$4$\\
			%			$0$&$0$&$8$&$6$\\
			\bottomrule
		\end{tabular}
	\end{table}
\end{center}

\subsection{Degree $2$, type $3A_2$}
\label{deg2,3A2}
\[
\begin{tikzpicture}[node distance={24pt},
	minus1/.style = {draw, inner sep=2pt},
	minus2/.style = {draw, circle, inner sep=0pt}
	]
	
	\def\unita{90pt}
	\def\out{3}
	\def\unitb{36pt}
	\def\unitc{30pt}
	\def\shift{160pt}
	\node[draw,minus1] (8a) at (90:\unitb) {$F_{9}$};
	\node[draw,minus1] (11a) at (30:\unitb) {$F_{10}$};
	\node[draw,minus1] (13a) at (-30:\unitb) {$F_{9}^{\prime}$};
	\node[draw,minus1] (8b) at (-90:\unitb) {$F_{10}^{\prime}$};
	\node[draw,minus1] (11b) at (-150:\unitb) {$F_{9}^{\prime\prime}$};
	\node[draw,minus1] (13b) at (-210:\unitb) {$F_{10}^{\prime\prime}$};
	\draw (8a)--(11a)--(13a)--(8b)--(11b)--(13b)--(8a);

	\node[](40) [shift={(72pt,0)}]{$\longrightarrow$};
	
	\node[minus1,shift={(\shift,0)}] (9) at (0,0) {$E_9$};
	\node[minus1,shift={(\shift,0)}] (13) at (90:\unitc) {$E_{13}$};
	\node[minus1,shift={(\shift,0)}] (12) at (-30:\unitc) {$E_{12}$};
	\node[minus1,shift={(\shift,0)}] (7) at (-150:\unitc) {$E_{7}$};
	\node[minus1,shift={(\shift,0)}] (14) at (30:\unita) {$E_{14}$};
	\node[minus1,shift={(\shift,0)}] (11) at (-90:\unita) {$E_{11}$};
	\node[minus1,shift={(\shift,0)}] (8) at (-210:\unita) {$E_{8}$};
	\node[minus2, very thick] (1) at ($(8)!1/3!(14)$) {$E_1$};
	\node[minus2, very thick] (2) at ($(8)!2/3!(14)$) {$E_2$};
	\node[minus2, very thick] (3) at ($(8)!1/3!(11)$) {$E_3$};
	\node[minus2, very thick] (4) at ($(8)!2/3!(11)$) {$E_4$};
	\node[minus2, very thick] (5) at ($(11)!1/3!(14)$) {$E_5$};
	\node[minus2, very thick] (6) at ($(11)!2/3!(14)$) {$E_6$};
	\node[minus1,shift={(\shift,0)}] (10) at (-210:56pt) {$E_{10}$};
	
	\draw[double, double distance=2pt] (10)--(9);
	\draw (13)--(9)--(7);
	\draw (9)--(12);
	\draw (8)--(1)--(2)--(14)--(6)--(5)--(11)--(4)--(3)--(8);
	\draw (1)--(7)--(5);
	\draw (2)--(12)--(4);
	\draw (3)--(13)--(6);
	\draw (11)edge[out=180,in=225](10);
	\draw (10)edge[out=75,in=120](14);
	\draw (10)--(8);
	
	\node[](51) [below=12pt of 11] {$S:$ degree $2$, type $3A_2$};
	\node[](47) [left=24pt of 51)] {$T:$ degree $6$, smooth};
\end{tikzpicture}
\]

$$ \Cris(S)\cong\fD_6.$$
%$$ \Cris(S)=\langle a,b,c \rangle\cong\fD_6$$
%where
%\begin{align*}
%	a&=(1, 2)(3, 6)(4, 5)(7, 12)(8, 14),\\
%	b&=(1, 6, 4)(2, 5, 3)(7, 13, 12)(8, 14, 11),\\
%	c&=(3, 5)(4, 6)(7, 8)(9, 10)(11, 13)(12, 14).
%\end{align*}

The subgroups $H_T\times H_S\subseteq \Cris(\pi)$ up to conjugacy satisfying $\rho(\tT,H_T)\geq\rho(\tS,H_S)$ are listed in Table \ref{tab-deg2,3A2}.
\begin{center}
	\begin{table}[hbpt]
		
		\centering
		\caption{Degree $2$, type $3A_2$.}
		\label{tab-deg2,3A2}
		\begin{tabular}{cccc}
			\toprule
			$H_\tS$&$H_\tT$&$\rho(\tS,H_S)$&$\rho(\tT,H_T)$\\
			\midrule
			$\fD_6$&$C_2$&$2$&$2$\\
			$\fS_3(1)$&$C_1$&$\bf 3$&$\bf 6$\\
			%			$\fS_3^\prime=\langle (3, 5)(4, 6)(7, 8)(9, 10)(11, 13)(12, 14),
			%			(1, 6, 4)(2, 5, 3)(7, 13, 12)(8, 14, 11) \rangle$&$C_3$&$3$&$2$\\
			$C_6$&$C_2$&$2$&$2$\\
			%			$C_2^2=\langle (1, 3)(2, 4)(5, 6)(7, 13)(11, 14),
			%			(1, 4)(2, 3)(7, 11)(8, 12)(9, 10)(13, 14)\rangle$&$C_3$&$3$&$2$\\
			$C_3$&$C_1$&$\bf 3$&$\bf 6$\\
			$C_2(1)$&$C_1$&$\bf 5$&$\bf 6$\\
			%			$C_2^\prime=\langle(1, 4)(2, 3)(7, 11)(8, 12)(9, 10)(13, 14)\rangle$&$C_3$&$5$&$2$\\
			%			$C_2^{\prime\prime}=\langle(1, 2)(3, 4)(5, 6)(7, 14)(8, 12)(9, 10)(11, 13)\rangle$&$C_3$&$4$&$2$\\
			%			$0$&$0$&$8$&$6$\\
			\bottomrule
		\end{tabular}
	\end{table}
\end{center}

In Table \ref{tab-deg2,3A2}, we define the subgroups by
\begin{align*}
	&\fS_3(1)=\langle(1, 3)(2, 4)(5, 6)(7, 13)(11, 14),
	(1, 6, 4)(2, 5, 3)(7, 13, 12)(8, 14, 11)\rangle,\\
	%	&C_6=\langle (1, 2)(3, 4)(5, 6)(7, 14)(8, 12)(9, 10)(11, 13),
	%	(1, 6, 4)(2, 5, 3)(7, 13, 12)(8, 14, 11) \rangle,\\
	%	&C_3=\langle  (1, 6, 4)(2, 5, 3)(7, 13, 12)(8, 14, 11)\rangle\\
	&C_2(1)=\langle(1, 3)(2, 4)(5, 6)(7, 13)(11, 14)\rangle.
\end{align*}

\subsection{Degree $2$, type $A_3+3A_1$}
\label{deg2,A3+3A1}

\[
\begin{tikzpicture}[node distance={24pt},
	minus1/.style = {draw, inner sep=2pt},
	minus2/.style = {draw, circle, inner sep=0pt}
	]
	\node[minus2] (4a) at (-6,-2) {$F_4$}; 
	\node[minus1] (13a) [right of=4a] {$F_{13}$}; 
	\node[minus2] (2a) [right of=13a] {$F_{2}$}; 
	\node[minus1] (13b) [right of=2a] {$F_{13}^\prime$}; 
	\node[minus2] (4b) [right of=13b]{$F_4^\prime$};
	\node[minus1] (7a) [above right=12pt and 12pt of 4a] {$F_7$}; 
	\node[minus1] (7b) [below right=12pt and 12pt of 4a]{$F_9^\prime$};
	\node[minus1] (9b) [above left=12pt and 12pt of 4b] {$F_9$}; 
	\node[minus1] (9a) [below left=12pt and 12pt of 4b]{$F_7^\prime$};
	\draw (4a)--(13a)--(2a)--(13b)--(4b);
	\draw (4a)--(7a)--(9b)--(4b)--(9a)--(7b)--(4a);
	
	\node[](40) [right=12pt of 4b]{$\longrightarrow$};
	
	\node[minus2, very thick] at (0,0)(1){$E_1$};
	\node[minus2] at (0,-2)(2){$E_2$};
	\node[minus2, very thick] at (0,-4)(3){$E_3$};
	\node[minus1] at (1.2,-2)(13){$E_{13}$};
	\node[minus1] at (0.8,-3)(10){$E_{10}$};
	\node[minus1] at (1.5,-4)(12){$E_{12}$};
	\node[minus1] at (2,0)(8){$E_{8}$};
	\node[minus1] at (2,-1)(7){$E_{7}$};
	\node[minus1] at (2.2,-2.8)(9){$E_{9}$};
	\node[minus2] at (2.8,-2)(4){$E_{4}$};
	\node[minus1] at (3.2,-3)(14){$E_{14}$};
	\node[minus2, very thick] at (4,0)(6){$E_6$};
	\node[minus1] at (4,-2)(11){$E_{11}$};
	\node[minus2, very thick] at (4,-4)(5){$E_5$};
	\draw (1)--(2)--(3)--(12)--(5)--(11)--(6)--(8)--(1);
	\draw (1)--(10)--(5);
	\draw (3)--(14)--(6);
	\draw (10)--(9)--(14);
	\draw (8)--(7)--(4)--(9)--(7);
	\draw (2)--(13)--(4)--(11);
	\draw (12)--(7);
	\node[](51) [below=36pt of 9] {$S:$ degree $2$, type $A_3+3A_1$};
	\node[](47) [left=40pt of 51] {$T:$ degree $4$, type $3A_1$};
\end{tikzpicture}
\]
$$ \Cris(S)=\langle a,b\rangle\cong C_2^2,$$
where
\begin{align*}
	a&=(1, 3)(7, 9)(8, 14)(10, 12),\\
	b&=(5, 6)(7, 9)(8, 10)(12, 14).
\end{align*}
The subgroups $H_T\times H_S\subseteq \Cris(\pi)$ up to conjugacy satisfying $\rho(\tT,H_T)\geq\rho(\tS,H_S)$ are listed in Table \ref{tab-deg2,A3+3A1}.
\begin{center}
	\begin{table}[h]
		\caption{Degree $2$, type $A_3+3A_1$.}
		\label{tab-deg2,A3+3A1}
		\begin{tabular}{cccc}
			\toprule
			$H_\tS$&$H_\tT$&$\rho(\tS,H_S)$&$\rho(\tT,H_T)$\\
			\midrule
			$C_2^2$&$C_2=\langle(7,9^\prime)(7^\prime,9)\rangle$&$5$&$5$\\
			%			$C_2=\langle(1, 3)(7, 9)(8, 14)(10, 12)\rangle$&$C_2$&$6$&$4$\\
			$C_2=\langle ab\rangle$&$C_1$&$6$&$6$\\
			%			$C_2=\langle(5, 6)(7, 9)(8, 10)(12, 14)\rangle$&$C_2$& $6$& $4$\\
			%			$0$&$0$&$8$&$6$\\
			\bottomrule
		\end{tabular}
	\end{table}
\end{center}

\subsection{Degree $2$, type $6A_1$}
\label{deg2,6A1}
\[
\begin{tikzpicture}[node distance={24pt},
	minus1/.style = {draw, inner sep=2pt},
	minus2/.style = {draw, circle, inner sep=0pt}
	]
	\def\unit{39pt}
	\def\out{2}
	
	\node[] at (-2.5,0)(41) {$\bP^1\times\bP^1$};
	
	\node[] at (-1.1,0)(40){$\longrightarrow$};
	
	\node[minus2] at (0,-1)(5b){$F_{5}^\prime$};
	\node[minus2] at (0,1)(3a){$F_{3}$};
	\node[minus2] at (2,-1)(3b){$F_{3}^\prime$};
	\node[minus2] at (2,1)(5a){$F_{5}$};
	\node[minus1] at (0,0)(16b){$F_{16}^\prime$};
	\node[minus1] at (1,-1)(8b){$F_{8}^\prime$};
	\node[minus1] at (2,0)(16a){$F_{16}$};
	\node[minus1] at (1,1)(8a){$F_{8}$};
	\draw (3a)--(16b)--(5b)--(8b)--(3b)--(16a)--(5a)--(8a)--(3a);

	\node[] at (3,0)(40){$\longrightarrow$};
	
	\def\shift{210pt}
	\node[draw,minus1,shift={(\shift,0)}] (10) at (0:0) {$E_{10}$};
	\node[draw,minus1,shift={(\shift,0)}] (9) at (-210:\unit) {$E_9$};
	\node[draw,minus1,shift={(\shift,0)}] (16) at (-90:\unit) {$E_{16}$};
	\node[draw,minus1,shift={(\shift,0)}] (7) at (30:\unit) {$E_{7}$};
	
	\node[draw,very thick,minus2,shift={(\shift,0)}] (1) at (-240:\unit*\out) {$ E_1$};
	\node[draw,very thick,minus2,shift={(\shift,0)}] (2) at (-180:\unit*\out) {$E_2$};
	\node[draw,minus2,shift={(\shift,0)}] (3) at (-120:\unit*\out) {$E_{3}$};
	\node[draw,minus2,shift={(\shift,0)}] (5) at (-60:\unit*\out) {$E_{5}$};
	\node[draw,very thick,minus2,shift={(\shift,0)}] (6) at (0:\unit*\out) {$E_{6}$};
	\node[draw,very thick,minus2,shift={(\shift,0)}] (4) at (60:\unit*\out) {$E_{4}$};
	
	\node[minus1] (12) at ($(1)!0.5!(2)$) {$E_{12}$};
	\node[minus1] (13) at ($(2)!0.5!(3)$) {$E_{13}$};
	\node[minus1] (8) at ($(3)!0.5!(5)$) {$E_{8}$};
	\node[minus1] (14) at ($(5)!0.5!(6)$) {$E_{14}$};
	\node[minus1] (15) at ($(6)!0.5!(4)$) {$E_{15}$};
	\node[minus1] (11) at ($(4)!0.5!(1)$) {$E_{11}$};
	
	\draw (1)--(12)--(2)--(13)--(3)--(8)--(5)--(14)--(6)--(15)--(4)--(11)--(1);
	\draw (10)--(1);\draw (10)--(3);\draw (10)--(6);
	\draw (2)--(9)--(1);\draw (8)--(9)--(15);
	\draw (12)--(7)--(4);\draw (8)--(7)--(6);
	\draw (12)--(16)--(3);\draw (5)--(16)--(15);
	\draw (2)--(14);\draw (5)--(11);\draw (4)--(13);

	\node[shift={(\shift,0)}] (52) at (-90:\unit*\out+12pt) {$S:$ degree $2$, type $6A_1$};
	\node(53)[left=60pt of 52]{$T:$ degree $4$, type $4A_1$};

\end{tikzpicture}
\]

$$ \Cris(S)\cong C_2\times\fS_4.$$

The subgroups $H_T\times H_S\subseteq \Cris(\pi)$ up to conjugacy satisfying $\rho(\tT,H_T)\geq\rho(\tS,H_S)$ are listed in Table \ref{tab-deg2, 6A1}.

\begin{center}
	\begin{table}[hbt]
		\caption{Degree $2$, type $6A_1$}
		\label{tab-deg2, 6A1}
		\begin{tabular}{cccc}
			\toprule
			$H_\tS$&$H_\tT$&$\rho(\tS,H_S)$&$\rho(\tT,H_T)$\\
			\midrule
			%			$C_2\times\fD_4$&$(4,4)$&$3$&$2$\\
			%			$C_2\times C_4$&$(4,4)$&$3$&$2$\\
			%			$\fD_4$&$(4,4)$&$3$&$2$\\
			$\fD_4$&$C_2$&$4$&$4$\\
			%			$C_2^3$&$(4,4)$&$4$&$2$\\
			$C_2^3$&$C_2$&$4$&$4$\\
			$C_4$&$C_2$&$4$&$4$\\
			%			$C_4$&$(4,4)$&$3$&$2$\\
			$C_2^2(1)$&$C_1$&$\bf 5$&$\bf 6$\\
			%			$C_2^2$&$(2,2,2,2)$&$5$&$4$\\
			%			$C_2^2$&$(4,4)$&$5$&$2$\\
			$C_2^2(2)$&$C_2$&$4$&$4$\\
			$C_2^2(3)$&$C_2$&$4$&$4$\\
			%			$C_2$&$(2,2,2,2)$&$7$&$4$\\
			$C_2$&$C_1$&$6$&$6$\\
			%			$C_2$&$(2,2,2,2)$&$6$&$4$\\
			%			$C_2$&$(2,2,2,2)$&$5$&$4$\\
			%			$C_2$&$(4,4)$&$5$&$2$\\
			%			$0$&$0$&$8$&$6$\\
			\bottomrule
		\end{tabular}
	\end{table}
\end{center}
%In Table \ref{tab-deg2, 6A1}, we have
%$$C_2^2(1)=\langle(1, 2)(4, 6)(10, 13)(11, 14),
%		(1, 6)(2, 4)(7, 9)(11, 14)(12, 15)
%		\rangle.$$
In Table \ref{tab-deg2, 6A1}, we define the subgroups by
\begin{small}
	\begin{align*}
		%		&\fD_4=\langle(1, 2)(3, 5)(10, 14)(11, 13),
		%		(1, 6)(2, 4)(7, 9)(11, 14)(12, 15),
		%		(3, 5)(4, 6)(10, 11)(13, 14)\rangle\\
		%		&C_2^3=\langle(1, 2)(4, 6)(10, 13)(11, 14),
		%		(1, 6)(2, 4)(7, 9)(11, 14)(12, 15),
		%		(7, 15)(8, 16)(9, 12)
		%		\rangle\\
		%		&C_4=\langle (1, 6, 2, 4)(3, 5)(7, 9)(10, 14, 13, 11)(12, 15)\rangle\\
		&C_2^2(1)=\langle(1, 2)(4, 6)(10, 13)(11, 14),
		(1, 6)(2, 4)(7, 9)(11, 14)(12, 15)
		\rangle.\\
		&C_2^2(2)=\langle(1, 6)(2, 4)(7, 12)(8, 16)(9, 15)(11, 14),
		(1, 2)(4, 6)(10, 13)(11, 14)
		\rangle,\\
		&C_2^2(3)=\langle(1, 2)(4, 6)(7, 15)(8, 16)(9, 12)(10, 13)(11, 14),
		(1, 6)(2, 4)(7, 9)(11, 14)(12, 15)
		\rangle.
		%		&C_2=\langle(1, 2)(4, 6)(10, 13)(11, 14), (1, 6)(2, 4)(7, 9)(11, 14)(12, 15)\rangle
	\end{align*}
\end{small}

\subsection{Degree $1$, type $E_6+A_2$}
\label{sec:deg1,E6+A2}
\[
\begin{tikzpicture}[node distance={24pt},
	minus1/.style = {draw, inner sep=2pt},
	minus2/.style = {draw, circle, inner sep=0pt}
	]
	\node[minus1] (12a)  {$F_{12}$}; 
	\node[minus1] (12b) [below of=12a] {$F_{12}^\prime$}; 
	\node[minus1] (12c) [below of=12b] {$F_{12}^{\prime\prime}$};
	\node[minus2] (1a) [right of=12a] {$F_{1}$}; 
	\node[minus2] (1b) [right of=12b] {$F_{1}^\prime$}; 
	\node[minus2] (1c) [right of=12c] {$F_{1}^{\prime\prime}$}; 
	\node[minus2] (4a) [right of=1b] {$F_{4}$}; 
	\node[minus1] (11b) [left of=12b] {$F_{a}^\prime$}; 
	\node[minus1] (11a) [left=24pt of 12a] {$F_{a}$}; 
	\node[minus1] (11c) [left=24pt of 12c] {$F_{a}^{\prime\prime}$}; 
	\draw (11a)--(12a)--(1a)--(4a);
	\draw (11b)--(12b)--(1b)--(4a);
	\draw (11c)--(12c)--(1c)--(4a);
	\draw (11a)--(11b)--(11c)--(11a);
	
	\node[](40) [right=12pt of 4a]{$\longrightarrow$};
	
	\node[minus1] (11) [right=24pt of 40] {$E_{11}$}; 
	\node[minus1] (10) [above of=11] {$E_{10}$}; 
	\node[minus1] (9) [below of=11] {$E_{9}$}; 
	\node[minus2,very thick] (7) [left of=10] {$E_{7}$}; 
	\node[minus2,very thick] (8) [left of=9] {$E_{8}$}; 
	\node[minus2,very thick] (6) [right of=10] {$E_{6}$}; 
	\node[minus1] (12) [right of=11] {$E_{12}$}; 
	\node[minus2,very thick] (2) [right of=9] {$E_{2}$}; 
	\node[minus2,very thick] (5) [right of=6] {$E_{5}$}; 
	\node[minus2] (1) [right of=12] {$E_{1}$}; 
	\node[minus2,very thick] (3) [right of=2] {$E_{3}$}; 
	\node[minus2] (4) [right of=1] {$E_{4}$}; 
	\draw (7)--(10)--(6)--(5)--(4)--(3)--(2)--(9)--(8)--(7);
	\draw (7)--(11)--(8);
	\draw (11)--(12)--(1)--(4);
	
	\node[](47) [below=12pt of 12c] {$T:$ degree $3$, type $D_4$};
	\node[](51) [base right=of 47] {$S:$ degree $1$, type $E_6+A_2$};
\end{tikzpicture}
\]
In the above diagram, we write $F_a, F_a^\prime$, and $F_a^{\prime\prime}$, as they may map to $E_{11}$ or non-extremal curves, depending on the isomorphic classes of $T$ and $S$.
There are precisely two isomorphic classes of Du Val del Pezzo surfaces of type $S_3(D_4)$ and $S_1(E_6+A_2)$, respectively. This phenomenon becomes particularly clear from the perspective of quasi-\'etale covers (see also Remark \ref{rema:counterexample}):

\begin{enumerate}
	\item If $E_7 \cap E_8 \cap E_{11} \neq \emptyset$, then they intersect at a single point, and similarly, $F_a$, $F_a^\prime$, and $F_a^{\prime\prime}$ also intersect at a single point. In this case, each of $F_a$, $F_a^\prime$, or $F_a^{\prime\prime}$ maps to $E_{11}$. Therefore, we may write $a = 11$ in this case.
	
	\item If $E_7 \cap E_8 \cap E_{11} = \emptyset$, then $F_a\cap F_a^\prime\cap F_a^{\prime\prime}=\emptyset$ as well.
	In this case, $F_a$, $F_a^\prime$, $F_a^{\prime\prime}$, and $E_{11}$ are not in correspondence: each of $F_a$, $F_a^\prime$, or $F_a^{\prime\prime}$ maps to a same nodal rational curve in the linear system $\lvert -K_X \rvert$, and the preimage of $E_{11}$ is a nodal rational curve in $Y$ with a self-intersection number of $3$.
\end{enumerate}

We have that
$$ \Cris(S)\cong C_2.$$
In both cases, the subgroups $H_T\times H_S\subseteq \Cris(\pi)$ up to conjugacy satisfying $\rho(\tT,H_T)\geq\rho(\tS,H_S)$ are listed in Table \ref{tab-deg1,E6+A2}.
\begin{center}
	\begin{table}[hbt]
		\caption{Degree $1$, type $E_6+A_2$.}
		\label{tab-deg1,E6+A2}
		\begin{tabular}{cccc}
			\toprule
			$H_\tS$&$H_\tT$&$\rho(\tS,H_S)$&$\rho(\tT,H_T)$\\
			\midrule
			$C_2$&$C_1$&$\bf 6$&$\bf 7$\\
			\bottomrule
		\end{tabular}
	\end{table}
\end{center}

%It would be interesting to consider to what extent the above proof is valid in general.

%$$X_\text{prim}(\mathbb{Q}) = \bigsqcup_{L\in G\text{-Field}(\bQ)} Y(L)^G/\mathrm{Aut}_{\mathbb{Q}}(L) $$

%\nocite{*}
\normalem%用于解决\usepackage{ulem}的bug
\bibliographystyle{alpha}
%\bibliography{weakdp}
\bibliography{myref}

\end{document}